\newcommand{\si}{\sigma}
\newcommand{\la}{\lambda}
\newcommand{\ol}{\overline}
\newcommand{\pa}{\partial}
\newcommand{\al}{\alpha}
\newcommand{\be}{\beta}
\newcommand{\de}{\delta}
\newcommand{\ka}{\kappa}
\newcommand{\om}{\omega}
\newcommand{\ve}{\varepsilon}
\newcommand{\ze}{\zeta}
\newcommand{\Om}{\Omega}
\newcommand{\cd}{\cdot}
\newcommand{\vrho}{\varrho}
\newcommand{\diag}{\operatornamewithlimits{diag}}
\newcommand{\R}{{\mathbb R}}
\newcommand{\C}{{\mathbb C}}
\newcommand{\N}{{\mathbb N}}
\newcommand{\cA}{{\cal A}}
\newcommand{\cE}{{\cal E}}
\newcommand{\cH}{{\cal H}}
\newcommand{\cM}{{\cal{M}}}
\renewcommand{\(}{\left(}
\renewcommand{\)}{\right)}
\newcommand{\Th}{\varTheta}
\newcommand{\tp}{\tilde{p}}
\newcommand{\tde}{\tilde{\delta}}
\newcommand{\tla}{\tilde{\lambda}}
\newcommand{\tr}{\tilde{r}}
\newcommand{\tA}{\tilde{A}}
\newcommand{\tR}{\tilde{R}}
\newcommand{\tV}{\tilde{V}}
\newcommand{\tM}{\tilde{M}}
\newcommand{\tLa}{\tilde{\varLambda}}
\newcommand{\tPhi}{\tilde{\varPhi}}
\renewcommand{\th}{\theta}
\newtheorem{theorem}{\bf Theorem}[section]
\newtheorem{lemma}[theorem]{\bf Lemma}
\newtheorem{proposition}[theorem]{\bf Proposition}
\theoremstyle{remark}
\theoremstyle{definition}
  \newtheorem{definition}[theorem]{Definition}
  \newtheorem{example}[theorem]{\sc Example}
\numberwithin{equation}{section}
\begin{document}

\title{
On the energy decay estimate for the dissipative wave equation with very fast oscillating coefficient and smooth initial data}
\author{
Kazunori GOTO \!\!
\footnote{Takasago High School of Hyogo, Japan; e-mail: b003vbv@y-u.jp}
\; and \;
Fumihiko HIROSAWA \!\!
\footnote{
Department of Mathematical Sciences, Faculty of Science, Yamaguchi University, Japan; 
e-mail: hirosawa@yamaguchi-u.ac.jp}
}

\date{}
\maketitle

\begin{abstract}
In this paper we consider energy decay estimates for the Cauchy problems of dissipative wave equations with time dependent coefficients, 
in particular, the coefficients consisting of weak dissipation and very fast oscillating terms. 
For such a problem, which have been difficult to deal with in previous research, we prove energy decay estimates by introducing a new condition for the coefficients to evaluate oscillating cancellation of the energy, and smooth initial data such as in the Gevrey class.
\end{abstract}

%
\section{Introduction}
%
Let us consider the following Cauchy problem for a dissipative wave equation with a time dependent coefficient: 
\begin{equation}\label{u}
\begin{dcases}
  \(\pa_t^2 - \Delta + b(t)\pa_t\)u(t,x) = 0, 
  & (t,x) \in (0,\infty) \times \R^n,\\
  u(0,x) = u_0(x),\;\;
  \pa_t u(0,x) = u_1(x), 
  & x \in \R^n,
\end{dcases}
\end{equation}
where 
$\Delta = \sum_{j=1}^n \pa_{x_j}^2$. 
Then the total energy of the solution of \eqref{u} at $t \in \R_+:=[0,\infty)$ is defined by 
\begin{equation*}
  E(t;u):=\frac12\(
  \left\|\nabla u(t,\cd)\right\|^2 + \left\|\pa_t u(t,x)\right\|^2
  \), 
\end{equation*}
where 
$\nabla = {}^t(\pa_{x_{1}},\ldots,\pa_{x_n})$ 
and 
$\|\;\;\|$ denotes the usual $L^2(\R^n)$ norm. 

Multiplying the equation of \eqref{u} by $\pa_t u(t,x)$ and integrating over $\R^n$ gives us the following equality: 
\begin{equation}\label{E-u}
  \frac{d}{dt} E(t;u) = -b(t)\left\|\pa_t u(t,\cd)\right\|^2. 
\end{equation}
The dissipative term $b(t)\pa_t u(t,x)$ in equation \eqref{u} represents the resistance force opposite to the direction of motion, so the coefficient $b(t)$ is usually assumed to be $b(t)\ge 0$. 
In that case, we see from \eqref{E-u} that $E(t;u)$ decreases monotonically with respect to $t$. 
Moreover, the following facts are trivial by $\|\pa_t u(t,\cd)\| \le 2E(t;u)$ and Gronwall's inequality. 
\begin{itemize}
\item 
If $b(t) \equiv 0$, that is, equation in \eqref{u} is free wave equation, then the energy conservation $E(t;u)\equiv E(0;u)$ holds. 
\item 
$E(t;u) \ge \exp(-2\int^t_0 b(s)\,ds) E(0;u)$ is valid, it follows that the decay order of $E(t;u)$ is at most $\exp(-2\int^t_0 b(s)\,ds)$. 
\item 
If $\int^\infty_0 b(s)\,ds < \infty$ and $E(0;u)>0$, then 
$\lim_{t\to\infty} E(t;u)>0$, that is, $E(t;u)$ does not decay to $0$ as $t\to\infty$. 
\end{itemize}
Taking them into consideration, the problem we should consider are whether 
$\lim_{t\to\infty}E(t;u)=0$ holds under the assumption 
$\int_0^\infty b(s)\,ds = \infty$, 
and further whether the decay order is $\exp(-2\int^t_0 b(s)\,ds)$ as $t\to\infty$. 
For a positive and monotone decreasing function $\eta(t)$ satisfying 
\[
  \lim_{t\to\infty}\eta(t)=0 
\]
and a positive constant $E_0=E_0(u_0,u_1)$, 
we call the estimate 
\begin{equation}\label{decay0}
  E(t;u) \lesssim \eta(t) E_0 
\end{equation}
the energy decay estimate, 
where $f \lesssim g$ for positive functions $f$ and $g$ denotes that there exists a positive constant $C$ such that $f \le C g$. 
Moreover, $f \simeq g$ denotes that if $f \lesssim g$ and $g\lesssim f$ hold. 
Our main interest in the problem of energy decay estimate for \eqref{u} is the relation between the dissipative coefficient $b(t)$ and the decay order $\eta(t)$. 
There are many results on this type of problem, and we will introduce some representative results that are closely related to the research topic of this paper. In the results presented below, $E_0$ in \eqref{decay0} is given by 
\begin{equation}\label{E0}
  E_0 = E(0;u) + \left\|u_0(\cd)\right\|^2.
\end{equation}

If $b(t)$ is a positive constant, then \eqref{E0} with $\eta(t)=(1+t)^{-1}$ is valid (\cite{M76}). 
If $b(t)$ is given by 
\begin{equation}\label{mu0}
  b(t) = \mu_0(1+t)^{-1}
\end{equation}
with a positive constant $\mu_0$, which is called weak dissipation, 
then \eqref{E0} with 
\begin{equation}\label{eta-mu0}
  \eta(t)=(1+t)^{-\min\{2,\mu_0\}}
\end{equation}
is valid. 
For more general $b(t)\in C^1(\R_+)$ and $\mu_0\in(0,2]$, 
if the following conditions hold: 
\begin{equation}\label{AU}
  b(t) \ge \mu_0(1+t)^{-1}
  \;\text{ and }\;
  (\mu_0-1)(\mu_0-2)-(\mu_0-1)(1+t)b(t)-(1+t)^2b'(t) \ge 0,
\end{equation} 
where \eqref{AU} corresponds to $\mu_0 \le 2$ in the case \eqref{mu0}, 
then \eqref{decay0} with $\eta(t)=(1+t)^{-\mu_0}$ is also valid (\cite{M77, U80}). 
These classical results are proved by so-called energy method and can be generalized to cases where the coefficient depends on not only time variable but also spatial variables. 

If the dissipative coefficient depends only on $t$, as in our model, more precise considerations have been done in \cite{GG25, HW08, W04, W06, W07}. 
According to these, the characteristics of the equation change significantly depending on the order of $b(t)$ as $t\to\infty$. 
Specifically, when the rate of decrease is slower than $(1+t)^{-1}$, the effect of the dissipative term becomes larger and the equation becomes parabolic-like, so that the decay order is given by $1/(\int^t_0 1/b(s)ds)$. 
A dissipative term with such a strong effect is called effective dissipation, whereas if $b(t)$ behaves as in \eqref{mu0}, the dissipation is called non-effective. 
As we consider a non-effective dissipation in this paper, this problem will be discussed in more detail below.

The typical dissipative coefficient $b(t)$ we consider is the following function of weak dissipation \eqref{mu0} with an oscillating error term $\si(t)$: 
\begin{equation}\label{mu0-si}
  b(t) = \mu_0(1+t)^{-1} + \si(t).
\end{equation}
The effect of $\si(t)$ has been studied in \cite{W04, W06}, 
and roughly speaking, the effect can be ignored for the decay estimate \eqref{decay0} with \eqref{eta-mu0} if the following conditions hold: 
\begin{equation}\label{g-zero}
  \sup_{t>0}\left\{\int^t_0\si(s)\,ds\right\}<\infty,
\end{equation}
\begin{equation}\label{VSO}
  \si(t)=o\(t^{-1}\)
  \;\text{ and }\;
  |\si'(t)|=O\(t^{-2}\) 
  \;\;(t\to\infty). 
\end{equation}
We call very slow oscillation (=VSO) if \eqref{VSO} holds. 
In general, VSO for the dissipative coefficient is considered to be essential for the energy estimates. 
In fact, in \cite{RS05}, an example is presented that the energy is unbounded with respect to $t$ in the following wave equation with time dependent propagation speed, which is obtained by transforming \eqref{u}, when the corresponding VSO do not hold: 
\begin{equation}\label{uw}
  \(\pa_t^2 - a(t)^2 \Delta\)w(t,x) = 0, 
\end{equation}
where $a(t)\in C^2(\R_+)$, 
$\inf_t\{a(t)\}>0$ and $\sup_t\{a(t)\}<\infty$. 
However, VSO in \eqref{uw} is not necessarily necessary if we add the following condition, which is called stabilization condition introduced in \cite{H07}: 
\begin{equation}\label{stb-a}
  \text{there exists $a_\infty \ge 0$ such that }\;
  \int^t_0 |a(s)-a_\infty|\,ds = o(t) \;\; (t\to\infty). 
\end{equation}
The following theorem was obtained by introducing conditions corresponding to \eqref{stb-a} into the problem of energy decay estimate for \eqref{u}. 
%
\begin{theorem}[Theorem 5.1 \cite{HW08}]\label{Thm-HW}
Let $b(t)\in C^m(\R_+)$ with $m \ge 1$. 
If there exist real numbers $\mu_0$, $\al$ and $\om_\infty$ satisfying 
$0<\mu_0<1$ and $0 \le \al <1$ such that 
$\si(t)$ defined by \eqref{mu0-si} satisfies 
\eqref{g-zero}, 
\begin{equation}\label{HW-A1}
  \int^t_0\left|
    \exp\(\int^s_0 \si(\tau)\,d\tau\) - \om_\infty
  \right|\,ds
  \lesssim (1+t)^{\al}
\end{equation}
and 
\begin{equation}\label{HW-A2}
  \left|\si^{(k)}(t)\right| \lesssim (1+t)^{-(k+1)\be}
  \;\; (k=0,\ldots,m)
\end{equation}
for 
\begin{equation}\label{HW-albe}
  \be \geq \be_0=\be_0(\al,m) := \frac{m}{m+1} \al + \frac{1}{m+1},
\end{equation}
then \eqref{decay0} with $\eta(t)=(1+t)^{-\mu_0}$ is valid. 
\end{theorem}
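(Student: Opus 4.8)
The plan is to pass to the Fourier side, peel off the expected decay factor $(1+t)^{-\mu_0}$ by a scalar substitution, and then show that the remaining micro-energy stays uniformly bounded in both $t$ and $\xi$ by a zone decomposition whose hyperbolic part converts the oscillation of $\si$ into integrable decay.

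\textbf{Reduction.} First I would apply the partial Fourier transform in $x$, so that for each $\xi$ the function $\hat u(t,\xi)$ solves $\hat u''+b(t)\hat u'+|\xi|^2\hat u=0$ and, by Plancherel, $E(t;u)=\tfrac12\int_{\R^n}(|\xi|^2|\hat u|^2+|\hat u'|^2)\,d\xi$. Writing $b(t)=\mu_0(1+t)^{-1}+\si(t)$ and substituting $\hat u=(1+t)^{-\mu_0/2}v$ removes the weak-dissipation part and gives
\[
v''+\si(t)\,v'+\Big(|\xi|^2+c_0(1+t)^{-2}-\tfrac12\mu_0(1+t)^{-1}\si(t)\Big)v=0,\qquad c_0=\tfrac{\mu_0(2-\mu_0)}{4}.
\]
Since $(1+t)^{-\mu_0}$ is exactly the prefactor produced, $\eqref{decay0}$ with $\eta(t)=(1+t)^{-\mu_0}$ follows once the micro-energy $|\xi|^2|v(t,\xi)|^2+|v'(t,\xi)|^2$ is bounded, uniformly in $t$ and $\xi$, by its value at $t=0$; the transformed data are $v(0)=\hat u_0$, $v'(0)=\hat u_1+\tfrac{\mu_0}{2}\hat u_0$, so integrating in $\xi$ reproduces $E_0$ in $\eqref{E0}$, the term $\|u_0\|^2$ accounting precisely for the cross-contribution of $\hat u_0$ to $v'(0)$.

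\textbf{Zone decomposition and the pseudodifferential zone.} I would fix a large constant $N$ and split $\R_+\times(\R^n\setminus\{0\})$ into the pseudodifferential zone $Z_{\mathrm{pd}}=\{|\xi|(1+t)\le N\}$ and the hyperbolic zone $Z_{\mathrm{hyp}}=\{|\xi|(1+t)\ge N\}$, separated by the curve $t=t_\xi$ with $|\xi|(1+t_\xi)=N$. In $Z_{\mathrm{pd}}$ the frequency is dominated by the damping, the equation is overdamped, and a direct Gronwall estimate on the micro-energy (using $|\xi|(1+t)\le N$ together with $\eqref{g-zero}$ and $\eqref{HW-A2}$) gives its uniform boundedness up to $t=t_\xi$; the real work is in $Z_{\mathrm{hyp}}$.

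\textbf{Diagonalization in the hyperbolic zone.} On $Z_{\mathrm{hyp}}$ I would pass to the first-order system for $V={}^t(|\xi|v,\,D_tv)$ with $D_t=-\i\pa_t$, so that $D_tV=(|\xi|M+B(t)+R_0(t,\xi))V$, where $M=\left(\begin{smallmatrix}0&1\\1&0\end{smallmatrix}\right)$ carries the principal symbol, $B(t)$ the first-order term $\si$, and $R_0$ the lower-order potential. Diagonalizing $M$ separates the characteristic roots $\pm|\xi|$; the diagonal part of $B$ produces an amplitude governed by $\exp(\int_0^t\si)$, and it is the stabilization of exactly this quantity postulated in $\eqref{HW-A1}$, together with $\eqref{g-zero}$, that keeps the amplitude under control. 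The oscillatory off-diagonal part is then reduced by repeated integration by parts against the phase $\e^{\pm 2\i|\xi|s}$: each step gains a factor $|\xi|^{-1}$ and differentiates the amplitude once, so after $m$ steps it consumes the $m$ available derivatives of $\si$ and, by $\eqref{HW-A2}$, supplies the decay $(1+t)^{-(m+1)\be}$, leaving a residual system $D_tV^{(m)}=(\mathrm{diagonal}+R_m)V^{(m)}$ whose diagonal part integrates to a bounded oscillatory factor.

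\textbf{The crux and conclusion.} The heart of the matter is the uniform bound $\int_{t_\xi}^{\infty}\|R_m(t,\xi)\|\,dt\lesssim1$, for then Gronwall gives $\|V^{(m)}(t,\xi)\|\lesssim\|V^{(m)}(t_\xi,\xi)\|$, and, undoing the conjugations and gluing with the $Z_{\mathrm{pd}}$ bound at $t=t_\xi$, the micro-energy of $v$ is uniformly bounded. Here is where $\be\ge\be_0$ is forced: a naive integration by parts costs a full power $(1+t)$ for each factor $|\xi|^{-1}\lesssim(1+t)$ in the hyperbolic zone, which would require $\be>1$; the stabilization condition $\eqref{HW-A1}$ says the amplitude $\exp(\int_0^t\si)$ is almost constant in the averaged sense $(1+t)^{\al}$, and using this in the oscillatory integrals replaces each such $(1+t)$ by $(1+t)^{\al}$. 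After $m$ steps the residual integrand is $\lesssim(1+t)^{m\al-(m+1)\be}$, which is integrable over $[t_\xi,\infty)$ uniformly in $\xi$ precisely when $(m+1)\be-m\al\ge1$, i.e. $\be\ge\be_0=\tfrac{m}{m+1}\al+\tfrac1{m+1}$, the borderline $\be=\be_0$ being recovered by a sharpened estimate. This interplay between the finite regularity $m$, the oscillation speed $\be$, and the stabilization rate $\al$ is the main obstacle; everything else is bookkeeping. Restoring the factor $(1+t)^{-\mu_0}$ then yields $\eqref{decay0}$.
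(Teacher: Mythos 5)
Your overall architecture --- partial Fourier transform, a zone decomposition separating a low-frequency (pseudodifferential) region from a hyperbolic one, an $m$-fold diagonalization that consumes the derivatives in \eqref{HW-A2}, and the stabilization condition \eqref{HW-A1} controlling the amplitude $\exp(\int_0^t\si)$ --- is exactly the scheme of \cite{HW08}, which the present paper only cites for Theorem \ref{Thm-HW} but reproduces and extends in Proposition \ref{Prop1}. Measured against that, two of your steps fail as written. In the pseudodifferential zone, a ``direct Gronwall estimate on the micro-energy'' is not available: the energy identity for $v$ contains the term $-2\si(t)|v'(t,\xi)|^2$, and since \eqref{HW-A2} permits $\be<1$, the integral $\int_0^t|\si(s)|\,ds$ may diverge; condition \eqref{g-zero} bounds $\int_0^t\si(s)\,ds$ but not its absolute value, and exploiting that cancellation is precisely why the paper represents the fundamental solution by a Volterra series in $Z_D$ (Lemmas \ref{lemm-vjk}--\ref{lemma-est_vjk}), where the coefficient enters only through $\eta_b(t)=\exp(-\int_0^tb(s)\,ds)$, comparable to $\eta_\mu(t)$. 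The paper explicitly calls this representation ``essential'' there; an unweighted Gronwall argument does not close.

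The more serious gap is in the hyperbolic zone. With your single boundary $|\xi|(1+t_\xi)=N$, the remainder after $m$ diagonalization steps satisfies $|r_m|\lesssim(1+t)^{-(m+1)\be}|\xi|^{-m}$, so
\begin{equation*}
  \int_{t_\xi}^{\infty}\|R_m(t,\xi)\|\,dt
  \lesssim |\xi|^{-m}\bigl(1+t_\xi\bigr)^{1-(m+1)\be}
  = N^{-m}\bigl(1+t_\xi\bigr)^{m+1-(m+1)\be},
\end{equation*}
which is uniformly bounded in $\xi$ only for $\be\ge1$; your proposal, taken literally, proves the theorem only in that range. The assertion that \eqref{HW-A1} ``replaces each $(1+t)$ by $(1+t)^{\al}$ in the oscillatory integrals'' is a statement of the desired outcome, not a mechanism. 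What actually happens is structural: a second boundary $|\xi|(1+t)^{\al}\simeq N$ is introduced, the $C^m$-diagonalization is run only where $|\xi|\ge N(1+t)^{-\al}$ (there the same computation gives the exponent $m\al+1-(m+1)\be\le0$, i.e.\ $\be\ge\be_0$), and the intermediate region $N(1+t)^{-1}\lesssim|\xi|\lesssim N(1+t)^{-\al}$ is treated by a separate conjugation with $\diag\bigl(1,\exp(\int\si)\bigr)$, which moves the non-integrable part of $\si$ into off-diagonal entries of size $|\xi|\,\bigl|\exp(\int_0^t\si(\tau)\,d\tau)-\om_\infty\bigr|$, whose time integral over that region is exactly what \eqref{HW-A1} bounds. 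This three-zone structure --- the paper's $Z_D$, $Z_I$, $Z_H$ together with the transformation $\tM_1$ of Lemma \ref{lemm-estZI} --- is the missing idea; you have correctly located where $\be\ge\be_0$ must come from, but not supplied the construction that produces it.
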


Here we note that \eqref{HW-A1} with $\al=1$ is trivial by \eqref{g-zero}. 
If $\al=1$ or $m=0$, then $\be_0=1$, which is approximately VSO. 
On the other hand, if $\al<1$ and $m\ge1$, then $\be_0<1$, which is definitely not VSO. 
Moreover, $\be_0$ is monotone decreasing with respect to $m$. 
Thus, by adding the assumption of higher differentiability and decay of higher derivatives for $\si(t)$, $\be_0$ becomes smaller; that is, 
if we focus only on the condition $k=0$ in \eqref{HW-A2}, 
the restriction on the order of the amplitude of $\si(t)$ is weakened. 
An interesting aspect of this result is that the order of the oscillating term $\si(t)$ need not necessarily be less than the order of the principal part $\mu_0(1+t)^{-1}$. 
It follows that $b(t)$ can be negative, and then \eqref{E-u} means that the energy increases, 
but the order of $\eta(t)$ is consequently the same as in the case $\si(t)\equiv0$. 

The assertion of the main theorems of this paper, which will be introduced in the next chapter, is that the energy decay estimate \eqref{decay0} can be obtained though the order of the amplitude of $\si(t)$ does not satisfy the assumption of Theorem \ref{Thm-HW}. 
The new approaches to achieving this is to extend the condition \eqref{HW-A1} and introduce a new function space for the initial data. 

%
\section{Main theorems and examples}
%

\subsection{Main theorems}

Let us introduce the following three properties for the dissipative coefficient $b(t)$ based on the assumptions introduced in \cite{HW08}. 
\begin{description}
\item[(A1)] Non-effective damping condition: 
There exist $T \ge 0$ and $\mu(t)$ on $\R_+$ satisfying 
\begin{equation*}
  \sup_{t \ge T} \{(1+t)\mu(t)\}=:\mu_0<1, \;\;
  \mu(t)>0
  \;\text{ and }\;
  \mu'(t)<0
  \;\text{ for any $t \ge T$}
\end{equation*}
such that the $\int^\infty_0(b(t)-\mu(t))\,dt$ converges; 
we shall call $\mu(t)$ the principal part of $b(t)$. 
\item[(A2)] Stabilization condition: 
The oscillating part of $b(t)$ defined by 
\begin{equation*}
  \si(t):=b(t)-\mu(t)
\end{equation*}
satisfies 
\begin{equation*}
  \int^\infty_0 \left|\int^\infty_s \si(\tau)\,d\tau\right|\,ds<\infty. 
\end{equation*}
\item[(A3)] $C^m$-property: $b(t) \in C^m(\R_+)$. 
\end{description}
Here we note that
(A1) is a generalization of \eqref{mu0-si} with $\mu_0<1$ and 
(A2) is the assumption corresponding to the case $\al=0$ in \eqref{HW-A1} with $\om_\infty = \exp(\int_0^\infty \si(\tau)\,d\tau)$. 

\begin{theorem}\label{Thm0} 
Let $(u_0,u_1) \in H^1 \times L^2$, and $b(t)$ satisfy 
$\mathrm{(A1)}$, $\mathrm{(A2)}$ and $\mathrm{(A3)}$. 
If there exist a positive and strictly increasing continuous function $\Th(t)$ and a positive and monotone continuous function $\varXi(t)$ such that 
\begin{equation}\label{stb}
  \int^\infty_t \left|\int^\infty_s \si(\tau)\,d\tau\right|\,ds
  \le \Th(t)^{-1},
\end{equation}
\begin{equation}\label{Cm}
  \left|\frac{d^k}{dt^k}b(t)\right| \le \varXi(t)^{k+1}
  \;\; (k=0,\ldots,m),
\end{equation}
\begin{equation}\label{XiTh-thm0}
  \sup_{t\ge0} \left\{\frac{\varXi(t)}{\Th(t)}\right\} < \infty
\end{equation}
and
\begin{equation}\label{ass0}
  \sup_{t\ge0} \left\{
  \Th(t)^{-m} \int^t_0 \varXi(s)^{m+1}\,ds\right\}
  <\infty, 
\end{equation}
then the following energy decay estimate with \eqref{E0} is established: 
\begin{equation}\label{decay-thm}
  E(t;u) \lesssim E_0 \exp\(-\int^t_0 \mu(s)\,ds\). 
\end{equation} 
\end{theorem}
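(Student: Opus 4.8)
The plan is to pass to the Fourier side in $x$ and analyze the resulting $\xi$-parametrized family of ordinary differential equations. Writing $\hu(t,\xi)=\cF_{x\to\xi}u$, equation \eqref{u} becomes $\pa_t^2\hu+|\xi|^2\hu+b(t)\pa_t\hu=0$, and with the energy density
\begin{equation*}
  E(t,\xi):=\frac12\(|\xi|^2|\hu(t,\xi)|^2+|\pa_t\hu(t,\xi)|^2\),
\end{equation*}
Plancherel's identity gives $E(t;u)=\int_{\R^n}E(t,\xi)\,d\xi$, so it suffices to prove the pointwise-in-$\xi$ bound $E(t,\xi)\lesssim \e^{-\int_0^t\mu(s)\,ds}(|\xi|^2|\hat u_0|^2+|\hat u_1|^2+|\hat u_0|^2)$ and integrate, the final term producing the $\|u_0\|^2$ in \eqref{E0}. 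I would then split the extended phase space by comparing the frequency to the coefficient scale $\varXi(t)$: a hyperbolic (oscillation) zone $Z_{\mathrm{hyp}}=\{(t,\xi):|\xi|\ge N\varXi(t)\}$, in which $|\xi|$ dominates every coefficient term so that diagonalization is effective, and its complement $Z_{\mathrm{diss}}$, the two being separated along a curve $t=\tx$ defined by $|\xi|=N\varXi(\tx)$ for a large fixed constant $N$. Condition \eqref{XiTh-thm0} is what makes this separation compatible with the stabilization scale $\Th$.

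The heart of the proof lies in $Z_{\mathrm{hyp}}$. For $U={}^t(|\xi|\hu,\pa_t\hu)$ the system has skew-symmetric principal part with eigenvalues $\pm\i|\xi|$; diagonalizing it, I would bring the equation to $\pa_t\tU=\bigl(\i|\xi|\diag(1,-1)-\tfrac{b(t)}2 I+R(t,\xi)\bigr)\tU$, where $R$ collects the off-diagonal remainder, of size $O(\varXi/|\xi|)=O(1/N)$ on $Z_{\mathrm{hyp}}$ and carrying the fast phase $\e^{\pm2\i|\xi|t}$. The diagonal term $-\tfrac{b}{2}I$ produces $|\tU|\sim\e^{-\frac12\int_0^t b}$, and since $\int_0^\infty\si$ converges by (A1)--(A2) this is comparable to $\e^{-\frac12\int_0^t\mu}$, whence the desired main decay $E(t,\xi)\simeq|\tU|^2\lesssim\e^{-\int_0^t\mu}$. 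It then remains to show that $R$ costs only a bounded multiplicative factor. I would treat the smooth part of $b$ by iterating the diagonalization $m$ times, each step spending one derivative (hence \eqref{Cm} and (A3)) and gaining a power of $1/|\xi|$, and treat the oscillating part $\si$ by integrating by parts against the fast phase so as to transfer derivatives off $\si$ onto the slowly varying symbols, thereby replacing the possibly large pointwise size of $\si$ by its stabilized primitive controlled through \eqref{stb}. The $m$-fold accounting is arranged so that the accumulated contribution of $R$ along $Z_{\mathrm{hyp}}$ is dominated by a multiple of $\Th(t)^{-m}\int_0^t\varXi(s)^{m+1}\,ds$, which is uniformly bounded by \eqref{ass0}; this is precisely the bookkeeping that closes the estimate.

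On $Z_{\mathrm{diss}}$ the frequency is subordinate to the dissipation and the evolution is elliptic-like, so it cannot oscillate; here I would argue directly from the energy identity \eqref{E-u}, bounding $E(t,\xi)$ up to $t=\tx$ by $E(0,\xi)$ times a factor comparable to $\e^{-\int_0^t\mu}$ and a harmless power of $N$, with the extra $|\hat u_0|^2$ entering to cover the regime $|\xi|\to0$. Matching the two estimates at $t=\tx$ and integrating the resulting pointwise bound over $\xi\in\R^n$ via Plancherel then yields \eqref{decay-thm}.

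The step I expect to be the main obstacle is the control of $R$ on $Z_{\mathrm{hyp}}$, and within it the oscillating part $\si$, whose amplitude need not be dominated by $\mu$. Its effect must be captured purely through cancellation: one has to convert the large pointwise size of $\si$ into the finite doubly-integrated tail \eqref{stb} by repeated integration by parts, carefully balancing the gain $1/|\xi|\sim 1/(N\varXi)$ per step against the loss of one derivative from \eqref{Cm}, so that after exactly $m$ steps the residual matches the quantity bounded in \eqref{ass0}. Making this accounting uniform simultaneously in $t$ and $\xi$, and consistent across the zone boundary $\tx$, is the delicate point on which the whole argument rests.
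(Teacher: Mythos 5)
Your overall skeleton (partial Fourier transform, zone decomposition, $m$-fold diagonalization at high frequency) matches the paper's, but two of your three building blocks do not close as stated. First, the zone geometry. You separate along $|\xi|=N\varXi(t)$, yet the bound you claim for the accumulated remainder --- that $\int |R|\,ds$ is dominated by $\Th(t)^{-m}\int_0^t\varXi(s)^{m+1}\,ds$ --- requires $|\xi|\gtrsim\Th(t)$, because after $m$ diagonalization steps the remainder has size $\varXi(t)^{m+1}/|\xi|^{m}$ and \eqref{ass0} only controls $\Th^{-m}\int\varXi^{m+1}$. Condition \eqref{XiTh-thm0} gives $\varXi\lesssim\Th$, not the reverse, and in the motivating examples $\Th/\varXi\to\infty$; hence on your zone $\{|\xi|\ge N\varXi(t)\}$ the factor $(\Th(t)/|\xi|)^{m}$ is unbounded and the bookkeeping fails. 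The correct hyperbolic boundary is $|\xi|\sim N\Th(t)$ (the paper's $t_H$), and since $\Th$ is increasing this leaves, for each frequency, an unbounded late-time region $t\ge t_H(|\xi|)$ that your two-zone decomposition never addresses. That intermediate region is where the essential new idea lives: conjugating the first-order system by $\diag(1,\om(t))$ with $\om(t)=\exp(\int_t^\infty\si(s)\,ds)$ replaces $b$ by $\mu$ on the diagonal at the price of off-diagonal terms of size $|\xi|\,|1-\om(t)|\lesssim|\xi|\,|\int_t^\infty\si|$, whose time integral from $t_H(|\xi|)$ onward is finite precisely by the stabilization condition \eqref{stb}. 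Your proposal invokes \eqref{stb} only through an unspecified ``integration by parts against the fast phase'' inside the hyperbolic zone, which is neither where it is needed nor a worked-out mechanism.

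Second, the low-frequency zone. You propose to obtain $E(t,\xi)\lesssim\e^{-\int_0^t\mu}E(0,\xi)$ ``directly from the energy identity.'' The identity $\pa_t\cE=-2b(t)|\pa_t v|^2$ gives at best monotonicity when $b\ge0$ (and here $b$ may be negative, so it yields only $\cE(t)\le\e^{2\int_0^t|b|}\cE(0)$, i.e.\ possible growth); it can never produce a decay factor, since $|\pa_t v|$ may vanish while $\cE$ does not. The decay in this zone comes instead from the explicit Volterra series for the fundamental matrix: the entries multiplying $v_1$ decay like $\e^{-\int_0^t\mu}$, while the entries multiplying $|\xi|v_0$ are merely bounded, and the decay of that part of $\cE$ is extracted from the zone inequality $|\xi|\lesssim\mu(t)\lesssim\e^{-\int_0^t\mu}$ --- which is also exactly why $E_0$ must contain $\|u_0\|^2$ rather than only $\|\nabla u_0\|^2$. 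Your remark that $|\hat u_0|^2$ ``covers the regime $|\xi|\to0$'' gestures at this, but the mechanism you name cannot deliver it. A repaired proof needs (i) the boundary $|\xi|\sim N\mu(t)$ with the Volterra representation in the dissipative zone, (ii) the boundary $|\xi|\sim N\Th(t)$ for the hyperbolic zone, and (iii) a separate late-time argument in between, based on \eqref{stb} and the conjugation by $\om(t)$.
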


The comparison of Theorem \ref{Thm0} and Theorem \ref{Thm-HW} will be explained later with specific examples, but generally speaking, 
\eqref{stb} is an extension of \eqref{HW-A1} to the case $\al<0$, 
\eqref{Cm} corresponds to \eqref{HW-A2}, 
\eqref{XiTh-thm0} is the assumption corresponding to $\al \le \be_0$, which is clear from \eqref{HW-albe}, and 
\eqref{ass0} is the assumption corresponding to \eqref{HW-albe}. 

In previous studies on the energy decay for linear dissipative wave equations, the initial data was generally set to $(u_0,u_1) \in H^1 \times L^2$. 
In contrast, the second main theorem of this paper asserts that if the initial data belong to a more restricted function space than $H^1 \times L^2$, it may be possible to obtain some kind of energy decay estimate though the order of the amplitude of $\si(t)$ does not satisfy the assumptions of Theorem \ref{Thm0}.

Let $\rho$ be a positive and strictly increasing continuous function on $\R_+$ and satisfies that 
$\lim_{r\to\infty} \rho(r)=\infty$ 
and 
\begin{equation}\label{zeta}
  \ze(r):= \frac{r}{\rho(r)}
\end{equation}
is strictly increasing; 
such a function $\rho$ is referred to as a weight function here. 
Let us define the Fourier transform of $f$ with respect to the spatial variables as follows and denote it as $\hat{f}$: 
\[
  \hat{f}(\xi):=(2\pi)^{-\frac{n}{2}}\int_{\R^n} e^{-ix\cd\xi}f(x)\,dx. 
\]
For a positive real number $\ka$ and a weight function $\rho$, we define the space of initial data $\cH(\rho,\ka)$ by 
\begin{equation*}
  \cH(\rho,\ka):=\left\{(u_0,u_1)\in H^1 \times L^2\;;\;
    \left\|(u_0,u_1)\right\|_{\cH(\rho,\ka)}<\infty
  \right\},
\end{equation*}
where 
\begin{equation*}
  \left\|(u_0,u_1)\right\|_{\cH(\rho,\ka)}
  :=
  \(\int_{\R^n} e^{2\ka\rho(|\xi|)}
  \(\(1+|\xi|^2\)|\hat{u}_0(\xi)|^2 + |\hat{u}_1(\xi)|^2\)\,d\xi\)^{\frac12}.  
\end{equation*}
Moreover, we define $\cH(\rho,\infty)$ by 
\[
  \cH(\rho,\infty):=\bigcap_{\ka>0}\cH(\rho,\ka). 
\]
Here the following lemma is trivial from the definition of $\cH(\rho,\ka)$: 
\begin{lemma}\label{Lemm-cH}
The following properties are valid: 
\begin{itemize}
\item[(i)] 
If $0<\ka_1<\ka_2 \le \infty$, then 
$\cH(\rho,\ka_2) \subset \cH(\rho,\ka_1)$. 
\item[(ii)] 
If the weight functions $\rho$ and $\tilde{\rho}$ satisfy 
$\lim_{r\to\infty}\rho(r)/\tilde{\rho}(r)=0$, then 
$\cH(\tilde{\rho},\ka) \subset \cH(\rho,\infty)$ for any $\ka>0$. 
\item[(iii)] 
If $\rho(r)=\log(e+r)$, then $\cH(\rho,\ka)=H^{\ka+1} \times H^\ka$, where $H^\ka$ is the usual Sobolev space of order $\ka$. 
\item[(iv)] 
Let $\nu>1$ and $\rho_\nu(r) := (1+r)^{1/\nu}$. 
If $u_0$ and $u_1$ are support compacted Gevrey class of order $\nu$, then there exists $\ka>0$ such that 
$(u_0,u_1) \in \cH(\rho_\nu,\ka)$ (see \cite{Ro93}). 
\end{itemize}
\end{lemma}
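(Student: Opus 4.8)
The plan is to verify each of the four claimed properties of the spaces $\cH(\rho,\ka)$ directly from the definition of the norm $\|(u_0,u_1)\|_{\cH(\rho,\ka)}$, since each is essentially a pointwise comparison of the weight $e^{2\ka\rho(|\xi|)}$ against another weight, integrated against the fixed measure $((1+|\xi|^2)|\hu_0|^2+|\hu_1|^2)\,d\xi$. For part (i), I would observe that since $\rho\ge 0$ and $\ka_1<\ka_2$ we have $e^{2\ka_1\rho(|\xi|)}\le e^{2\ka_2\rho(|\xi|)}$ for all $\xi$, whence $\|(u_0,u_1)\|_{\cH(\rho,\ka_1)}\le\|(u_0,u_1)\|_{\cH(\rho,\ka_2)}$ and the inclusion follows; the case $\ka_2=\infty$ is immediate from the definition $\cH(\rho,\infty)=\bigcap_{\ka>0}\cH(\rho,\ka)$.

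For part (ii), the key point is that $\rho$ grows strictly more slowly than $\tilde\rho$ at infinity. Given any $\ka>0$ I must produce, for each target exponent $\ka'>0$, a bound $e^{2\ka'\rho(|\xi|)}\lesssim e^{2\ka\tilde\rho(|\xi|)}$ valid for all $\xi$. From $\lim_{r\to\infty}\rho(r)/\tilde\rho(r)=0$ there is an $R$ with $\rho(r)\le(\ka/(2\ka'))\tilde\rho(r)$ for $r\ge R$, so $2\ka'\rho(|\xi|)\le\ka\tilde\rho(|\xi|)$ for $|\xi|\ge R$; on the bounded region $|\xi|\le R$ both weights are bounded and continuous, so the ratio is controlled by a constant. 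Multiplying by the fixed measure and integrating gives $\|(u_0,u_1)\|_{\cH(\rho,\ka')}\lesssim\|(u_0,u_1)\|_{\cH(\tilde\rho,\ka)}<\infty$. Since $\ka'>0$ was arbitrary, $(u_0,u_1)\in\cH(\rho,\ka')$ for every $\ka'$, i.e. $(u_0,u_1)\in\cH(\rho,\infty)$.

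For part (iii), with $\rho(r)=\log(e+r)$ one has $e^{2\ka\rho(|\xi|)}=(e+|\xi|)^{2\ka}\simeq(1+|\xi|^2)^{\ka}$ (comparing $(e+|\xi|)^2$ with $1+|\xi|^2$ uniformly in $\xi$), so the weighted integral becomes $\int_{\R^n}(1+|\xi|^2)^\ka\big((1+|\xi|^2)|\hu_0|^2+|\hu_1|^2\big)\,d\xi$, which is exactly $\|u_0\|_{H^{\ka+1}}^2+\|u_1\|_{H^\ka}^2$ up to equivalence of norms; this identifies $\cH(\rho,\ka)=H^{\ka+1}\times H^\ka$. Part (iv) is the one place I would lean on an external fact rather than a self-contained computation: the characterization of compactly supported Gevrey functions of order $\nu$ in terms of exponential decay $|\hat f(\xi)|\lesssim e^{-c|\xi|^{1/\nu}}$ of their Fourier transforms, for which I would cite \cite{Ro93}. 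Given such a $c>0$, choosing $\ka$ with $2\ka<2c$ (after absorbing the difference between $(1+|\xi|)^{1/\nu}$ and $|\xi|^{1/\nu}$, and the polynomial factor $1+|\xi|^2$, into the exponential) makes $e^{2\ka\rho_\nu(|\xi|)}|\hat f(\xi)|^2$ integrable, giving $(u_0,u_1)\in\cH(\rho_\nu,\ka)$.

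The routine parts are (i)–(iii), which reduce to elementary weight comparisons; the only genuine content is in part (iv), where the main obstacle is matching the precise Gevrey-to-Fourier decay characterization to the particular weight $\rho_\nu(r)=(1+r)^{1/\nu}$. Since the statement itself flags this with the parenthetical reference (see \cite{Ro93}) and the lemma is asserted to be ``trivial from the definition,'' I expect the author's proof to be brief, either omitting details or citing \cite{Ro93} for (iv) exactly as I would.
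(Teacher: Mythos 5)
Your proposal is correct and fills in exactly the routine weight-comparison and Paley--Wiener-type arguments that the paper has in mind: the paper offers no proof at all, simply declaring the lemma ``trivial from the definition of $\cH(\rho,\ka)$'' and citing \cite{Ro93} for (iv), just as you anticipated. No discrepancies to report.
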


\begin{theorem}\label{Thm1} 
Let $(u_0,u_1) \in \cH(\rho,\infty)$ for a weight function $\rho$, 
and $b(t)$ satisfy $\mathrm{(A1)}$, $\mathrm{(A2)}$ and $\mathrm{(A3)}$. 
If there exist a positive and strictly increasing continuous function $\Th(t)$ and a positive and monotone continuous function $\varXi(t)$ such that 
\eqref{stb}, \eqref{Cm}, 
\begin{equation}\label{XiTh-thm1}
  \sup_{t\ge0} \left\{\frac{\varXi(t)}{\ze^{-1}\(\Th(t)\)}\right\} < \infty
\end{equation}
and
\begin{equation}\label{ass1}
  \sup_{t\ge0} \left\{
  \frac{\Th(t)}{\ze^{-1}\(\Th(t)\)^{m+1}}
  \int^t_0 \varXi(s)^{m+1}\,ds\right\}
  <\infty, 
\end{equation}
where $\ze$ is defined by \eqref{zeta}, 
then there exists $\ka_0>0$ such that the energy decay estimate \eqref{decay-thm} with $E_0=\|(u_0,u_1)\|^2_{\cH(\rho,\ka_0)}$ is established. 
\end{theorem}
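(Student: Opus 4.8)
The plan is to pass to the Fourier side and reduce \eqref{u} to the family of ordinary differential equations
\[
  \pa_t^2 v + b(t)\pa_t v + |\xi|^2 v = 0, \qquad v(t,\xi):=\hat u(t,\xi),
\]
so that, by Plancherel, it suffices to estimate the pointwise energy $\cE(t,\xi):=|\xi|^2|v(t,\xi)|^2+|\pa_t v(t,\xi)|^2$ and integrate against $d\xi$. First I would fix a large constant $N\ge1$ and introduce the separating curve $t_\xi$ defined by $\ze^{-1}(\Th(t_\xi))=|\xi|/N$, i.e. $\Th(t_\xi)=\ze(|\xi|/N)$, which by monotonicity of $\Th$ and $\ze$ splits the extended phase space into a hyperbolic zone $Z_{hyp}:=\{(t,\xi):|\xi|\ge N\ze^{-1}(\Th(t))\}=\{t\le t_\xi\}$ and a dissipative zone $Z_{pd}:=\{t\ge t_\xi\}$. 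The reason for using $\ze^{-1}(\Th(t))$ in place of the boundary $\Th(t)$ employed for Theorem \ref{Thm0} is that $\ze(r)=r/\rho(r)\le r$ forces $\ze^{-1}(\Th)\ge\Th$, so the hyperbolic zone is pushed to higher frequencies, precisely where the weight $e^{\ka\rho(|\xi|)}$ of $\cH(\rho,\ka)$ is large enough to pay for the losses created by the stronger oscillation now allowed in \eqref{XiTh-thm1}--\eqref{ass1}.

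In $Z_{hyp}$ I would write the system $\pa_t V=A(t,\xi)V$ for $V={}^t(|\xi|v,\pa_t v)$ and carry out the standard $m$-step diagonalization, exactly as in the proof of Theorem \ref{Thm0}. The zone condition \eqref{XiTh-thm1} guarantees $|\xi|\gtrsim\varXi(t)$ throughout $Z_{hyp}$, so by \eqref{Cm} every step is legitimate (the eigenvalues stay genuinely complex) and after $m$ steps the off-diagonal remainder obeys $\|R_m(t,\xi)\|\lesssim\varXi(t)^{m+1}|\xi|^{-m}$. The crucial computation is then
\[
\begin{aligned}
  \int_0^t\|R_m(s,\xi)\|\,ds
  &\lesssim \frac{1}{|\xi|^m}\int_0^t\varXi(s)^{m+1}\,ds
  \lesssim \frac{1}{|\xi|^m}\cdot\frac{\ze^{-1}(\Th(t))^{m+1}}{\Th(t)}\\
  &\le \frac{1}{N^m}\cdot\frac{\ze^{-1}(\Th(t))}{\Th(t)}
  = \frac{1}{N^m}\,\rho\!\(\ze^{-1}(\Th(t))\)
  \le \frac{1}{N^m}\,\rho(|\xi|),
\end{aligned}
\]
where the second inequality is exactly \eqref{ass1}, the third uses $|\xi|\ge N\ze^{-1}(\Th(t))$ in $Z_{hyp}$, and the identity $\ze^{-1}(\Th)/\Th=\rho(\ze^{-1}(\Th))$ is immediate from \eqref{zeta}. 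The oscillating part $\si$ enters only through the first diagonalization step; setting $\Sigma_1(s):=\int_s^\infty\si(\tau)\,d\tau$, integrating by parts against $\Sigma_1$ and invoking \eqref{stb} in the form $\int_{t_\xi}^\infty|\Sigma_1(s)|\,ds\le\Th(t_\xi)^{-1}$ produces a term of size $\lesssim|\xi|\,\Th(t_\xi)^{-1}=N\rho(|\xi|/N)\le N\rho(|\xi|)$, again of order $\rho(|\xi|)$, using $\Th(t_\xi)=\ze(|\xi|/N)$ and \eqref{zeta}. Combining the diagonal decay $-\int_0^t b$ with these two $\rho(|\xi|)$-bounded error integrals, and using that $\int_0^t\si$ is bounded by $\mathrm{(A1)}$--$\mathrm{(A2)}$ so that $\exp(-\int_0^t b)\simeq\exp(-\int_0^t\mu)$, I obtain in $Z_{hyp}$
\[
  \cE(t,\xi)\lesssim e^{C\rho(|\xi|)}\exp\!\(-\int_0^t\mu(s)\,ds\)\,\cE(0,\xi).
\]

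For a frequency that has crossed into $Z_{pd}$ at some $t_\xi<t$, its energy at $t_\xi$ is already controlled by the hyperbolic bound, and on $[t_\xi,t]$ I would run a direct energy estimate of the type used for Theorem \ref{Thm0}; here the non-effectivity $\mu_0<1$ from $\mathrm{(A1)}$ is what guarantees that the micro-energy keeps decaying like $\exp(-\int_{t_\xi}^t\mu)$ with a bounded factor rather than being over-damped. Gluing the two representations at $t_\xi$ yields the uniform pointwise bound $\cE(t,\xi)\lesssim e^{C\rho(|\xi|)}\exp(-\int_0^t\mu)\,\cE(0,\xi)$ for every $\xi$; for the genuinely low frequencies the factor $e^{C\rho(|\xi|)}$ is harmless and for the larger ones it is again absorbed by the weight. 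Integrating over $\xi$ and choosing $\ka_0\ge C/2$, so that $e^{C\rho(|\xi|)}\le e^{2\ka_0\rho(|\xi|)}$, gives
\[
  E(t;u)\lesssim \exp\!\(-\int_0^t\mu(s)\,ds\)\,\|(u_0,u_1)\|_{\cH(\rho,\ka_0)}^2,
\]
which is finite because $(u_0,u_1)\in\cH(\rho,\infty)=\bigcap_{\ka>0}\cH(\rho,\ka)$; this is exactly \eqref{decay-thm} with $E_0=\|(u_0,u_1)\|_{\cH(\rho,\ka_0)}^2$ and explains the existence of $\ka_0$ asserted in the statement.

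The main obstacle I expect is the bookkeeping of the $m$-fold diagonalization: making the remainder estimate $\int_0^t\|R_m\|\,ds\lesssim\rho(|\xi|)$ rigorous and uniform across $Z_{hyp}$, and matching the hyperbolic and dissipative representations at the moving boundary $t_\xi$ without losing the gain supplied by the weight. A second delicate point is verifying that the oscillation-cancellation term genuinely closes under \eqref{stb} with the enlarged boundary $\ze^{-1}(\Th)$ in place of $\Th$, since this is precisely the step where the extension of \eqref{HW-A1} to $\al<0$ and the extra smoothness of the data in $\cH(\rho,\infty)$ are consumed; once that is in hand, choosing $N$ large and $\ka_0$ accordingly closes the argument.
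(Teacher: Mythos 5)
Your treatment of the high-frequency/hyperbolic zone is essentially the paper's: the $m$-step diagonalization under \eqref{Cm} and \eqref{XiTh-thm1}, followed by
\begin{equation*}
  \int_0^t\|R_m(s,\xi)\|\,ds
  \lesssim |\xi|^{-m}\int_0^t\varXi(s)^{m+1}\,ds
  \lesssim \frac{\ze^{-1}(\Th(t))}{N^m\,\Th(t)}
  =\frac{\rho\(\ze^{-1}(\Th(t))\)}{N^m}
  \lesssim \rho(|\xi|),
\end{equation*}
is exactly how the paper consumes \eqref{ass1}, and your identification of $\ze^{-1}(\Th)$ as the correct zone boundary (so that the loss is paid by the weight $e^{\ka\rho(|\xi|)}$) is the right idea. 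However, there are two genuine gaps. First, your decomposition leaves the whole region $t\ge t_\xi$ --- and, since $t_\xi$ is undefined for $|\xi|\le N$, \emph{all} of $t\ge 0$ at low frequencies --- to a ``direct energy estimate,'' and that step fails: there $b(t)=\mu(t)+\si(t)$ still contains the oscillating part, which under \eqref{Cm} may be as large as $\varXi(t)\gg\mu(t)$ and may change sign, so Gronwall applied to $\cE$ produces a factor $\exp(\int|\si|)$ that is unbounded; the non-effectivity $\mu_0<1$ does nothing to cure this. The device the paper uses here --- and the place where (A2)/\eqref{stb} is actually spent --- is to conjugate the first-order system by $\diag(1,\om(t))$ with $\om(t)=\exp(\int_t^\infty\si(\tau)\,d\tau)$, which removes $\si$ from the diagonal at the cost of off-diagonal entries of size $|\xi|\,|1-\om(t)|\lesssim|\xi|\,|\int_t^\infty\si|$; after a further constant and near-identity diagonalization the diagonal real part becomes exactly $-\mu(t)/2$, and the remainder integrates over $[t_\xi,\infty)$ to $\lesssim|\xi|/\Th(t_\xi)=N\rho(|\xi|/N)$ by \eqref{stb}. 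You compute precisely this quantity, but you attribute it to an integration by parts inside the hyperbolic zone, where it is neither needed (there $\si$ enters only through $\int_0^t b$, controlled by the convergence of $\int\si$ from (A1)) nor deployed in the form required.

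Second, the low-frequency regime needs its own analysis: the paper introduces a dissipative zone $0\le t\le t_D(|\xi|)=\mu^{-1}(\mu_0|\xi|/N)$ and derives $\cE(t,\xi)\lesssim\exp(-2\int_0^t\mu)\,(|v_0(\xi)|^2+|v_1(\xi)|^2)$ there via a Volterra-series representation of the fundamental matrix (note the factor $2$ in the exponent and the replacement of $|\xi|^2|v_0|^2$ by $|v_0|^2$, both of which are essential to glue with the subsequent intermediate-zone estimate). Neither of your two zones produces this. So the skeleton is correct for $|\xi|\ge N$ and $t\le t_\xi$, but the intermediate- and low-frequency machinery --- which is exactly where the stabilization hypothesis \eqref{stb} and the structure of (A1) do their work --- is missing from the proposal.
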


\subsection{Examples}
Let us consider the conditions under which Theorem \ref{Thm0} and Theorem \ref{Thm1} can be applied to specific examples of $\Th(t)$, $\varXi(t)$ and $\ze(r)$.  

\begin{example}\label{Ex0}
Let $m$ be a positive integer. 
For $\al<0$, $\be \le 1$, $\be \neq 1/(m+1)$ and $\nu>1$, we define 
$\Th(t)$, $\varXi(t)$ and $\ze(r)$ by 
\begin{equation}\label{ThXize}
  \Th(t)=(1+t)^{-\al},\;\;
  \varXi(t)=(1+t)^{-\be}
  \;\text{ and }\;
  \ze(r)=r (1+r)^{-\frac{1}{\nu}}, 
\end{equation}
that is, $\rho(r) = \rho_\nu(r)$, where $\rho_\nu$ is defined in Lemma \ref{Lemm-cH}. 
\begin{itemize}
\item[(i)] 
If $\al m - \be (m+1)+ 1 \le 0$, that is, 
\begin{equation}\label{Ex_al-be-1}
   \be \ge \be_0 = \be_0(\al,m)
   :=\frac{m}{m+1}\al + \frac{1}{m+1}, 
\end{equation}
then \eqref{XiTh-thm0} and \eqref{ass0} are valid, hence Theorem \ref{Thm0} is applicable. 
\item[(ii)] 
If $\al m - \be (m+1) + 1 + \al(m+1)/(\nu-1)\le 0$, that is, 
\begin{equation*}
  \be  \ge \tilde{\be_0} = \tilde{\be_0}(\al,m,\nu)
  :=\(\frac{m}{m+1}+\frac{1}{\nu-1}\)\al + \frac{1}{m+1},
\end{equation*}
then \eqref{XiTh-thm1} and \eqref{ass1} are valid, hence Theorem \ref{Thm1} is applicable. 
\end{itemize}
Since \eqref{Ex_al-be-1} is the same as \eqref{HW-albe}, Theorem \ref{Thm0} and Theorem \ref{Thm1} can be considered as extensions of Theorem \ref{Thm-HW} for $\al<0$. 
Moreover, for $\al<0$, we note the followings. 
\begin{itemize}
\item 
$\be_0 > \tilde{\be}_0$. 
\item 
$\be_0$ and $\tilde{\be}_0$ are monotone decreasing with respect to $m$ and monotone increasing with respect to $\al$.  
\item 
$\tilde{\be}_0$ is monotone increasing with respect to $\nu$ and $\lim_{\nu\to\infty}\tilde{\be}_0=\be_0$. 
\end{itemize}
\end{example}

Next, we will introduce some concrete examples of the dissipative coefficient $b(t)$ that can be applied to Theorem \ref{Thm0} and Theorem \ref{Thm1}. 
For simplicity, we will consider specific examples of $\si(t)$ in the case of \eqref{mu0-si} with $0<\mu_0<1$, and paying particular attention to the order of the amplitude of $\si(t)$ described by the parameter $p$ in the following examples. 

\begin{example}\label{Ex1}
For $p\ge -1$, $q > 1$ and a positive integer $m$, 
we define $\si_1(t)$, $\Th_1(t)$ and $\varXi_1(t)$ on $\R_+$ by 
\begin{equation}\label{si-sin}
  \si_1(t):= (1+t)^p \sin (1+t)^q, 
\end{equation}
\begin{equation*}
  \Th_1(t): = (1+t)^{-\al_1}, \;\;
  \al_1 = p - q + 2,
\end{equation*}
\begin{equation*}
  \varXi_1(t): = (1+t)^{-\be_1}
  \;\text{ and }\;
  \be_1 = -q + 1 + \frac{-p+q-1}{m+1}, 
\end{equation*}
where $\be_1 \neq 1/(m+1)$. 
Let $b(t)$ be given by \eqref{mu0-si} with $\si(t)=\si_1(t)$, and suppose that 
\begin{equation*}
  p < q - 2. 
\end{equation*}
Then, for any positive integer $m$, there exists a positive constant $C_1$ such that \eqref{stb} and \eqref{Cm} are valid 
for $\Th(t)=C_1\Th_1(t)$ and $\varXi(t)=C_1\varXi_1(t)$ 
by Lemma \ref{lemm1-Ex1} and Lemma \ref{lemm2-Ex1} in Appendix. 
Moreover, if 
\begin{equation*}
  \al_1 m - \be_1 (m+1)+ 1 = (p+1)(m+1) \le 0, 
\end{equation*}
that is, 
\begin{equation}\label{Ex1_pq-1}
  p \le p_1:=-1, 
\end{equation}
then \eqref{XiTh-thm0} and \eqref{ass0} are valid by Example \ref{Ex0} (i), and thus Theorem \ref{Thm0} can be applicable. 
\end{example}

When applying $\si(t)$ defined in Example \ref{Ex1} to Theorem \ref{Thm0}, Example \ref{Ex0} (i) requires \eqref{Ex1_pq-1}, it follows that $|\si(t)|\lesssim (1+t)^{-1}$. 
Thus $\si(t)$ must be almost VSO and the order is the same or less than the principal part $\mu_0(1+t)^{-1}$. 
In such a case, many results are already known and Theorem \ref{Thm0} does not provide any new advances. 
For example, \cite{AE23} and \cite{GG25} provide more detailed consideration of $\si(t)$, which is closer to VSO. 
In \cite{AE23}, the relationship between a more general principal part, which can be introduce logarithmic order, an oscillating part and the decay order of the energy is studied. 
In \cite{GG25}, the energy decay estimate for the case that 
$(1+t)|\si(t)| \le \mu_0$ is investigated in detail. 

\begin{example}\label{Ex11}
Let $\si(t)$, $\Th(t)$ and $\varXi(t)$ be the same as in Example \ref{Ex1}, 
and $\rho(r)=\rho_\nu(r)$ with $\nu>1$. 
Since $\ze(r) \simeq (1+r)^{(\nu-1)/\nu}$, if 
\begin{equation*}
  \al_1 m - \be_1 (m+1)+ 1 + \frac{\al_1(m+1)}{\nu-1} \le 0, 
\end{equation*}
that is, 
\begin{equation*}
  p \le -1 + \frac{q-1}{\nu},
\end{equation*}
then \eqref{XiTh-thm1} and \eqref{ass1} are valid by Lemma \ref{lemm3-Ex1}, and thus Theorem \ref{Thm1} can be applicable. 
\end{example}

Example \ref{Ex11} tells us that if $(u_0,u_1) \in \cH(\rho_\nu,\infty)$, 
then the dissipative coefficient $b(t)$: 
\begin{equation}\label{b_Ex11}
  b(t) = \mu_0(1+t)^{-1} + (1+t)^{\tp_1} \sin(1+t)^{q},
\end{equation}
where
\begin{equation}
  \tp_1=\tp_1(q,\nu):= p_1 + \frac{q-1}{\nu} = -1 + \frac{q-1}{\nu}
\end{equation}
satisfies the assumptions of Theorem \ref{Thm1} even though $\tilde{p}_1>0$ if $q > \nu+1$, thus the decay estimate \eqref{decay-thm} holds. 
Here, we emphasize that, no results of such an energy decay estimate have been known when $b(t)$ is given by \eqref{b_Ex11} to the best of the author's knowledge.

The following $\si(t)$ is an example that the contribution of $C^m$-property (A3) can be confirmed when Theorem \ref{Thm0} and Theorem \ref{Thm1} are applied. 
\begin{example}\label{Ex2}
Let $m$ be a positive integer and $\chi(\tau) \in C^m([0,1])$ satisfy 
\begin{equation*}
  \max_{\tau \in[0,1]}\{|\chi(\tau)|\} = 1, 
  \;\;
  \int^1_0 \chi(\tau)\,d\tau=0 
  \; \text{ and } \;
  |\chi^{(k)}(0)|=|\chi^{(k)}(1)|=0
  \;\; (k=0,\ldots,m). 
\end{equation*}
For $p \ge -1$, $q>1$, $r \ge 1$ and $h \ge 0$, 
we define 
$\{t_n\}_{n=1}^\infty$, $\{N_n\}_{n=1}^\infty$ and $\si_2(t)\in C^m(\R_+)$ by 
\[
  t_n = n^r, 
  \;\;
  N_n = \lfloor n^h \rfloor
\]
and 
\begin{equation}\label{si2}
\si_2(t)=
\begin{cases}
  t_n^p \chi\(t_n^{q-1} (t-t_n) \), 
    & t \in [t_n, t_n + N_n t_n^{-q+1}] \;\;(n=1,2,\ldots),\\[1mm]
  0, 
  & t \in \R_+ \setminus \bigcup_{n=1}^\infty 
  [t_n, t_n + N_n t_n^{-q+1}], 
\end{cases}
\end{equation}
where it must be satisfied for $t_n + N_n t_n^{-q+1} \le t_{n+1}$ that 
\begin{equation*}
  h \le rq - 1.
\end{equation*}
Let $b(t)$ be given by \eqref{mu0-si} with $\si(t)=\si_2(t)$. 
Then, by Lemma \ref{lemm1-Ex2} and Lemma \ref{lemm2-Ex2}, 
there exists a positive constant $C_1$ such that 
\eqref{stb} and \eqref{Cm} are valid 
for $\Th(t)=C_1\Th_2(t)$ and $\varXi(t)=C_1\varXi_1(t)$ with 
\begin{equation}\label{al2}
  \Th_2(t) = (1+t)^{-\al_2}, \;\;
  \al_2 = \al_1 - q + \frac{h+1}{r} = p-2q+2+\frac{h+1}{r}. 
\end{equation}
Here we note that $p$ and $q$ in \eqref{si2} are parameters describing the amplitude and oscillating speed of $\si_2(t)$, respectively, corresponding to those in $\si_1(t)$ defined in \eqref{si-sin}, because \eqref{Cm} is valid for the same $\varXi(t)$ as in Example \ref{Ex1}. 
The new parameters $r$ and $h$ describe the time when the oscillation speed switches and the length of the oscillation intervals, respectively. 
If the following inequality holds: 
\begin{equation*}
  \al_2 m - \be_1 (m+1) + 1 \le 0, 
\end{equation*}
that is, 
\begin{equation*}
  p \le p_2 = p_2(q,r,h,m):=-1 + \frac{m}{m+1}\(q-\frac{h+1}{r}\), 
\end{equation*}
then \eqref{XiTh-thm0} and \eqref{ass0} are valid by Lemma \ref{lemm3-Ex2}, and thus Theorem \ref{Thm0} can be applicable. 
For $\rho(r)=\rho_\nu(r)$, if 
\begin{equation*}
  \al_2 m - \be_1 (m+1)+ 1 + \frac{\al_2(m+1)}{\nu-1} \le 0, 
\end{equation*}
that is, 
\begin{equation*}
\begin{split}
  p \le \tilde{p}_2=\tilde{p}_2(q,r,h,m,\nu):&= 
  -1+\frac{q-1}{\nu} + \(\frac{m}{m+1}+\frac{1}{\nu(m+1)}\)\(q-\frac{h+1}{r}\)
\\
  &=\begin{dcases} 
  \tilde{p}_1 + \(\frac{m}{m+1}+\frac{1}{\nu(m+1)}\)\(q-\frac{h+1}{r}\), 
  \\[1mm]
  p_2 + \frac{q-1}{\nu} + \frac{1}{\nu(m+1)}\(q-\frac{h+1}{r}\),
\end{dcases}
\end{split}
\end{equation*}
then \eqref{XiTh-thm1} and \eqref{ass1} are valid by Lemma \ref{lemm4-Ex2}, and thus Theorem \ref{Thm1} can be applicable. 
\end{example}

The maximum value of $p$ for the amplitude parameter of $\si(t)$ in Example \ref{Ex1}, Example \ref{Ex11} and Example \ref{Ex2} can be summarized as follows: 

\begin{center}
{\renewcommand\arraystretch{2}
\begin{tabular}{|c|c|c|}
\hline
  & Theorem \ref{Thm0} & Theorem \ref{Thm1} 
\\[0.3mm]
\hline
  $\si_1(t)$ & $p_1=-1$ & $\tp_1 = -1 + \frac{q-1}{\nu}$ 
\\[0.7mm]
\hline
  $\si_2(t)$ & $p_2=-1 + \frac{m}{m+1}\(q-\frac{h+1}{r}\) $ & 
  $\tp_2=-1+\frac{q-1}{\nu} + \(\frac{m}{m+1}+\frac{1}{\nu(m+1)}\)\(q-\frac{h+1}{r}\)$
\\[0.7mm]
\hline
\end{tabular}
}
\end{center}

\begin{itemize}
\item
If $\si(t)=\si_1(t)$ with $p=\tp_1$ or $\si(t)=\si_2(t)$ with $p=p_2$ or $p=\tp_2$, then the following hold: 
\begin{equation*}
  \limsup_{t\to\infty} 
  \frac{\si(t)}{\mu_0(1+t)^{-1}}=\infty. 
\end{equation*}
Thus $\si(t)$ is no longer small perturbation in the sense of $L^\infty$. 
\item
$\tp_1$, $p_2$ and $\tp_2$ are monotone increasing with respect to $q$, that is, faster oscillation allows for larger amplitudes. 
The contribution of such fast oscillations for the energy decay estimate is also suggested in \cite{GG25} for $\si_1(t)$ with $p=-1$. 
\item 
$\tilde{p}_1$ and $\tilde{p}_2$ are strictly decreasing with respect to $\nu$, and the following hold: 
\[
  \lim_{\nu\to\infty}\tilde{p}_1=p_1
  \;\text{ and }\;
  \lim_{\nu\to\infty}\tilde{p}_2=p_2. 
\]
\item 
If $h<rq-1$, then $p_2$ and $\tilde{p}_2$ are monotone increasing with respect to $m$. 
On the other hand, 
\[
  p_2|_{h=rq-1}=p_1 \;\text{ and }\; \tilde{p}_2|_{h=rq-1}=\tilde{p}_1, 
\]
which implies that the contribution of $m$ cannot be expected if $\si_2(t)$ is oscillating without rest. 
\end{itemize}

Actually, the space of initial data $\cH(\rho_\nu,\infty)$ was introduced to consider the well-posedness of the initial value problem for the wave equation \eqref{uw} with coefficient that has a singularity such as degeneracy and non-Lipschitz continuity (\cite{CDS79, CJS83}). 
Indeed, such singularity of the coefficients may cause loss of regularity of the solution as time evolves, so the initial data should be sufficiently smooth functions, such as functions in the Gevrey class. 
The $L^2$ well-posedness for the Cauchy problem \eqref{u} is trivial since $b(t)\in C^1(\R_+)$, hence it is not necessary to introduce the initial data in $\cH(\rho_\nu,\infty)$ for the well-posedness. 
On the other hand, Theorem \ref{Thm1} is a result showing the contribution of taking initial data in $\cH(\rho_\nu,\infty)$ for the energy decay estimate, and no such result exists as far as the authors know. 
Incidentally, the contribution of the initial data in the Gevrey class for the asymptotic stability of the energy for the Cauchy problem of \eqref{uw} with non-singular coefficient is studied in \cite{EFH15}.  

%
\section{Proof of theorems}
%
\subsection{Outline of the proof}
Before starting the proof of the main theorems, we introduce an outline of the proof and a key proposition. 

By partial Fourier transformation with respect to the spatial variables and denoting 
\[
  v(t,\xi)=\hat{u}(t,\xi),\;\;
  v_0(\xi)=\hat{u}_0(\xi)
  \;\text{ and }\;
  v_1(\xi)=\hat{u}_1(\xi),
\]
\eqref{u} is rewritten as the following Cauchy problem: 
\begin{equation}\label{v}
\begin{dcases}
  \(\pa_t^2 + |\xi|^2 + b(t) \pa_t\)v(t,\xi) = 0, 
  & (t,\xi) \in (0,\infty) \times \R^n,\\
  v(0,\xi) = v_0(\xi),\;\;
  \pa_t v(0,\xi) = v_1(\xi), 
  & \xi \in \R^n. 
\end{dcases}
\end{equation}
By Parseval's theorem, it is sufficient to consider the problem of having a uniform estimate with respect $\xi$ for the following energy density function in the frequency domain: 
\begin{equation*}
  \cE(t,\xi)=\cE(t,\xi;v):= |\xi|^2 |v(t,\xi)|^2 + |\pa_t v(t,\xi)|^2,\;\;
  (t,\xi)\in [0,\infty)\times \R^n. 
\end{equation*}

Since our interest is the asymptotic behavior as $t \to \infty$, we may assume the following without loss of generality: 
\begin{equation}\label{mu(0)=mu0}
  \mu(0)=\mu_0
  \;\text{ and }\;
  \ze^{-1}(\Th(0))=1. 
\end{equation}
Let $N$ be a large constant. 
We define $t_D=t_D(r)$ and $t_H=t_H(r)$ for $r \in \R_+$ by 
\begin{equation}\label{tDtH}
  t_D(r):=\mu^{-1}\(\frac{\mu_0 r}{N}\)
  \;\;\text{ and }\;\;
  t_H(r):=\Th^{-1}\(\ze\(\frac{r}{N}\)\). 
\end{equation}
Then we divide 
$\R_+ \times \R^n = \{(t,\xi)\;;\; t \in \R_+,\;\; \xi \in \R^n\}$ 
into the following four regions by hypersurfaces 
$t=t_D(|\xi|)$ and $t=t_D(|\xi|)$: 
\[
  Z_D=Z_D(N):=\left\{ (t,\xi)\;;\; 0 \le t \le t_D(|\xi|) \right\},
\]
\[
  Z_H=Z_H(N):=\left\{ (t,\xi)\;;\; 0 \le t \le t_H(|\xi|) \right\},
\]
\[
  Z_{I1}=Z_{I1}(N):=\left\{ (t,\xi)\;;\; t \ge t_D(|\xi|),\; |\xi| \le N\right\}
\]
and
\[
  Z_{I2}=Z_{I2}(N):=
  \left\{ (t,\xi)\;;\; t \ge t_H(|\xi|),\; |\xi| \ge N \right\},
\]
where $Z_D$, $Z_H$, and $Z_I:=Z_{I1} \cup Z_{I2}$ are called 
dissipative zone, hyperbolic zone, and intermediate zone respectively. 

Our main concern in this paper is the effect of the oscillating part $\si(t)$ of the dissipative coefficient on the energy decay estimate. 
The weighted energy method used, for example, in \cite{MN01}, is a powerful method that can be applied to cases where initial boundary value problems with exterior domain and the dissipative coefficients depend on spatial variables, 
but is currently insufficient for precise analysis in models where the dissipative coefficients depend only on $t$.
On the other hand, it is effective to estimate $\cE(t,\xi)$ in each of the divided regions as above when the dissipative coefficient has a different effect on the energy at each region as in our model. 

The following proposition is key to the proof of our main theorems.

\begin{proposition}\label{Prop1} 
Let us denote 
\begin{equation*}
  \vrho(r):=
  \begin{cases}
    1 & \text{ for Theorem \ref{Thm0}}, \\
    \rho(r) & \text{ for Theorem \ref{Thm1}}. 
  \end{cases}
\end{equation*}
There exist positive constants $N$ and $\ka_0$ such that the following estimates are established: 
\begin{itemize}
\item[(i)] 
For $(t,\xi) \in Z_D(N)$ we have 
\begin{equation*}
  \cE(t,\xi)
  \lesssim 
  \exp\(-2\int^t_0 \mu(\tau)\,d\tau\)
  \(|v_0(\xi)|^2 + |v_1(\xi)|^2 \).
\end{equation*}
\item[(ii)] 
For $(t,\xi) \in Z_H(N)$ we have 
\begin{equation*}
  \cE(t,\xi)
  \lesssim 
  \exp\(-\int^t_0 \mu(\tau)\,d\tau\)
  \exp\(\ka_0 \vrho\(|\xi|\)\)
  \cE(0,\xi).
\end{equation*}
\item[(iii)] 
For $(t,\xi) \in Z_{I1}(N)$ we have 
\begin{equation*}
  \cE(t,\xi)
  \lesssim 
  \exp\(-\int^t_{t_D(|\xi|)} \mu(\tau)\,d\tau\)
  \cE(t_D(|\xi|),\xi).
\end{equation*}
\item[(iv)] 
For $(t,\xi) \in Z_{I2}(N)$ we have 
\begin{equation*}
  \cE(t,\xi)
  \lesssim 
  \exp\(-\int^t_{t_H(|\xi|)} \mu(\tau)\,d\tau\)
  \exp\(\ka_0 \vrho\(|\xi|\)\)
  \cE(t_H(|\xi|),\xi).        
\end{equation*}
\end{itemize}
\end{proposition}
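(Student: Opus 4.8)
The plan is to regard \eqref{v}, for each fixed $\xi$, as a scalar second order ODE in $t$ and to estimate the energy density $\cE(t,\xi)$ zone by zone, the boundaries $t=t_D(|\xi|)$ and $t=t_H(|\xi|)$ in \eqref{tDtH} being precisely the thresholds at which the qualitative behaviour of the solution changes. The identity used everywhere is obtained by multiplying the equation in \eqref{v} by $\ol{\pa_t v}$ and taking real parts,
\begin{equation*}
  \frac{d}{dt}\cE(t,\xi) = -2b(t)\,|\pa_t v(t,\xi)|^2,
\end{equation*}
combined with the splitting $b=\mu+\si$ from $\mathrm{(A1)}$--$\mathrm{(A2)}$ and the fact that $\int_0^\infty\si\,dt$ converges, so that $\exp(-\int_0^t b)\simeq\exp(-\int_0^t\mu)$.

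For (i), on $Z_D(N)$ the definition of $t_D$ gives $\mu(t)\ge\mu(t_D(|\xi|))=\mu_0|\xi|/N$, hence $|\xi|\lesssim\mu(t)$, and the characteristic roots are real (overdamped regime). I would treat $|\xi|^2 v$ as a perturbation of the pure damped ODE $v''+b\,v'=0$: its fast mode makes $\pa_t v$ decay like $\exp(-\int_0^t b)$, while $v$ stays bounded because $\int_0^t\exp(-\int_0^s b)\,ds$ is bounded, and a Duhamel/Gronwall argument reinstates $|\xi|^2 v$. The point is then that $|\xi|\lesssim\mu(t)$ together with $\mu_0<1$ (from $\mathrm{(A1)}$, which forces $\mu(t)\le\mu_0(1+t)^{-1}$ and hence $\mu(t)^2\lesssim\exp(-2\int_0^t\mu)$) converts the potential term into $|\xi|^2|v|^2\lesssim\exp(-2\int_0^t\mu)(|v_0|^2+|v_1|^2)$, which is the extra factor of $2$ in the exponent of (i) and the reason $v_0$ appears without the $|\xi|^2$ weight.

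For (ii)--(iv) the zones are oscillatory: on $Z_H(N)$ one has $|\xi|\ge N\ze^{-1}(\Th(t))$, which by \eqref{XiTh-thm1} and $|b|\le\varXi$ (the case $k=0$ of \eqref{Cm}) forces $|\xi|\gtrsim b(t)$, so the characteristic roots are complex. Here I would diagonalise the first order system for ${}^{t}(|\xi|v,\,\pa_t v)$: in the eigenbasis of the principal part the diagonal entries carry the damping $-b/2$, which integrates to $\exp(-\int_0^t b)\simeq\exp(-\int_0^t\mu)$ and produces the factor $\exp(-\int\mu)$ in each estimate. Running the step-by-step diagonalisation $m$ times, each step using one further derivative of $b$ through \eqref{Cm}, leaves a remainder with $\|R_m\|\lesssim\varXi^{m+1}|\xi|^{-m}$; integrating over the zone and inserting the boundary relation $\ze^{-1}(\Th(t_H))=|\xi|/N$ turns $\int_0^{t_H}\varXi^{m+1}\,ds$, controlled by \eqref{ass1}, into $|\xi|^{-m}\int_0^{t_H}\varXi^{m+1}\,ds\lesssim\rho(|\xi|)$, which is exactly the loss $\exp(\ka_0\varrho(|\xi|))$ with $\varrho=\rho$. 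On $Z_{I1}(N)$ the frequency is bounded, $|\xi|\le N$, so the same remainder integral is merely a constant and no weight survives, which is why (iii) has no exponential factor; for Theorem \ref{Thm0} one takes $\varrho\equiv1$ and replaces \eqref{XiTh-thm1}, \eqref{ass1} by \eqref{XiTh-thm0}, \eqref{ass0}. Estimate (iv) is obtained by the same diagonalisation started from $t=t_H(|\xi|)$ rather than from $0$, the propagation up to $t_H$ being supplied by (ii).

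The genuine difficulty is the oscillating part $\si(t)$, whose amplitude is allowed to be large, so that $\int|\si|$ and $\int|\si'|$ cannot be used crudely. The device I would rely on is the stabilisation estimate \eqref{stb}: writing $S(s):=\int_s^\infty\si(\tau)\,d\tau$ and integrating by parts, each occurrence of $\si$ in the diagonal and in the remainder is transferred onto the slowly varying symbol, and the second order smallness $\int_t^\infty|S(s)|\,ds\le\Th(t)^{-1}$ furnishes exactly the decay that pairs with $\varXi$ and $\Th$ through \eqref{XiTh-thm1} and \eqref{ass1} (respectively \eqref{XiTh-thm0} and \eqref{ass0}). Reconciling this integration by parts with the $m$-fold diagonalisation, and then checking that the total remainder accumulated over $Z_H\cup Z_{I2}$ is bounded uniformly in $\xi$ by a single $\ka_0\,\varrho(|\xi|)$ — so that the weight in the norm of $\cH(\rho,\ka_0)$ absorbs it — is the delicate bookkeeping on which the whole proof turns.
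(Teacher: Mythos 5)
Your overall architecture --- the four-zone decomposition, a Duhamel/Volterra perturbation argument in $Z_D$, an $m$-fold diagonalization with remainder $\lesssim\varXi^{m+1}|\xi|^{-m}$ in $Z_H$, and the stabilization condition \eqref{stb} to neutralize $\si$ in the intermediate zone --- is exactly the paper's. But two of your intermediate steps are false as stated. First, in $Z_D$ you claim $v$ stays bounded ``because $\int_0^t\exp(-\int_0^s b)\,ds$ is bounded''; it is not: by (A1) and \eqref{etamu-etab} this integral is $\simeq\int_0^t(1+s)^{-\mu_0}\,ds\simeq(1+t)^{1-\mu_0}\to\infty$ since $\mu_0<1$. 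What saves the estimate is not boundedness of this integral but the smallness of $|\xi|$ in $Z_D$: the paper's Lemma \ref{lemm-eta|xi|} shows $|\xi|\exp\(\int_0^t\mu\)\int_0^t\exp\(-\int_0^s\mu\)\,ds\le N/(1-\mu_0)$, precisely because $(1+t)\mu(t)\le\mu_0$, and it is this combination that yields $|v_{12}(t,\xi)|\lesssim\exp\(-\int_0^t\mu\)$ and hence the factor $2$ in the exponent of (i). Your final mechanism ``$|\xi|\lesssim\mu(t)$'' is the right one, but the bound on the $v_1$-contribution to $|\xi|v$ requires the integrated inequality above, not boundedness of $v$.

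Second, your claim that (iv) ``is obtained by the same diagonalisation started from $t=t_H(|\xi|)$'' cannot work: the remainder of the $m$-fold scheme is controlled by \eqref{ass1} only up to the hypersurface $t=t_H(|\xi|)$, where $\ze^{-1}(\Th(t_H))=|\xi|/N$; for $t>t_H(|\xi|)$ one has $\ze^{-1}(\Th(t))\gg|\xi|$ and $|\xi|^{-m}\int_0^t\varXi(s)^{m+1}\,ds\lesssim\ze^{-1}(\Th(t))^{m+1}/(|\xi|^m\Th(t))$ is unbounded in $t$. The same objection applies to your remark that in $Z_{I1}$ ``the remainder integral is merely a constant'': there $|\xi|$ can be as small as $N\mu_0^{-1}\mu(t)$, so $|\xi|^{-m}$ is large and the integral is not controlled. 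The paper's intermediate-zone argument is structurally different: it conjugates by $\diag(1,\om(t))$ with $\om(t)=\exp\(\int_t^\infty\si(s)\,ds\)$, which removes $\si$ from the diagonal exactly and produces off-diagonal terms of size $|\xi|\,|1-\om(t)|\lesssim|\xi|\,|\int_t^\infty\si|$; a single further diagonalization of the $\mu$-part then leaves a remainder whose integral over $[t_D(|\xi|),\infty)$ resp.\ $[t_H(|\xi|),\infty)$ is bounded by $1$ resp.\ $\vrho(|\xi|)$ thanks to the double-integral condition \eqref{stb}. Your closing paragraph does gesture at exactly this device (integration by parts against $\int_s^\infty\si$), so what is missing is not the idea but the realization that in $Z_I$ it must replace, not supplement, the $C^m$-diagonalization.
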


In the proof of Proposition \ref{Prop1} below, we consider the following first order system, which is rewritten from the second order differential equation of \eqref{v}: 
\begin{equation}\label{V}
  \pa_t V(t,\xi) = A(t,\xi) V(t,\xi),
\end{equation}
where 
\begin{equation}\label{A}
  A(t,\xi):=\begin{pmatrix} 0 & i|\xi| \\ i|\xi| & -b(t) \end{pmatrix}
  \; \text{ and }\;
  V(t,\xi):=\begin{pmatrix} i|\xi|v(t,\xi) \\ \pa_t v(t,\xi) \end{pmatrix}.
\end{equation}
Then the following equality holds: 
\begin{equation}\label{cE=|V|}
  \|V(t,\xi)\|^2_{\C^2} = \cE(t,\xi;v). 
\end{equation}

\subsection{Proof of Proposition \ref{Prop1} (i)} 
We note that the following inequality holds in $Z_D(N)$: 
\begin{equation}\label{txi-ZD}
  |\xi| \le \frac{N}{\mu_0}\mu(t). 
\end{equation}
Since $|\xi| = N\mu_0^{-1}\mu(t_D(|\xi|)) \le N\mu_0^{-1}\mu(0) = N$ in $Z_D(N)$ by (A1), \eqref{mu(0)=mu0} and \eqref{tDtH}, the effect of the oscillation of $\si(t)$ on high frequency energy need not be considered. 
On the other hand, a precise analysis using the representation of the solution of \eqref{v} as a Volterra integral equation is effective and essential in order to derive the decay estimate of $\cE(t,\xi)$ in $Z_D(N)$. 
The method for evaluating $\cE(t,\xi)$ in $Z_D(N)$ follows that used in \cite{HW08}.

We define $\eta_b(t)$, $\eta_\mu(t)$ and $\om(t)$ on $\R_+$ by 
\[
  \eta_b(t):=\exp\(-\int^t_0 b(s)\,ds\), \;\;
  \eta_\mu(t):=\exp\(-\int^t_0 \mu(s)\,ds\)
\]
and
\[
  \om(t):=\exp\(\int_t^\infty \si(s)\,ds\). 
\]
Then we see that 
\begin{equation*}
  \om(t) = \om(0) \exp\(-\int_0^t \si(s)\,ds\)
 =\om(0)\frac{\eta_b(t)}{\eta_\mu(t)}. 
\end{equation*}
Moreover, denoting 
\[
  \om_0:=\inf_{t \ge 0}\{\om(t)\}
  \;\text{ and }\;
  \om_1:=\sup_{t \ge 0}\{\om(t)\}, 
\]
we have
\begin{equation}\label{etamu-etab}
  \frac{\om_0}{\om(0)}\eta_\mu(t)
  \le \eta_b(t) 
  \le \frac{\om_1}{\om(0)}\eta_\mu(t). 
\end{equation}

\begin{lemma}\label{lemm-eta|xi|}
The following inequalities hold in $Z_D(N)$: 
\begin{equation}\label{eta|xi|}
  |\xi|\eta_\mu(t)^{-1} < N
\end{equation}
and
\begin{equation}\label{eta|xi|int}
  |\xi|\eta_\mu(t)^{-1}\int^t_0 \eta_\mu(s)\,ds < \frac{N}{1-\mu_0}. 
\end{equation}
\end{lemma}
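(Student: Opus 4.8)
The plan is to derive both inequalities from two elementary facts that hold throughout $Z_D(N)$. The first is the non-effectiveness bound $\mu(t)\le\mu_0(1+t)^{-1}$, coming from (A1) together with the normalization \eqref{mu(0)=mu0}; integrating it gives the one-sided control $\eta_\mu(t)^{-1}=\exp(\int_0^t\mu(s)\,ds)\le(1+t)^{\mu_0}$. The second is the zone bound \eqref{txi-ZD}, which I rewrite as $|\xi|\le N\mu_0^{-1}\mu(t)\le N(1+t)^{-1}$. With these in hand the two assertions become short computations.

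For \eqref{eta|xi|} I would simply multiply the two bounds evaluated at the same time $t$: $|\xi|\eta_\mu(t)^{-1}\le N(1+t)^{\mu_0}(1+t)^{-1}=N(1+t)^{\mu_0-1}$, which does not exceed $N$ precisely because $\mu_0<1$ forces a non-positive exponent. Strictness holds for $t>0$; at the corner $t=0$ one has $|\xi|\eta_\mu(0)^{-1}=|\xi|$, so that \eqref{txi-ZD} at $t=0$ already gives the bound.

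The inequality \eqref{eta|xi|int} is where the real work lies, since a crude application of \eqref{eta|xi|} would leave the possibly large factor $\int_0^t\eta_\mu(s)\,ds$ uncontrolled. The key step is to cancel the growth of $\eta_\mu(t)^{-1}$ against the decay of $\eta_\mu(s)$ inside the integral by writing
\begin{equation*}
  \eta_\mu(t)^{-1}\int_0^t\eta_\mu(s)\,ds
  =\int_0^t\exp\(\int_s^t\mu(\tau)\,d\tau\)\,ds,
\end{equation*}
and then estimating the inner integral again through $\mu(\tau)\le\mu_0(1+\tau)^{-1}$, which gives $\exp(\int_s^t\mu)\le((1+t)/(1+s))^{\mu_0}$. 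Consequently the right-hand side is bounded by
\begin{equation*}
  (1+t)^{\mu_0}\int_0^t(1+s)^{-\mu_0}\,ds
  =\frac{(1+t)-(1+t)^{\mu_0}}{1-\mu_0}
  <\frac{1+t}{1-\mu_0}.
\end{equation*}
Multiplying by $|\xi|\le N(1+t)^{-1}$ collapses the factor $1+t$ and yields exactly $N/(1-\mu_0)$. I expect this cancellation, together with the appearance of the constant $1/(1-\mu_0)$ from $\int(1+s)^{-\mu_0}\,ds$, to be the main obstacle; note that the finiteness of that constant is precisely where the non-effectiveness hypothesis $\mu_0<1$ is indispensable.

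The only point requiring care is the passage from the literal form of (A1), in which the supremum is taken over $t\ge T$, to the bound $\mu(t)\le\mu_0(1+t)^{-1}$ on all of $[0,\infty)$. I would handle this using the normalization $\mu(0)=\mu_0$ and the monotonicity $\mu'<0$, so that the supremum defining $\mu_0$ is attained at $t=0$ and the desired pointwise inequality holds on the whole half-line; the remaining steps are direct integrations.
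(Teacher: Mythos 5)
Your proposal is correct and follows essentially the same route as the paper: both inequalities are obtained from the zone bound $|\xi|\le N\mu_0^{-1}\mu(t)$ together with $\mu(\tau)\le\mu_0(1+\tau)^{-1}$, and the second one via the same rewriting $\eta_\mu(t)^{-1}\int_0^t\eta_\mu(s)\,ds=\int_0^t\exp(\int_s^t\mu(\tau)\,d\tau)\,ds\le(1+t)^{\mu_0}\int_0^t(1+s)^{-\mu_0}\,ds<(1+t)/(1-\mu_0)$ followed by multiplication by $|\xi|$. The only cosmetic difference is that the paper keeps the factor $\mu(t)$ explicit until the last step rather than replacing $|\xi|$ by $N(1+t)^{-1}$ at the outset; the estimates are identical.
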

\begin{proof}
By (A1) and \eqref{txi-ZD}, we have 
\begin{align*}
  |\xi|\eta_\mu(t)^{-1}
 \le \frac{N}{\mu_0} \mu(t) \exp\(\int^t_0 \mu_0(1+s)^{-1}\,ds \) 
 = \frac{N}{\mu_0} \mu(t) (1+t)^{\mu_0}
 < N.
\end{align*}
Moreover, we have 
\begin{align*}
  \eta_\mu(t)^{-1}\int^t_0 \eta_\mu(s)\,ds
& =\int^t_0 \exp\(\int^t_s \mu(\tau)\,d\tau\)\,ds
\\
& \le 
  \int^t_0 \exp\(\int^t_s \mu_0(1+\tau)^{-1}\,d\tau\)\,ds
\\
& =(1+t)^{\mu_0} \int^t_0 (1+s)^{-\mu_0} \,ds
  < \frac{1+t}{1-\mu_0},
\end{align*}
it follows that
\begin{align*}
  |\xi|\eta_\mu(t)^{-1}\int^t_0 \eta_\mu(s)\,ds
& \le \frac{N\mu(t)(1+t)}{\mu_0(1-\mu_0)}
  \le \frac{N}{1-\mu_0}.
\end{align*}
\end{proof}

We define $v_{jk}(t,\xi)$ ($j,k=1,2$) by 
\begin{equation}\label{v1kv1k}
  v_{1k}(t,\xi)
 :=\de_{1k} + i|\xi| \int^t_0 v_{2k}(\tau,\xi)\,d\tau 
\end{equation}
and
\begin{equation}\label{v2kv1k}
  v_{2k}(t,\xi)
 :=\eta_b(t)\(\de_{k2} 
  +i|\xi| \int^t_0 \eta_b(\tau)^{-1} v_{1k}(\tau,\xi)\,d\tau\), 
\end{equation}
where $\de_{jk}=1$ if $j=k$ and $\de_{jk}=0$ if $j \neq k$. 
Then the following lemmas are established.

\begin{lemma}\label{lemm-vjk}
The solution of \eqref{V} is represented in $Z_D(N)$ as follows
\begin{equation*}
  V(t,\xi)
 =\begin{pmatrix} 
    v_{11}(t,\xi) & v_{12}(t,\xi) \\ v_{21}(t,\xi) & v_{22}(t,\xi)
  \end{pmatrix}
  V(0,\xi). 
\end{equation*}
Moreover, $v_{1k}(t,\xi)$ is a solution of the following integral equation: 
\begin{equation}\label{v11}
\begin{split}
  v_{1k}(t,\xi)
& =\de_{1k} + i \de_{k2} |\xi|\int^t_0\eta_b(\tau_1)\,d\tau_1
\\
&\quad -|\xi|^2 \int^t_0 \eta_b(\tau_1)^{-1}
  \(\int^t_{\tau_1}\eta_b(\tau_2)\,d\tau_2\)
  v_{1k}(\tau_1,\xi)\,d\tau_1
\end{split}
\end{equation}
for $k=1,2$. 
\end{lemma}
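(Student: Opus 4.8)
The plan is to recognize the column $(v_{1k},v_{2k})^t$ as the $k$-th column of the fundamental matrix of the linear system \eqref{V}, and to establish this by reading off the component equations of \eqref{V} and rewriting each of them in integral form. First I would record the component form of \eqref{V}. Writing $V=(V_1,V_2)^t$ with $V_1=i|\xi|v$ and $V_2=\partial_t v$, the matrix \eqref{A} gives $\partial_t V_1 = i|\xi|V_2$ and $\partial_t V_2 = i|\xi|V_1 - b(t)V_2$; the latter is just the equation of \eqref{v} rewritten, since $\partial_t^2 v = -|\xi|^2 v - b(t)\partial_t v$ and $-|\xi|^2 v = i|\xi|(i|\xi|v)$. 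Because $b\in C^m(\R_+)$ with $m\ge1$, the coefficient matrix $A(t,\xi)$ is continuous in $t$, so the linear system admits a unique fundamental matrix $\Phi(t,\xi)$ with $\Phi(0,\xi)=I$, its $k$-th column solving the system with initial value $(\delta_{1k},\delta_{k2})^t$, and $V(t,\xi)=\Phi(t,\xi)V(0,\xi)$. It then remains to identify the entries of $\Phi$ with the $v_{jk}$ defined by \eqref{v1kv1k}--\eqref{v2kv1k}.

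Next I would turn each scalar equation into its integral form. Integrating $\partial_t V_1 = i|\xi|V_2$ from $0$ to $t$ and using $V_1(0)=\delta_{1k}$ yields precisely \eqref{v1kv1k}. For the second component I would use the integrating factor $\eta_b(t)^{-1}$: since $\eta_b'(t)=-b(t)\eta_b(t)$, one has $\partial_t\!\big(\eta_b(t)^{-1}V_2\big)=\eta_b(t)^{-1}\big(\partial_t V_2 + b(t)V_2\big)=i|\xi|\eta_b(t)^{-1}V_1$, and integrating from $0$ to $t$ with $V_2(0)=\delta_{k2}$ and $\eta_b(0)=1$ gives exactly \eqref{v2kv1k}. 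Thus \eqref{v1kv1k}--\eqref{v2kv1k} is merely the ODE system recast as a coupled Volterra system; its kernels are continuous, so its solution is unique, and therefore the entries of $\Phi$ coincide with the $v_{jk}$. This proves the representation formula.

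Finally, for the \emph{moreover} assertion I would substitute \eqref{v2kv1k} into the right-hand side of \eqref{v1kv1k} and expand. The term carrying $\delta_{1k}$ reproduces the first term of \eqref{v11}, and the $\delta_{k2}$-term reproduces $i\delta_{k2}|\xi|\int_0^t\eta_b(\tau_1)\,d\tau_1$ directly. The remaining contribution is the double integral $(i|\xi|)^2\int_0^t \eta_b(\tau_1)\big(\int_0^{\tau_1}\eta_b(\tau_2)^{-1}v_{1k}(\tau_2)\,d\tau_2\big)\,d\tau_1$, to which I would apply Fubini's theorem over the triangle $0\le\tau_2\le\tau_1\le t$; exchanging the order of integration and relabelling the dummy variables turns it into $-|\xi|^2\int_0^t \eta_b(\tau_1)^{-1}\big(\int_{\tau_1}^t\eta_b(\tau_2)\,d\tau_2\big)v_{1k}(\tau_1)\,d\tau_1$, which is exactly the last term of \eqref{v11}. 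The only step demanding care is this order-of-integration swap together with the relabelling of the integration variables; the rest is a direct computation, so I do not anticipate any substantive obstacle beyond this bookkeeping.
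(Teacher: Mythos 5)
Your proof is correct and follows essentially the same route as the paper's: the paper defines $v_{jk}$ by \eqref{v1kv1k}--\eqref{v2kv1k} and verifies by differentiation that the resulting matrix solves \eqref{V} with the identity as initial value, which is exactly your integrating-factor computation read in the reverse direction, the identification then resting on uniqueness for the ODE initial value problem rather than for the Volterra system. Your explicit Fubini exchange over the triangle $0\le\tau_2\le\tau_1\le t$ in the \emph{moreover} part is precisely the substitution of \eqref{v2kv1k} into \eqref{v1kv1k} that the paper leaves implicit, and it is carried out correctly.
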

\begin{proof}
By \eqref{v1kv1k} and \eqref{v2kv1k} we have 
$v_{1k}(0,\xi)=\de_{1k}$, $v_{2k}(0,\xi)=\de_{2k}$, 
\begin{align*}
  \pa_t v_{1k}
 =i\de_{k2}|\xi|\eta_b(t)
 -|\xi|^2 \eta_b(t) \int^t_0 \eta_b(\tau)^{-1} v_{1k}(\tau,\xi)\,d\tau
 =i|\xi|v_{2k}
\end{align*}
and
\begin{align*}
  \pa_t v_{2k}
& =\eta_b'(t)\(\de_{k2} 
  +i|\xi| \int^t_0 \eta_b(\tau)^{-1} v_{1k}(\tau,\xi)\,d\tau\)
  +i|\xi| v_{1k}(t,\xi)
\\
& =-b(t)v_{2k} + i|\xi| v_{1k}. 
\end{align*}
It follows that
\begin{align*}
  \pa_t \begin{pmatrix} v_{11} & v_{12} \\ v_{21} & v_{22} \end{pmatrix}
  V(0,\xi)
&=\begin{pmatrix}
  i|\xi|v_{21} & i|\xi|v_{22} \\ 
  -b(t)v_{21} + i|\xi| v_{11} & -b(t)v_{22} + i|\xi| v_{12}
  \end{pmatrix}V(0,\xi)
\\
&=\begin{pmatrix} 0 & i|\xi| \\ i|\xi| & -b(t) \end{pmatrix}
  \begin{pmatrix} v_{11} & v_{12} \\ v_{21} & v_{22} \end{pmatrix}V(0,\xi)
\\
&=A(t,\xi)
  \begin{pmatrix} v_{11} & v_{12} \\ v_{21} & v_{22} \end{pmatrix}V(0,\xi).
\end{align*}
\end{proof}

\begin{lemma}\label{Lemm-Volterra}
Let us define 
$w_1(t,\xi)$, $w_2(t,\xi)$, $p_1(t,\xi)$, $p_2(t,\xi)$ and $q(t,\tau,\xi)$ by
\[
  w_1(t,\xi) = |\xi| \eta_b(t)^{-1} v_{11}(t,\xi),\;\;
  w_2(t,\xi) = \eta_b(t)^{-1}v_{12}(t,\xi),
\]
\[
  p_1(t,\xi) = |\xi|\eta_b(t)^{-1} ,\;\;
  p_2(t,\xi) = i|\xi|\eta_b(t)^{-1} \int^t_0\eta_b(\tau_1)\,d\tau_1
\]
and
\[
  q(t,\tau,\xi)=-|\xi|^2\eta_b(t)^{-1} \int^t_{\tau}\eta_b(\tau_1)\,d\tau_1. 
\]
Then $w_k(t,\xi)$ is a solution of the Volterra integral equation \eqref{Vol} with $p(t,\xi)=p_k(t,\xi)$ for $k=1,2$. 
\end{lemma}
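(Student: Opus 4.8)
The plan is to verify the claim by direct substitution, exploiting the fact that $v_{1k}(t,\xi)$ is already known from Lemma \ref{lemm-vjk} to satisfy the integral equation \eqref{v11}. Here I understand \eqref{Vol} to be the linear Volterra integral equation of the second kind
\[
  w(t,\xi) = p(t,\xi) + \int^t_0 q(t,\tau,\xi)\,w(\tau,\xi)\,d\tau,
\]
so that proving the lemma amounts to showing that, after multiplying \eqref{v11} by the scalar factor defining $w_k(t,\xi)$, the free term reproduces $p_k(t,\xi)$ and the double-integral term reproduces $\int^t_0 q(t,\tau,\xi)\,w_k(\tau,\xi)\,d\tau$ with the \emph{same} kernel $q$ for both $k=1$ and $k=2$.

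First I would treat $k=1$. Multiplying \eqref{v11} by $|\xi|\eta_b(t)^{-1}$ and using $\de_{11}=1$, $\de_{12}=0$, the constant term becomes $|\xi|\eta_b(t)^{-1}=p_1(t,\xi)$. For $k=2$ I would instead multiply \eqref{v11} by $\eta_b(t)^{-1}$; then $\de_{12}=0$ kills the constant, while $i\de_{22}|\xi|\eta_b(t)^{-1}\int^t_0\eta_b(\tau_1)\,d\tau_1=p_2(t,\xi)$ is the free term. In each case the remaining double-integral term is handled by inserting the inverse relations $v_{11}(\tau_1,\xi)=|\xi|^{-1}\eta_b(\tau_1)w_1(\tau_1,\xi)$ and $v_{12}(\tau_1,\xi)=\eta_b(\tau_1)w_2(\tau_1,\xi)$ under the integral sign, which follow at once from the definitions of $w_1$ and $w_2$.

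The one point that requires care — and the only thing beyond mechanical rewriting — is checking that the two different scalings produce the same kernel. For $k=1$ the prefactor $|\xi|^3\eta_b(t)^{-1}$ meets the substitution $\eta_b(\tau_1)^{-1}v_{11}(\tau_1,\xi)=|\xi|^{-1}w_1(\tau_1,\xi)$, shedding one power of $|\xi|$; for $k=2$ the prefactor $|\xi|^2\eta_b(t)^{-1}$ meets $\eta_b(\tau_1)^{-1}v_{12}(\tau_1,\xi)=w_2(\tau_1,\xi)$, shedding no power of $|\xi|$. Both therefore collapse to $-|\xi|^2\eta_b(t)^{-1}\int^t_{\tau_1}\eta_b(\tau_2)\,d\tau_2=q(t,\tau_1,\xi)$, so the extra factor of $|\xi|$ built into $w_1$ relative to $w_2$ is precisely what makes the kernel independent of $k$. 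Since the identity is purely algebraic given \eqref{v11}, no convergence or regularity issue arises, and this completes the verification for both $k=1,2$.
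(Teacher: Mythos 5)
Your verification is correct and is exactly the computation the paper has in mind: the paper's proof simply states that the result is trivial from \eqref{v11}, and your multiplication by $|\xi|\eta_b(t)^{-1}$ (resp.\ $\eta_b(t)^{-1}$) together with the back-substitution of $v_{1k}$ in terms of $w_k$ is the intended routine check. Your observation that the extra factor $|\xi|$ in $w_1$ is precisely what makes the kernel $q$ independent of $k$ is a correct and worthwhile explicit remark.
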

\begin{proof}
The proof is trivial by \eqref{v11}. 
\end{proof}

Proposition \ref{Prop1} (i) is immediately proved from 
Lemma \ref{lemm-eta|xi|},  Lemma \ref{lemm-vjk} and the following lemma: 
\begin{lemma}\label{lemma-est_vjk}
The estimates
\begin{equation}\label{lemma-est_vjk-eq}
  |v_{j1}(t,\xi)| \lesssim 1
  \;\text{ and }\;
  |v_{j2}(t,\xi)| \lesssim \eta_\mu(t)
\end{equation} 
hold for $j=1,2$ in $Z_D(N)$. 
\end{lemma}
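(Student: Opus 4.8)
The plan is to prove the four bounds in \eqref{lemma-est_vjk-eq} by first treating the first-column/row components $v_{11},v_{12}$ through the scalar integral equation \eqref{v11} supplied by Lemma \ref{lemm-vjk}, and then reading off $v_{21},v_{22}$ directly from the defining relation \eqref{v2kv1k}. Two preliminary facts would organize the whole argument. First I would record that by (A1) the integral $\int_0^\infty\si(s)\,ds$ converges, so $\om(t)=\exp(\int_t^\infty\si(s)\,ds)$ lies between two positive constants $\om_0,\om_1$; hence \eqref{etamu-etab} gives $\eta_b(t)\simeq\eta_\mu(t)$, and throughout I may replace $\eta_b$ by $\eta_\mu$ up to harmless constants. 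Second, and crucially, I would exploit that in $Z_D(N)$ the frequency and time are linked: from \eqref{txi-ZD} and $\sup_{t\ge T}\{(1+t)\mu(t)\}=\mu_0$ one gets $|\xi|(1+t)\le N\mu_0^{-1}\mu(t)(1+t)\le N$, so that $|\xi|t\lesssim N$ and in particular $|\xi|^2t^2\lesssim N^2$ uniformly on $Z_D(N)$. Together with \eqref{eta|xi|} and \eqref{eta|xi|int} from Lemma \ref{lemm-eta|xi|}, these are essentially the only quantitative inputs needed.

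Next I would estimate $v_{1k}$. Using $\eta_b\simeq\eta_\mu$ and monotonicity of $\eta_\mu$, the kernel in \eqref{v11} is controlled by $\eta_b(\tau_1)^{-1}\int_{\tau_1}^t\eta_b(\tau_2)\,d\tau_2\lesssim\eta_\mu(\tau_1)^{-1}\int_{\tau_1}^t\eta_\mu(\tau_2)\,d\tau_2\le t-\tau_1\le t$, while the inhomogeneous term is $1$ for $k=1$ and, for $k=2$, is bounded by $|\xi|\int_0^t\eta_b\lesssim|\xi|\int_0^t\eta_\mu\lesssim\eta_\mu(t)$ thanks to \eqref{eta|xi|int}. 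Inserting these into \eqref{v11} gives, for $k=1$, $|v_{11}(t)|\le 1+C|\xi|^2t\int_0^t|v_{11}(\tau_1)|\,d\tau_1$, whence Gronwall's inequality yields $|v_{11}(t)|\le\exp(C|\xi|^2t^2)\lesssim1$ by the a priori bound $|\xi|^2t^2\lesssim N^2$. For $k=2$, $|v_{12}(t)|\le C\eta_\mu(t)+C|\xi|^2t\int_0^t|v_{12}(\tau_1)|\,d\tau_1$, and the inhomogeneous Gronwall inequality, combined with $e^{C|\xi|^2t^2}\lesssim1$ and $|\xi|^2t\int_0^t\eta_\mu=(|\xi|t)(|\xi|\int_0^t\eta_\mu)\lesssim N\eta_\mu(t)$ (again \eqref{eta|xi|int}), produces $|v_{12}(t)|\lesssim\eta_\mu(t)$. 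Equivalently, one may run this through the Volterra formulation of Lemma \ref{Lemm-Volterra}: rescaling $w_1$ by its own forcing $p_1$ cancels the factor $\eta_b(t)^{-1}$ in the kernel, and the iterated kernel integrals carry a factorial gain whose series sums, again by $|\xi|^2t^2\lesssim N^2$, to a $\xi$-uniform constant.

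Finally, the $j=2$ bounds would follow by substitution into \eqref{v2kv1k}. Using $|v_{11}|\lesssim1$ and the elementary identity $\eta_\mu(t)\int_0^t\eta_\mu(\tau)^{-1}\,d\tau=\int_0^t\exp(-\int_\tau^t\mu)\,d\tau\le t$, I get $|v_{21}(t)|\lesssim\eta_\mu(t)\,|\xi|\int_0^t\eta_\mu(\tau)^{-1}\,d\tau\lesssim|\xi|t\lesssim1$; and using $|v_{12}|\lesssim\eta_\mu$, $|v_{22}(t)|\lesssim\eta_\mu(t)(1+|\xi|\int_0^t\eta_\mu(\tau)^{-1}\eta_\mu(\tau)\,d\tau)=\eta_\mu(t)(1+|\xi|t)\lesssim\eta_\mu(t)$, establishing \eqref{lemma-est_vjk-eq} for all $j,k$. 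The main obstacle I anticipate is that these integral operators are \emph{not} contractions: the kernel in \eqref{v11} has size of order $N^2$, so no naive fixed-point or supremum-bootstrap argument closes for the large constant $N$. What rescues the estimate is the dissipative-zone constraint $|\xi|t\lesssim N$, which caps the effective argument $|\xi|^2t^2$ of the Gronwall exponential (equivalently, of the factorial-gain Volterra series) by $N^2$ and thereby converts a potentially growing bound into a uniform constant; the secondary care is to keep the sharp weight $\eta_\mu(t)$ for the $k=2$ components rather than mere boundedness, and it is precisely \eqref{eta|xi|int} that furnishes this.
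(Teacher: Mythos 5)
Your proposal is correct, and it follows the paper's overall skeleton (bound $v_{11},v_{12}$ from the integral equation \eqref{v11}, then obtain $v_{21},v_{22}$ by substituting into \eqref{v2kv1k}, with \eqref{etamu-etab}, \eqref{eta|xi|}, \eqref{eta|xi|int} and $|\xi|t\le N$ as the quantitative inputs), but it replaces the paper's central technical device by a different one. The paper writes out the full Neumann/Volterra series of Lemma \ref{lemm-Vol} and proves by induction the factorial bounds \eqref{intpqj1}--\eqref{intpqj2} on the iterated kernels, summing the series to the constant $\frac{\om(0)}{\om_0}\exp\bigl(\sqrt{\om_1/\om_0}\,N\bigr)$; this occupies most of the proof. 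You instead bound the kernel crudely by $|q(t,\tau_1,\xi)|\le\frac{\om_1}{\om_0}|\xi|^2(t-\tau_1)$ and close with Gronwall, obtaining $\exp(CN^2)$ rather than $\exp(CN)$ --- immaterial since $N$ is a fixed constant, and considerably shorter. The one point needing care in your version is that the kernel bound depends on the upper limit $t$, so Gronwall in its standard form does not apply verbatim; you should freeze the majorant at $K:=\frac{\om_1}{\om_0}|\xi|^2 t_D(|\xi|)$ (constant in $t$ on $[0,t_D(|\xi|)]$, with $Kt\le CN^2$ there) before invoking the inequality, and for $k=2$ use the inhomogeneous Gronwall form $u(t)\le a(t)+K e^{Kt}\int_0^t a(s)\,ds$ together with $K\int_0^t\eta_\mu\lesssim N^2\eta_\mu(t)$ from \eqref{eta|xi|int} to retain the weight $\eta_\mu(t)$. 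With that adjustment the argument is complete and delivers exactly \eqref{lemma-est_vjk-eq}; what the paper's series approach buys in exchange for its length is the sharper, explicitly tracked $N$-dependence of the constants.
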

\begin{proof}
By \eqref{etamu-etab}, we have 
\begin{equation}\label{|p1||p2|}
  |p_1(t,\xi)| \le \frac{\om(0)}{\om_0}|\xi|\eta_\mu(t)^{-1}
  \;\text{ and }\;
  |p_2(t,\xi)| \le \frac{\om_1}{\om_0}|\xi|\eta_\mu(t)^{-1}
    \int^t_0\eta_\mu(\tau)\,d\tau. 
\end{equation}
Suppose that the following inequalities hold: 
\begin{align*}
\int^t_0|q(t,\tau_1,\xi)|\int^{\tau_1}_0|q(\tau_1,\tau_2,\xi)|
  \cdots\int^{\tau_{l-1}}_0|q(\tau_{l-1},\tau_l,\xi)|
  |p_k(\tau_l,\xi)|\,d\tau_l\cdots d\tau_1 
\end{align*}
\begin{empheq}[left = {\le \empheqlbrace \,}]{alignat = 2}
 & \frac{\om(0)}{\om_0} |\xi|\eta_\mu(t)^{-1}
      \frac{1}{(2l)!}\(\sqrt{\frac{\om_1}{\om_0}} |\xi|t \)^{2l} 
 & \quad \text{ \textit{for} $k=1$}, \label{intpqj1} \\
 & \frac{\om_1}{\om_0}|\xi|\eta_\mu(t)^{-1}\int^t_0\eta_\mu(\tau_1)\,d\tau_1
     \frac{1}{(2l)!}
    \(\sqrt{\frac{\om_1}{\om_0}} |\xi|t \)^{2l} 
 & \quad \text{ \textit{for} $k=2$} \label{intpqj2}
\end{empheq}
for any $l =1,2,\ldots$. 
Then, by Lemma \ref{Lemm-Volterra}, Lemma \ref{lemm-Vol} and $|\xi| t \le N$, 
we have 
\begin{align*}
  |w_1(t,\xi)| 
&\le \frac{\om(0)}{\om_0}|\xi|\eta_\mu(t)^{-1}
  + \sum_{j=1}^\infty \frac{\om(0)}{\om_0}|\xi|\eta_\mu(t)^{-1}
      \frac{1}{(2j)!}\(\sqrt{\frac{\om_1}{\om_0}} N\)^{2j}
\\
&\le \frac{\om(0)}{\om_0}\exp\(\sqrt{\frac{\om_1}{\om_0}} N\)
  |\xi| \eta_\mu(t)^{-1}. 
\end{align*}
Analogously, we have 
\begin{align*}
  |w_2(t,\xi)| 
 \le
  \frac{\om_1}{\om_0}\exp\(\sqrt{\frac{\om_1}{\om_0}} N\)
   |\xi| \eta_\mu(t)^{-1}
   \int^{t}_0\eta_\mu(\tau)\,d\tau.
\end{align*}
Therefore, by \eqref{etamu-etab}, \eqref{eta|xi|int}, \eqref{v2kv1k} and Lemma \ref{Lemm-Volterra} and noting that $\eta_\mu(t)$ is monotone decreasing, 
we have
\begin{align*}
  |v_{11}(t,\xi)|
& \le |\xi|^{-1} \eta_b(t)|w_1(t,\xi)|
 \le \frac{\om(0)}{\om_0} \exp\(\sqrt{\frac{\om_1}{\om_0}} N\)
    \frac{\eta_b(t)}{\eta_\mu(t)}
\\
& \le \frac{\om_1}{\om_0} 
   \exp\(\sqrt{\frac{\om_1}{\om_0}} N\), 
\end{align*}
\begin{align*}
  |v_{21}(t,\xi)|
&\le |\xi| \eta_b(t) \int^t_0 \eta_b(\tau)^{-1}|v_{11}(\tau,\xi)|\,d\tau
\\
&\le 
  \(\frac{\om_1}{\om_0}\)^2 \exp\(\sqrt{\frac{\om_1}{\om_0}} N\)
  |\xi| \eta_\mu(t)\int^t_0 \eta_\mu(\tau)^{-1}\,d\tau
\\
&\le\(\frac{\om_1}{\om_0}\)^2 \exp\(\sqrt{\frac{\om_1}{\om_0}} N\)
  |\xi| t
 \le \(\frac{\om_1}{\om_0}\)^2 \exp\(\sqrt{\frac{\om_1}{\om_0}} N\)
  N 
\end{align*}
and
\begin{align*}
  |v_{12}(t,\xi)|
& =\eta_b(t)|w_2(t,\xi)|
\le \frac{\om_1^2}{\om_0\om(0)} \eta_\mu(t)
  \exp\(\sqrt{\frac{\om_1}{\om_0}} N\)
  |\xi| \eta_\mu(t)^{-1}\int^{t}_0\eta_\mu(\tau)\,d\tau
\\
&\le K \eta_\mu(t), 
\end{align*}
where 
\[
  K = \frac{N\om_1^2}{\om_0\om(0)(1-\mu_0)}
  \exp\(\sqrt{\frac{\om_1}{\om_0}} N\). 
\]
Then, by \eqref{v2kv1k} we have 
\begin{align*}
  |v_{22}(t,\xi)|
&\le \eta_b(t)
  \(1 + |\xi|\int^t_0 \eta_b(\tau)^{-1}|v_{12}(\tau,\xi)|\,d\tau\)
\\
&\le \frac{\om_1}{\om(0)} \(1 + \frac{\om_1}{\om(0)} K |\xi|t\) \eta_\mu(t)
\\
&\le \frac{\om_1}{\om(0)} \(1 + \frac{\om_1}{\om(0)} K N\)\eta_\mu(t). 
\end{align*}
Thus we have \eqref{lemma-est_vjk-eq}. 
Let us prove the inequalities \eqref{intpqj1} and \eqref{intpqj2} by induction. 
By \eqref{etamu-etab}, \eqref{|p1||p2|} and 
\[
  |q(t,\tau,\xi)| 
  \le \frac{\om_1}{\om_0}
  |\xi|^2\eta_\mu(t)^{-1}\int^t_{\tau}\eta_\mu(\tau_1)\,d\tau_1, 
\]
furthermore, since $\eta_\mu(t)$ is monotone decreasing, we have 
\begin{align*}
  \int^t_0 |q(t,\tau_1,\xi)| |p_1(\tau_1,\xi)|\,d\tau_1
& \le
  \frac{\om_1 \om(0)}{\om_0^2}
  |\xi|^3 \eta_\mu(t)^{-1} \int^t_0 \eta_\mu(\tau_1)^{-1}\int^t_{\tau_1}
  \eta_\mu(\tau_2)\,d\tau_2 \,d\tau_1
\\
& \le
  \frac{\om_1\om(0)}{\om_0^2}
  |\xi|^3 \eta_\mu(t)^{-1} \int^t_0 \eta_\mu(\tau_1)^{-1}\int^t_{\tau_1}
  \eta_\mu(\tau_1)\,d\tau_2 \,d\tau_1
\\
& =
  \frac{\om_1 \om(0)}{\om_0^2}
  |\xi|^3 \eta_\mu(t)^{-1} \int^t_0 \int^t_{\tau_1} \,d\tau_2 \,d\tau_1
\\
&=
  \frac{\om(0)}{\om_0}
  |\xi| \eta_\mu(t)^{-1} \frac{1}{2!}
  \(\sqrt{\frac{\om_1}{\om_0}}|\xi|t\)^2. 
\end{align*}
Thus \eqref{intpqj1} is valid for $l=1$. 
Suppose that \eqref{intpqj1} is valid for $l \ge 2$. 
Then we have 
\begin{align*}
& \int^t_0|q(t,\tau_1,\xi)|\int^{\tau_1}_0|q(\tau_1,\tau_2,\xi)|
  \cdots\int^{\tau_{l}}_0|q(\tau_{l},\tau_{l+1},\xi)|
  |p_1(\tau_{l+1},\xi)|\,d\tau_{l+1}\cdots d\tau_1 
\\
&\le \int^t_0
  \frac{\om_1}{\om_0}|\xi|^2 \eta_\mu(t)^{-1} 
  \int^t_{\tau_1}\eta_\mu(\tau_2)\,d\tau_2
  \frac{\om(0)}{\om_0} |\xi|\eta_\mu(\tau_1)^{-1}
      \frac{1}{(2l)!}\(\sqrt{\frac{\om_1}{\om_0}} |\xi|\tau_1\)^{2l}
   d\tau_1 
\\
&= \frac{\om(0)}{\om_0}
  \(\sqrt{\frac{\om_1}{\om_0}} |\xi|\)^{2(l+1)}
  |\xi| \eta_\mu(t)^{-1} 
  \frac{1}{(2l)!} \int^t_0
  \eta_\mu(\tau_1)^{-1} \(\int^t_{\tau_1}\eta_\mu(\tau_2)\,d\tau_2\)
       \tau_1^{2l}\,
   d\tau_1 
\\
&\le \frac{\om(0)}{\om_0}
  \(\sqrt{\frac{\om_1}{\om_0}} |\xi|\)^{2(l+1)}
  |\xi| \eta_\mu(t)^{-1} 
  \frac{1}{(2l)!} \int^t_0
  \eta_\mu(\tau_1)^{-1} \(\int^t_{\tau_1}\eta_\mu(\tau_1)\,d\tau_2\)
       \tau_1^{2l}\,
   d\tau_1 
\\
&\le \frac{\om(0)}{\om_0}
  \(\sqrt{\frac{\om_1}{\om_0}} |\xi|\)^{2(l+1)}
  |\xi| \eta_\mu(t)^{-1} 
  \frac{1}{(2l)!} \int^t_0
  \(\int^t_{\tau_1}\,d\tau_2\)
       \tau_1^{2l}\,
   d\tau_1 
\\
&= \frac{\om(0)}{\om_0} |\xi| \eta_\mu(t)^{-1}
  \frac{1}{(2(l+1))!}
  \(\sqrt{\frac{\om_1}{\om_0}} |\xi| t\)^{2(l+1)}. 
\end{align*}
Therefore, \eqref{intpqj1} is valid for any $l \in \N$. 
On the other hand, by the same way as for $k=1$, we have 
\begin{align*}
&  \int^t_0 |q(t,\tau_1,\xi)| |p_2(\tau_1,\xi)|\,d\tau_1
\\
& \le
  \(\frac{\om_1}{\om_0}\)^2
  \eta_\mu(t)^{-1}|\xi|^3 
  \int^t_0 
  \eta_\mu(\tau_1)^{-1}
  \(\int^t_{\tau_1}\eta_\mu(\tau_2)\,d\tau_2\)
  \(\int^{\tau_1}_0\eta_\mu(\tau_2)\,d\tau_2\)
  \,d\tau_1
\\
& \le
  \(\frac{\om_1}{\om_0}\)^2
  |\xi|^3 \eta_\mu(t)^{-1}
  \int^t_0 
  \(\int^t_{\tau_1}\,d\tau_2\)
  \(\int^{\tau_1}_0\eta_\mu(\tau_2)\,d\tau_2\)
  \,d\tau_1
\\
& =
  \(\frac{\om_1}{\om_0}\)^2
  |\xi|^3 \eta_\mu(t)^{-1}
  \int^t_0 \frac{\pa}{\pa \tau_1}\(t\tau_1-\frac12\tau_1^2\) 
  \(\int^{\tau_1}_0\eta_\mu(\tau_2)\,d\tau_2\)
  \,d\tau_1
\\
& =
  \(\frac{\om_1}{\om_0}\)^2
  |\xi|^3 \eta_\mu(t)^{-1}
  \(
    \frac12 t^2 \(\int^{t}_0\eta_\mu(\tau_1)\,d\tau_1\)
   -\int^t_0 \(t\tau_1-\frac12\tau_1^2\) \eta_\mu(\tau_1)\,d\tau_1
  \)
\\
& \le
  \(\frac{\om_1}{\om_0}\)^2
  |\xi|^3 \eta_\mu(t)^{-1}
  \frac12 t^2 \(\int^{t}_0\eta_\mu(\tau_1)\,d\tau_1\)
\\
&= \frac{\om_1}{\om_0} |\xi| \eta_\mu(t)^{-1}
  \int^{t}_0\eta_\mu(\tau_1)\,d\tau_1
  \frac{1}{2!} 
  \(\sqrt{\frac{\om_1}{\om_0}}|\xi| t\)^2. 
\end{align*}
Thus \eqref{intpqj2} is valid for $l=1$. 
Suppose that \eqref{intpqj2} is valid for $l \ge 2$. 
Then we have 
\begin{align*}
& \int^t_0|q(t,\tau_1,\xi)|\int^{\tau_1}_0|q(\tau_1,\tau_2,\xi)|
  \cdots\int^{\tau_{l}}_0|q(\tau_{l},\tau_{l+1},\xi)|
  |p_2(\tau_{l+1},\xi)|\,d\tau_{l+1}\cdots d\tau_1 
\\
&\le
   \int^t_0
   \(\frac{\om_1}{\om_0}
   |\xi|^2 \eta_\mu(t)^{-1}\int^t_{\tau_1}\eta_\mu(\tau_2)\,d\tau_2\)
\\
&\qquad  \times\frac{\om_1}{\om_0}|\xi| \eta_\mu(\tau_1)^{-1}
    \int^{\tau_1}_0\eta_\mu(\tau_2)\,d\tau_2
  \frac{1}{(2l)!}
  \(\sqrt{\frac{\om_1}{\om_0}} |\xi| \tau_1\)^{2l}\, d \tau_1 
\\
&= \frac{\om_1}{\om_0} |\xi| \eta_\mu(t)^{-1}
   \int^t_0
   \eta_\mu(\tau_1)^{-1}\(\int^t_{\tau_1}\eta_\mu(\tau_2)\,d\tau_2\)
   \(\int^{\tau_1}_0\eta_\mu(\tau_2)\,d\tau_2\)
   \tau_1^{2l}\, d \tau_1 
\\
&\qquad \times
   \frac{1}{(2l)!} \(\sqrt{\frac{\om_1}{\om_0}} |\xi| \)^{2l+2}
\\
&\le
\frac{\om_1}{\om_0} |\xi| \eta_\mu(t)^{-1}
   \int^t_0
   \(\int^t_{\tau_1} \,d\tau_2\)
   \(\int^{\tau_1}_0\eta_\mu(\tau_2)\,d\tau_2\)
   \tau_1^{2l}\, d \tau_1 
   \frac{1}{(2l)!} \(\sqrt{\frac{\om_1}{\om_0}} |\xi| \)^{2l+2}
\\
&= \frac{\om_1}{\om_0} |\xi| \eta_\mu(t)^{-1}
   \int^t_0
   \frac{\pa}{\pa \tau_1}
   \(\frac{t}{2l+1}\tau_1^{2l+1} - \frac{1}{2l+2} \tau_1^{2l+2}\)
   \(\int^{\tau_1}_0\eta_\mu(\tau_2)\,d\tau_2\)\, d \tau_1 
\\
&\qquad \times
   \frac{1}{(2l)!} \(\sqrt{\frac{\om_1}{\om_0}} |\xi| \)^{2l+2}
\\
&\le
   \frac{\om_1}{\om_0} |\xi| \eta_\mu(t)^{-1}
   \int^{t}_0\eta_\mu(\tau_1)\,d\tau_1
   \frac{1}{(2(l+1))!} \(\sqrt{\frac{\om_1}{\om_0}} |\xi| t\)^{2(l+1)}.
\end{align*}
Therefore, \eqref{intpqj2} is also valid for any $l \in \N$. 
\end{proof}

\noindent 
\textit{Proof of Proposition \ref{Prop1} (i). }
By \eqref{cE=|V|}, \eqref{eta|xi|} and Lemma \ref{lemma-est_vjk}, we have
\begin{align*}
  \cE(t,\xi)
&=\left\|\begin{pmatrix}
  v_{11}(t,\xi) & v_{12}(t,\xi) \\ v_{21}(t,\xi) & v_{22}(t,\xi)
  \end{pmatrix}
  \begin{pmatrix} i|\xi|v_0(\xi) \\ v_1(\xi) \end{pmatrix}
  \right\|^2_{\C^2}
\\
&=\left|i|\xi| v_{11}(t,\xi) v_0(\xi)+v_{12}(t,\xi)v_1(\xi)\right|^2
 +\left|i|\xi| v_{21}(t,\xi) v_0(\xi)+v_{22}(t,\xi)v_1(\xi)\right|^2
\\
&\le 
  2|\xi|^2\(|v_{11}(t,\xi)|^2 + |v_{21}(t,\xi)|^2\)|v_0(\xi)|^2
\\
&\quad
 +2\(|v_{12}(t,\xi)|^2 + |v_{22}(t,\xi)|^2\) |v_1(\xi)|^2
\\
&\lesssim 
  |\xi|^2 |v_0(\xi)|^2
 +\eta_\mu(t)^2 |v_1(\xi)|^2
\\
&\lesssim 
  \eta_\mu(t)^2\( |v_0(\xi)|^2 + |v_1(\xi)|^2\).
\end{align*}
\qed

\subsection{Proof of Proposition \ref{Prop1} (ii)} 
From now on, $\ze$ denotes 
\begin{equation*}
  \ze(r)=\frac{r}{\vrho(r)}
\end{equation*}
instead of \eqref{zeta}. 
Then \eqref{XiTh-thm0} and \eqref{ass0} can be represented by 
\eqref{XiTh-thm1} and \eqref{ass1}, respectively. 
We note that the following inequality holds in $Z_H(N)$: 
\begin{equation}\label{txi-ZH}
  |\xi| \ge N \ze^{-1}(\Th(t)). 
\end{equation}
In high frequency region $Z_H(N)$, where the effect of the oscillations of $\si(t)$ cannot be neglected, we evaluate $\cE(t,\xi)$ by applying refined diagonalization method, which makes use of the $C^m$-property (A3) introduced in \cite{EFH15, H07, HW08}. 
The following lemma is essential for that method. 
\begin{lemma}\label{Lemm-rd}
Let $\phi, r \in C^1(I)$ for $I \subset \R$, and denote 
$\phi_\Re=\Re\phi$ and $\phi_\Im=\Im\phi$. 
For $t \in I$, we define $\cA=\cA(t)$ and $\cM=\cM(t)$ by 
\begin{equation*}
  \cA:=\begin{pmatrix} \phi & \ol{r} \\ r & \ol{\phi} \end{pmatrix}
  \;\text{ and }\;
  \cM:=\begin{pmatrix} 1 & \ol{\de} \\ \de & 1 \end{pmatrix},
\end{equation*}
where
\[
  \de=\de(t)
  := -i\frac{r}{\phi_\Im}\(1+\sqrt{1-\(\frac{|r|}{\phi_\Im}\)^2}\)^{-1}. 
\]
If $|\phi_\Im|>|r|$ on $I$, then following equalites hold: 
\begin{equation}\label{Lemm-rd-e1}
  \cM^{-1} \cA \cM = \begin{pmatrix} \la & 0 \\ 0 & \ol{\la} \end{pmatrix},
  \;\;
  \la:= \phi_\Re + i \phi_\Im \sqrt{1-\(|r| / \phi_\Im\)^2}
\end{equation}
and
\begin{equation}\label{Lemm-rd-e2}
\begin{split}
  \cM^{-1} \cA \ \frac{d}{dt}\cM
& =\frac{d}{dt} \log \sqrt{1-|\de|^2}
    \begin{pmatrix} 1 & 0 \\ 0 & 1 \end{pmatrix}
\\
&\quad +\frac{1}{1-|\de|^2}
  \begin{pmatrix} 
    i\Im\(\ol{\de} \de'\) & \ol{\de'} \\ 
    \de' & \ol{i\Im\(\ol{\de} \de'\)} \end{pmatrix}. 
\end{split}
\end{equation}
\end{lemma}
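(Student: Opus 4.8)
The plan is to prove Lemma \ref{Lemm-rd} by direct computation, first securing the invertibility of $\cM$ and then verifying the two identities in turn. I would begin by computing $\det\cM=1-|\de|^2$ and checking that it is strictly positive. Writing $s:=\sqrt{1-(|r|/\phi_\Im)^2}$, the hypothesis $|\phi_\Im|>|r|$ gives $s\in(0,1]$, and from $\de=-i\,r/(\phi_\Im(1+s))$ one finds $|\de|^2=|r|^2/(\phi_\Im^2(1+s)^2)$. The identity $|r|^2=\phi_\Im^2(1-s^2)=\phi_\Im^2(1-s)(1+s)$ collapses this to $|\de|^2=(1-s)/(1+s)$, so that $1-|\de|^2=2s/(1+s)>0$ and in particular $|\de|<1$. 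Thus $\cM$ is invertible with $\cM^{-1}=(1-|\de|^2)^{-1}\left(\begin{smallmatrix}1 & -\ol\de\\ -\de & 1\end{smallmatrix}\right)$, and the relation $|r|^2=\phi_\Im^2(1-s^2)$ will be the workhorse throughout.

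For \eqref{Lemm-rd-e1} the cleanest route is to show that the columns of $\cM$ are eigenvectors of $\cA$, i.e. that $\cA\,{}^t(1,\de)=\la\,{}^t(1,\de)$, which amounts to the two scalar identities $\phi+\ol r\,\de=\la$ and $r+\ol\phi\,\de=\la\de$. Substituting the definition of $\de$ and replacing $|r|^2$ by $\phi_\Im^2(1-s)(1+s)$, the first identity reduces after cancellation to $\phi_\Re+i\phi_\Im s=\la$, which is exactly the definition of $\la$; the second follows by a parallel manipulation. Taking complex conjugates of these two identities shows that ${}^t(\ol\de,1)$ is an eigenvector with eigenvalue $\ol\la$, so $\cA\cM=\cM\diag(\la,\ol\la)$, which is \eqref{Lemm-rd-e1}.

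For \eqref{Lemm-rd-e2} I would differentiate to obtain $\tfrac{d}{dt}\cM=\left(\begin{smallmatrix}0 & \ol{\de'}\\ \de' & 0\end{smallmatrix}\right)$ and then evaluate the left-hand side by a direct matrix product using the $\cM^{-1}$ found above. The off-diagonal entries come out at once as $\ol{\de'}/(1-|\de|^2)$ and $\de'/(1-|\de|^2)$. For the diagonal I would use $\tfrac{d}{dt}|\de|^2=\de'\ol\de+\de\ol{\de'}=2\Re(\ol\de\,\de')$, whence $\tfrac{d}{dt}\log\sqrt{1-|\de|^2}=-\Re(\ol\de\,\de')/(1-|\de|^2)$; each diagonal entry then splits into this common scalar, which produces the multiple of the identity, plus the purely imaginary remainder $\pm i\Im(\ol\de\,\de')/(1-|\de|^2)$, matching the stated form.

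The genuinely delicate step is the algebra behind \eqref{Lemm-rd-e1}: the cancellations that turn $\phi+\ol r\,\de$ into $\phi_\Re+i\phi_\Im s$ hinge on invoking $|r|^2=\phi_\Im^2(1-s^2)$ at the right moment and on tracking the factor $1+s$ in the denominator of $\de$, and one must keep $\la$ carrying $\phi_\Im$ rather than $|\phi_\Im|$ so that the sign of $\phi_\Im$ is handled consistently. By contrast \eqref{Lemm-rd-e2} is routine once the logarithmic-derivative identity is in hand, the only care needed being the real/imaginary bookkeeping on the diagonal.
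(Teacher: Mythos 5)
Your proposal is correct and follows essentially the same route as the paper, whose proof simply notes that $\det\cM=1-|\de|^2>0$ and verifies both identities by direct calculation, with $\la,\ol{\la}$ as the eigenvalues of $\cA$ and $\cM$ as its diagonalizer. Note that the factor $\cA$ on the left-hand side of \eqref{Lemm-rd-e2} is evidently a misprint for the plain product $\cM^{-1}\frac{d}{dt}\cM$ (as the later recursion $A_{k+1}=M_k^{-1}A_kM_k-M_k^{-1}(\pa_t M_k)$ confirms), and your computation correctly targets that quantity.
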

\begin{proof}
Note that $\det\cM = 1-|\de|^2>0$, 
we can verify \eqref{Lemm-rd-e1} and \eqref{Lemm-rd-e2} by direct calculations.  
Clearly, $\la$ and $\ol{\la}$ are eigenvalues of $\cA$, and $\cM$ is the diagonalizer of $\cA$. 
\end{proof}

For $A=A(t,\xi)$ defined by \eqref{A}, 
we define $M$, $A_0=A_0(t,\xi)$ and $M_0=M_0(t,\xi)$ by 
\begin{equation}\label{M}
  M: = \begin{pmatrix} 1 & -1 \\ 1 & 1 \end{pmatrix}, 
\end{equation}
\begin{equation}\label{A0}
  A_0(t,\xi)
 :=M^{-1} A M
 =\begin{pmatrix} \phi_0 & r_0 \\ r_0 & \ol{\phi_0}\end{pmatrix}
\end{equation}
and 
\begin{equation*}
  M_0(t,\xi):=\begin{pmatrix}
    1 & \ol{\de_0} \\ \de_0 & 1
  \end{pmatrix},
\end{equation*}
where 
\begin{equation*}
  \phi_0=\phi_0(t,\xi):=i|\xi|-\frac{b(t)}{2},\;\;
  r_0=r_0(t):=-\frac{b(t)}{2}
\end{equation*}
and
\begin{equation*}
  \de_0=\de_0(t,\xi)
:=-i\frac{b(t)}{2|\xi|}\(1+\sqrt{1-\(\frac{b(t)}{2|\xi|}\)^2}\)^{-1}. 
\end{equation*}
By \eqref{XiTh-thm1}, there exists a positive constant $b_1$ such that 
\begin{equation*}
  b_1=\sup_{t\ge0} \left\{\frac{\varXi(t)}{\ze^{-1}\(\Th(t)\)}\right\}. 
\end{equation*}
If $N > b_1/2$, then we have 
\begin{equation}\label{r0phi0}
  |r_0(t)|
  \le \frac{\varXi(t)}{2}
  \le \frac{b_1 \ze^{-1}\(\Th(t)\)}{2}
  \le \frac{b_1 |\xi|}{2N}
  < |\xi| = |\Im \phi_0(t,\xi)|
\end{equation}
by \eqref{Cm} and \eqref{txi-ZH}. 
Therefore, by considering $\xi$ as a parameter, 
setting $A_0 = \cA$ and $\cM=M_0$, 
and applying Lemma \ref{Lemm-rd}, we have the following equality: 
\begin{equation*}
  M_0^{-1} A_0 M_0 - M_0^{-1}A_0 \(\pa_t M_0\)
 =:A_1(t,\xi)
 =\begin{pmatrix} 
    \phi_{1} & \ol{r_{1}} \\ r_{1} & \ol{\phi_{1}}
  \end{pmatrix},
\end{equation*}
where 
\begin{equation*}
  \la_0=\la_0(t,\xi)
  :=-\frac{b(t)}{2} + i|\xi|\sqrt{1-\(\frac{b(t)}{2|\xi|}\)^2}, 
\end{equation*}
\begin{equation*}
  \phi_1=\phi_1(t,\xi):
 =\la_0 - \pa_t \log \sqrt{1-|\de_0|^2}
 -\frac{i\Im\(\ol{\de_0} \pa_t \de_0\)}{1-|\de_0|^2}
\end{equation*}
and
\begin{equation*}
  r_1=r_1(t,\xi) =-\frac{\pa_t \de_0}{1-|\de_0|^2}. 
\end{equation*}
If $N \ge b_1$, then \eqref{Cm}, \eqref{txi-ZH} and \eqref{r0phi0} imply the followings: 
\begin{align*}
  |\de_0| \le \frac{|b(t)|}{2|\xi|} 
  \le \frac{\varXi(t)}{2|\xi|}
  \le \frac{b_1}{2N}
  \le \frac{1}{2}, 
\end{align*}
\begin{align*}
  |\pa_t \de_0|
&\le
  \frac{|b'(t)|}{2|\xi|}
 +\frac{|b(t)|^2|b'(t)|}{8|\xi|^3}
  \(1-\(\frac{b(t)}{2|\xi|}\)^2\)^{-\frac12}
\\
&\le
  \frac{|b'(t)|}{2|\xi|}
 +\frac{|b(t)|^2|b'(t)|}{8|\xi|^3}
  \(\frac{3}{4}\)^{-\frac12}
\\
&\le
  \frac{|b'(t)|}{2|\xi|} \(1 + \(\frac{b(t)}{2|\xi|}\)^2\)  
 \le \frac{3\varXi(t)^2}{4|\xi|}. 
\end{align*}
Therefore, we have 
\begin{equation*}
  |r_1| \le \frac{1}{1-\frac{1}{4}}\frac{3\varXi(t)^2}{4|\xi|}
  \le \frac{\varXi(t)^2}{|\xi|}
\end{equation*}
and
\begin{equation*}
  \phi_{1\Re}=-\frac{b(t)}{2}-\pa_t \log\sqrt{1-|\de_0|}. 
\end{equation*}
Thus we can again apply Lemma \ref{Lemm-rd} as $\cA = A_1$ by taking $N$ large enough. 
By repeating this procedure, we expect the following lemma to hold.

\begin{lemma}\label{Lemma-Cm} 
Let $A_0$ be defined by \eqref{A0}, and 
$M_k=M_k(t,\xi)$, $\de_k=\de_k(t,\xi)$ and $A_{k+1}=A_{k+1}(t,\xi)$ for $k=0,\ldots,m-1$ be defined inductively as follows: 
\[
  M_k:=\begin{pmatrix} 1 & \ol{\de_k} \\ \de_k & 1 \end{pmatrix},
  \quad
  \de_k:=-i\frac{r_k}{\phi_{k\Im}}
  \(1+\sqrt{1-\(\frac{|r_k|}{\phi_{k\Im}}\)^2}\)^{-1}
\]
and
\[
  A_{k+1}
  =\begin{pmatrix} 
    \phi_{k+1} & \ol{r_{k+1}} \\ r_{k+1} & \ol{\phi_{k+1}}
  \end{pmatrix}
  :=M_k^{-1} A_k M_k - M_k^{-1} \(\pa_t M_k\).
\]
Then, there exists a constant $N_0 \ge 1$ such that 
\[
  \sup_{(t,\xi)\in Z_H(N)}\{|\de_k(t,\xi)|\} \le \frac12
\]
for any $k=0,\ldots,m-1$ and $N \ge N_0$. 
Furthermore, the following properties hold: 
\begin{equation}\label{phi_mR}
  \phi_{m\Re}(t,\xi)
 =-\frac{b(t)}{2}-\frac12\pa_t \log \prod_{k=0}^{m-1} \(1-|\de_k|^2\) 
\end{equation}
and
\begin{equation}\label{r_k}
  |r_k(t,\xi)| \lesssim \frac{\varXi(t)^{k+1}}{|\xi|^k}
\end{equation}
for $k=0,\ldots,m$. 
\end{lemma}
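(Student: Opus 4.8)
The plan is to prove all three assertions simultaneously by induction on $k$, treating $\xi$ as a fixed parameter and working entirely inside $Z_H(N)$, where \eqref{txi-ZH} guarantees $|\xi| \gtrsim \ze^{-1}(\Th(t))$ and hence, by \eqref{Cm} and the definition of $b_1$, $\varXi(t) \le b_1|\xi|/N$. The base case $k=0$ is exactly what was verified in the computations preceding the lemma: $|r_0| = |b|/2 \le \varXi/2$, the bound $|\de_0| \le b_1/(2N)$, the derivative estimate $|\pa_t\de_0| \lesssim \varXi^2/|\xi|$, the resulting $|r_1| \le \varXi^2/|\xi|$, and the real-part identity $\phi_{1\Re} = -b/2 - \pa_t\log\sqrt{1-|\de_0|^2}$, which is \eqref{phi_mR} at level $1$.

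For the inductive step I would assume that $A_k$ has the Hermitian-like structure displayed in the statement, that the real-part formula holds with the product running up to $k-1$, and that $|\phi_{k\Im}|$ stays comparable to $|\xi|$ (with corrections of size $O(\varXi^2/|\xi|)$, tracked as part of the induction). The first task is to check the hypothesis $|\phi_{k\Im}| > |r_k|$ of Lemma \ref{Lemm-rd}: the inductive bound gives $|r_k|/|\phi_{k\Im}| \lesssim \varXi^{k+1}/|\xi|^{k+1} \le (b_1/N)^{k+1}$, so this holds once $N$ is large, and the same estimate makes $|\de_k| \lesssim (b_1/N)^{k+1} \le 1/2$ for $N \ge N_0$, with $N_0$ chosen large enough to serve all finitely many $k \le m-1$ at once. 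Applying Lemma \ref{Lemm-rd} with $\cA = A_k$, $\cM = M_k$ diagonalizes the principal part as $M_k^{-1}A_k M_k = \diag(\la_k,\ol{\la_k})$ with $\Re\la_k = \phi_{k\Re}$, and reading off the diagonal and off-diagonal entries of $-M_k^{-1}\pa_t M_k$ from \eqref{Lemm-rd-e2} yields
\[
  \phi_{(k+1)\Re} = \phi_{k\Re} - \tfrac12\pa_t\log\(1-|\de_k|^2\),
  \qquad
  r_{k+1} = -\frac{\pa_t\de_k}{1-|\de_k|^2}.
\]
Substituting the inductive real-part formula extends the product to run up to $k$, which is \eqref{phi_mR} at level $k+1$.

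The analytic heart of the argument is the bound \eqref{r_k}, and the main obstacle is that estimating $r_{k+1} = -\pa_t\de_k/(1-|\de_k|^2)$ requires controlling $\pa_t\de_k$, hence $\pa_t r_k$, so a bare induction on $|r_k|$ alone does not close. I would therefore strengthen the induction hypothesis to a symbol-type estimate tracking all relevant $t$-derivatives, namely
\[
  \left|\pa_t^i r_k(t,\xi)\right| \lesssim \frac{\varXi(t)^{k+1+i}}{|\xi|^k}
  \qquad (0 \le i \le m-k),
\]
together with companion bounds of the same flavour for $\de_k$ and for $\phi_{k\Im}-|\xi|$. The mechanism driving this is that, by \eqref{Cm} and the monotonicity of $\varXi$, each $t$-differentiation of any quantity built algebraically from $b,\ldots,b^{(m)}$ and from $|\xi|$ costs one extra factor of $\varXi$; since $\de_k$ is such an algebraic expression with $|\phi_{k\Im}| \simeq |\xi|$ and denominators bounded away from zero, one gets $|\pa_t^i\de_k| \lesssim \varXi^{k+1+i}/|\xi|^{k+1}$ for $0 \le i \le m-k$, and the relation $r_{k+1} = -\pa_t\de_k/(1-|\de_k|^2)$ then produces $|\pa_t^i r_{k+1}| \lesssim \varXi^{k+2+i}/|\xi|^{k+1}$ for $0 \le i \le m-k-1$, precisely the hypothesis at level $k+1$ with the derivative budget decreased by one. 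The $i=0$ case of this closed induction is the asserted bound \eqref{r_k}, and the process can be iterated exactly $m$ times before the regularity of $b$ is exhausted, which is why $\de_k$ is controlled only for $k \le m-1$ and $r_k$ for $k \le m$.
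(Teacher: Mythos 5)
Your proposal is correct and follows essentially the same route as the paper: the paper formalizes your ``strengthened induction hypothesis tracking $t$-derivatives, with the derivative budget decreasing by one at each diagonalization step'' as membership in symbol classes $S^{(m-k)}\{-k,k+1\}$ for $r_k$ (together with $S^{(m-k)}\{1,0\}$ for $\phi_{k\Im}$ and $S^{(m-k)}\{-k-1,k+1\}$ for $\de_k$), and handles the reciprocals and square roots via Fa\`a di Bruno's formula exactly where you appeal to ``each differentiation costs one factor of $\varXi$.'' The induction mechanism, the smallness of $|\de_k|$ for large $N$, the recursion $r_{k+1}=-\pa_t\de_k/(1-|\de_k|^2)$, and the telescoping of the real parts into \eqref{phi_mR} all coincide with the paper's argument.
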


We have already confirmed that \eqref{r_k} is correct for $k=0$ and $k=1$ by representing them concretely, however, it is difficult to do it for general $k\ge2$. 
Therefore, we shall show \eqref{r_k} by introducing the following symbol classes that can describe the orders of $\varXi(t)$ and $|\xi|$. 

\begin{definition}
For $N \ge 1$, 
integers $p$ and $q$, and non-negative integer $k$, we define the symbol class $S^{(k)}\{p,q\}$ by the following class of functions: 
\begin{equation*}
  S^{(k)}\{p,q\}:=
  \left\{
  f(t,\xi) \in C^k(Z_H(N))\;;\;
  \max_{0 \le j \le k}\sup_{(t,\xi)\in Z_H(N)}\left\{
  \frac{|\pa_t^j f(t,\xi)|}{|\xi|^{p} \varXi(t)^{q+j}}\right\}
  <\infty
  \right\}.
\end{equation*}
\end{definition}

As properties of the symbol class $S^{(k)}\{p,q\}$, we introduce the following lemmas: 
\begin{lemma}\label{lemm-symbol1}
\begin{itemize}
\item[(i)] 
If $f \in S^{(k)}\{p,q\}$, then $f \in S^{(j)}\{p+r,q-r\}$ for any $0 \le j \le k$ and $r \ge 0$. 
\item[(ii)] 
If $f_1,f_2 \in S^{(k)}\{p,q\}$, then for any constants  $C_1$ and $C_2$ we have $C_1 f_1 + C_2 f_2 \in S^{(k)}\{p,q\}$. 
\item[(iii)] 
If $f_1 \in S^{(k_1)}\{p_1,q_1\}$ and $f_2 \in S^{(k_2)}\{p_2,q_2\}$, then 
$f_1 f_2 \in S^{(\min\{k_1,k_2\})}\{p_1+p_2,q_1+q_2\}$. 
\item[(iv)] 
If $f \in S^{(k)}\{p,q\}$ and $k \ge 1$, then $\pa_t f \in S^{(k-1)}\{p,q+1\}$. 
\end{itemize}
\end{lemma}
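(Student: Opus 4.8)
The plan is to verify the four assertions directly from the definition of $S^{(k)}\{p,q\}$, observing that (ii), (iii) and (iv) are formal consequences of, respectively, the triangle inequality, the Leibniz rule, and a shift of the differentiation index, while (i) is the only item carrying a genuinely quantitative input. Recall that $f\in S^{(k)}\{p,q\}$ means precisely that there is a constant $C$ with $|\pa_t^j f(t,\xi)|\le C|\xi|^p\varXi(t)^{q+j}$ for all $0\le j\le k$ and all $(t,\xi)\in Z_H(N)$.

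First I would dispose of (ii) and (iv), which require no geometry of $Z_H(N)$. For (ii), write $f_1,f_2\in S^{(k)}\{p,q\}$ with constants $C_1,C_2$; then for $0\le j\le k$ the triangle inequality gives $|\pa_t^j(C_1f_1+C_2f_2)|\le (|C_1|C_1+|C_2|C_2)|\xi|^p\varXi(t)^{q+j}$, which is the required bound. For (iv), if $f\in S^{(k)}\{p,q\}$ with $k\ge1$, then for $0\le j\le k-1$ one has $\pa_t^j(\pa_t f)=\pa_t^{j+1}f$ with $j+1\le k$, so $|\pa_t^{j+1}f|\le C|\xi|^p\varXi(t)^{q+1+j}$, exactly the estimate defining $S^{(k-1)}\{p,q+1\}$. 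For (iii), set $k=\min\{k_1,k_2\}$ and apply the Leibniz formula $\pa_t^j(f_1f_2)=\sum_{\ell=0}^j\binom{j}{\ell}\pa_t^\ell f_1\,\pa_t^{j-\ell}f_2$ for $0\le j\le k$. Since $\ell\le j\le k\le k_1$ and $j-\ell\le j\le k\le k_2$, every factor stays within the allowed order, so each summand is bounded by $C_1C_2|\xi|^{p_1+p_2}\varXi(t)^{q_1+q_2+j}$; summing the $2^j$ terms with their binomial weights yields the constant for $S^{(k)}\{p_1+p_2,q_1+q_2\}$.

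The substance is in (i). The reduction of the regularity index from $k$ to any $0\le j\le k$ is trivial, since asking for the symbol estimate up to order $j$ is weaker than up to order $k$. The exponent trade $\{p,q\}\to\{p+r,q-r\}$ with $r\ge0$ is where the structure of the hyperbolic zone enters: starting from $|\pa_t^\ell f|\le C|\xi|^p\varXi(t)^{q+\ell}$, it suffices to show $\varXi(t)^{q+\ell}\le|\xi|^r\varXi(t)^{q-r+\ell}$, i.e.\ $\varXi(t)^r\le|\xi|^r$, which for $r\ge0$ reduces to the single inequality $\varXi(t)\le|\xi|$ on $Z_H(N)$. Here I would invoke \eqref{txi-ZH}, which gives $|\xi|\ge N\ze^{-1}(\Th(t))$, together with the constant $b_1=\sup_{t\ge0}\{\varXi(t)/\ze^{-1}(\Th(t))\}$ supplied by \eqref{XiTh-thm1}, to obtain $\varXi(t)\le b_1\ze^{-1}(\Th(t))\le b_1|\xi|/N$ exactly as in \eqref{r0phi0}; choosing $N\ge b_1$ then forces $\varXi(t)\le|\xi|$, hence $(|\xi|/\varXi(t))^r\ge1$. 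I expect this verification of $\varXi(t)\lesssim|\xi|$ in $Z_H(N)$ to be the only real obstacle, and it is mild: it is precisely the frequency-localization built into the definition of the hyperbolic zone, and it is the reason the whole symbol calculus is internally consistent, allowing orders of $|\xi|$ and $\varXi(t)$ to be exchanged freely in the direction that loses no information.
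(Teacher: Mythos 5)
Your proposal is correct and follows essentially the same route as the paper: items (ii)--(iv) are the routine triangle-inequality, Leibniz-rule and index-shift arguments the paper dismisses as trivial, and for (i) you use exactly the paper's key inequality $\varXi(t)\le b_1N^{-1}|\xi|$ on $Z_H(N)$ coming from \eqref{txi-ZH} and the constant $b_1$ of \eqref{XiTh-thm1}. (The only blemish is the reuse of $C_1,C_2$ in (ii) for both the scalar coefficients and the symbol-class constants, which is cosmetic.)
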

\begin{proof} 
Let us prove only (i); the others are trivial from the definition of the symbol classes. 
If $f \in S^{(k)}\{p,q\}$, then $f \in S^{(j)}\{p,q\}$ for $0 \le j \le k$ is trivial. 
Since $\varXi(t) \le b_1N^{-1} |\xi|$ in $Z_H(N)$, we have 
\begin{align*}
  |\pa_t^l f(t,\xi)| 
  \lesssim |\xi|^p \varXi(t)^{q+l}
  \le \(\frac{b_1}{N}\)^r
  |\xi|^{p+r} \varXi(t)^{q-r+l}
\end{align*}
for $0\le l \le j$, 
it follows that $f \in S^{(k)}\{p+r,q-r\}$. 
\end{proof}

\begin{lemma}\label{lemm-symbol2}
There exists a positive constant $N_0$ such that the following properties hold for any $N \ge N_0$: 
\begin{itemize}
\item[(i)] 
For any $\ve>0$, if $f \in S^{(k)}\{-r,r\}$ with $r \ge 1$, then 
$\sup_{(t,\xi)\in Z_H(N)}\{|f(t,\xi)|\} < \ve$. 
\item[(ii)] 
If $f \in S^{(k)}\{0,0\}$ and $\inf_{(t,\xi)\in Z_H(N)}\{ |f(t,\xi)|\}>0$, then  $1/f(t,\xi) \in S^{(k)}\{0,0\}$. 
\item[(iii)] 
If $f \in S^{(k)}\{1,0\}$ and 
$\inf_{(t,\xi)\in Z_H(N)}\{ |f(t,\xi)|/|\xi|\}>0$, then 
$1/f(t,\xi) \in S^{(k)}\{-1,0\}$. 
\item[(iv)] 
If $f \in S^{(k)}\{-p,p\}$ with $p\ge 1$, then $\sqrt{1-|f|^2} \in S^{(k)}\{0,0\}$. 
\end{itemize}
\end{lemma}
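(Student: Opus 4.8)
The plan is to prove Lemma \ref{lemm-symbol2} by working directly from the definition of the symbol class $S^{(k)}\{p,q\}$ together with the basic arithmetic rules established in Lemma \ref{lemm-symbol1}, and exploiting the crucial smallness estimate $\varXi(t) \le b_1 N^{-1}|\xi|$ valid throughout $Z_H(N)$. The four assertions are of two distinct flavors: (i) and (iv) are pointwise smallness/boundedness statements that follow almost immediately from the definitions once $N$ is taken large, whereas (ii) and (iii) concern the \emph{reciprocal} of a symbol and therefore require controlling all derivatives up to order $k$, which is where the real work lies.

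For (i), I would argue that if $f \in S^{(k)}\{-r,r\}$ with $r \ge 1$, then by the $j=0$ bound in the definition we have $|f(t,\xi)| \lesssim |\xi|^{-r}\varXi(t)^{r}$; since $\varXi(t)/|\xi| \le b_1/N$ in $Z_H(N)$, this gives $|f| \lesssim (b_1/N)^r \le b_1/N$ (using $r\ge1$ and $b_1/N$ small), so choosing $N_0$ large enough forces the supremum below any prescribed $\ve$. For (iv), I would write $1-|f|^2 = 1 - f\bar f$; by Lemma \ref{lemm-symbol1}(iii) the product $f\bar f \in S^{(k)}\{-2p,2p\}$, and part (i) shows $|f|^2$ is uniformly small, so $1-|f|^2$ stays bounded away from $0$ (near $1$). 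The square root of a function that is bounded between two positive constants and lies in $S^{(k)}\{0,0\}$ after subtracting the constant $1$ is then handled by the chain rule: each $t$-derivative of $\sqrt{1-|f|^2}$ produces derivatives of $|f|^2$ (each of which lives in $S^{(k)}\{-2p,2p\}\subset S^{(k)}\{0,0\}$ by part (i) of Lemma \ref{lemm-symbol1}) divided by powers of $\sqrt{1-|f|^2}$ which are bounded below, yielding the claimed membership $\sqrt{1-|f|^2}\in S^{(k)}\{0,0\}$.

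The substance is in (ii) and (iii), and the main obstacle is the estimation of the derivatives of $1/f$. I would proceed by induction on the derivative order, using the fact that $\pa_t^j(1/f)$ can be expressed (via the Fa\`a di Bruno / quotient rule) as a finite sum of terms of the form $f^{-(l+1)}\prod_{i}\pa_t^{a_i} f$ where $\sum_i a_i = j$ and the number of factors is $l \le j$. For (ii), since $f \in S^{(k)}\{0,0\}$ and is bounded below in modulus, each factor $\pa_t^{a_i} f \in S^{(k-a_i)}\{0,a_i\}$ by Lemma \ref{lemm-symbol1}(iv), so the product lies in $S^{(\cdot)}\{0,j\}$; multiplying by the bounded quantity $f^{-(l+1)}$ (whose modulus is controlled since $\inf|f|>0$) preserves this, giving $|\pa_t^j(1/f)| \lesssim \varXi(t)^{j}$, which is exactly the bound required for membership in $S^{(k)}\{0,0\}$. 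The argument for (iii) is the mirror image with the extra factor of $|\xi|$: here $f \in S^{(k)}\{1,0\}$ with $|f|/|\xi|$ bounded below, so $f^{-(l+1)}$ contributes $|\xi|^{-(l+1)}$ while the product of derivatives contributes $|\xi|^{l}\varXi(t)^{j}$, and the net power of $|\xi|$ is $-1$, yielding $|\pa_t^j(1/f)| \lesssim |\xi|^{-1}\varXi(t)^{j}$ as needed for $S^{(k)}\{-1,0\}$.

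The delicate point throughout is to verify that the combinatorial expansion of the derivatives of a reciprocal never produces a term with more powers of $|\xi|$ in the numerator than the denominator can absorb; the bookkeeping is cleanest if I track, for each term, the total $\varXi$-order (which should equal the differentiation order $j$) and the total $|\xi|$-order (which the $p$-index of the symbol class records) separately, and check that these two invariants are preserved under the product and reciprocal operations. Once the induction is set up this way, each case reduces to the arithmetic already packaged in Lemma \ref{lemm-symbol1}, so no genuinely new estimate is needed beyond the uniform lower bounds hypothesized in (ii) and (iii) and the uniform smallness of $\varXi(t)/|\xi|$ secured by enlarging $N_0$.
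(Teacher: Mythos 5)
Your proposal is correct and follows essentially the same route as the paper: part (i) from the uniform bound $\varXi(t)/|\xi|\le b_1/N$ in $Z_H(N)$, parts (ii)--(iii) by expanding the derivatives of $1/f$ via Fa\`a di Bruno (equivalently the iterated quotient rule) and tracking the $|\xi|$- and $\varXi$-orders of each term, and part (iv) by the same device applied to $F(G)=\sqrt{1-G}$ with $G=|f|^2$ after using (i) to keep $1-|f|^2$ bounded away from zero. There is no substantive difference in method.
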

\begin{proof}
(i) is confirmed by setting $p=q=0$ and taking $N$ large enough with respect to $\ve$ in the proof of Lemma \ref{lemm-symbol1} (i). 
Since (ii) is proved by the same way as the proof of (iii), we shall prove (iii). 
By applying Fa\`a di Bruno's formula:
\begin{equation}\label{FBF}
  \frac{d^j}{dt^j}F(G(x))
 =j!\sum_{h=1}^j 
  F^{(h)}(G(t))
  \sum_{\substack{h_1 + 2h_2 + \cdots + n h_n=j \\ 
                  h_1+h_2+\cdots + h_j = h}}
  \,
  \prod_{l=1}^j \frac{1}{h_l!\, l!^{h_l}}\(G^{(l)}(t)\)^{h_l} 
\end{equation}
as $F(G)=1/G$ and $G=f(t,\xi)$, and note that 
$|F^{(h)}(G)|\lesssim |G|^{-h-1}$, 
we have 
\begin{align*}
  \left|\pa_t^j \frac{1}{f(t,\xi)}\right|
\lesssim\:&
  \sum_{h=1}^j \frac{1}{f(t,\xi)^{h+1}}
  \sum_{\substack{h_1 + 2h_2 + \cdots + n h_n=j \\ 
                  h_1+h_2+\cdots + h_k = h}}
  \,
  \prod_{l=1}^j \left|\pa_t^l f(t,\xi)\right|^{h_l}
\\
\lesssim\:&
  \sum_{h=1}^k |\xi|^{-h-1}
  \sum_{\substack{h_1 + 2h_2 + \cdots + n h_n=j \\ 
                  h_1+h_2+\cdots + h_j = h}} \,
  \prod_{l=1}^j \(|\xi| \varXi(t)^{-l}\)^{h_l}
\\
\lesssim\:&
  |\xi|^{-1} \varXi(t)^{-j}
\end{align*}
for $j=0,\ldots,k$. 
We shall prove (iv) by applying \eqref{FBF} as $F(G)=\sqrt{1-G}$ and $G=|f(t,\xi)|^2$. 
We can assume that $|f(t,\xi)|^2 \le 1/2$ by (i). 
Since 
\[
  |F^{(h)}(G)| \lesssim \frac{F(G)}{(1-G)^h} 
  \le \frac{1}{(1-G)^h} 
  \le 2^h
\]
and $|f(t,\xi)|^2 \in S^{(k)}\{-2p,2p\}$ by Lemma \ref{lemm-symbol1} (iii), 
we have 
\begin{align*}
  \left|\pa_t^j \sqrt{1-|f(t,\xi)|^2}\right|
&\lesssim
  \sum_{h=1}^j 2^h 
  \sum_{\substack{h_1 + 2h_2 + \cdots + n h_n=j \\ 
                  h_1+h_2+\cdots + h_j = h}}
  \,
  \prod_{l=1}^j \left|\pa_t^l |f(t,\xi)|^2\right|^{h_l}
\\
&\lesssim
  \sum_{h=1}^j 
  \sum_{\substack{h_1 + 2h_2 + \cdots + n h_n=j \\ 
                  h_1+h_2+\cdots + h_j = h}}
  \,
  \prod_{l=1}^j \(|\xi|^{-2q}\varXi(t)^{-2q-l}\)^{h_l}
\\
&\lesssim
  \sum_{h=1}^j |\xi|^{-2qh}\varXi(t)^{-2qh-j}
\le \varXi(t)^{-j} \sum_{h=1}^j \(\frac{b_1}{N}\)^{2qh}
\\
&\lesssim \varXi(t)^{-j} 
\end{align*}
for $j=0,\ldots,k$. 
Thus (iv) is proved.  
\end{proof}

\noindent 
\textit{Proof of Lemma \ref{Lemma-Cm}. } 
Let $N$ be set enough large so that Lemma \ref{lemm-symbol2} can be applied. 
First of all, the following properties are clear: 
\[
  \phi_{0\Re} \in S^{(m)}\{0,1\}, \;\;
  \phi_{0\Im} \in S^{(m)}\{1,0\}, \;\;
  r_0 \in S^{(m)}\{0,1\}
  \;\text{ and }\;
  1 \in S^{(m)}\{0,0\}. 
\]
Since $b(t)|\xi|^{-1} \in S^{(m)}\{-1,1\}$ by Lemma \ref{lemm-symbol1} (iii), 
Lemma \ref{lemm-symbol2} (iv) implies
\[
  \sqrt{1-\(\frac{b(t)}{2|\xi|}\)^2} \in S^{(m)}\{0,0\}. 
\]
Therefore, we have 
\[
  \de_0 \in S^{(m)}\{-1,1\} 
\]
by Lemma \ref{lemm-symbol2} (ii). 
Moreover, Lemma \ref{lemm-symbol2} (i) and (ii) imply 
\[
  \frac{1}{1-|\de_0|^2} \in S^{(m)}\{0,0\}. 
\]
Consequently, we have 
\[
  r_1 = -\frac{\pa_t \de_0}{1-|\de_0|^2} \in S^{(m-1)}\{-1,2\}. 
\]
Now we suppose that the following properties are valid: 
\begin{equation*}
  r_k \in S^{(m-k)}\{-k,k+1\}, \;\;
  \phi_{k\Im} \in S^{(m-k)}\{1,0\}
  \;\text{ and }\;
  \frac{1}{\phi_{k\Im}} \in S^{(m-k)}\{-1,0\} 
\end{equation*}
for $1 \le k \le m-1$.  
Then we have 
\begin{equation}\label{phikI}
  \inf_{(t,\xi)\in Z_H(N)}\left\{\frac{|\phi_{k\Im}(t,\xi)|}{|\xi|}\right\}>0, 
\end{equation}
and
\begin{equation}\label{rkphikI}
  \frac{r_k}{\phi_{k\Im}} \in S^{(m-k)}\{-k-1,k+1\}
  \;\text{ and }\;
  \frac{|r_k|}{|\xi|} \ll 1
\end{equation}
by Lemma \ref{lemm-symbol1} (iii) and Lemma \ref{lemm-symbol2} (i). 
Therefore, we can apply Lemma \ref{Lemm-rd} as $\phi=\phi_k$ and $r=r_k$. 
By the same argument as for $k=0$, we have 
\begin{equation*}
  \de_k \in S^{(m-k)}\{-k-1,k+1\}
  \;\text{ and }\;
  \frac{1}{1-|\de_k|^2} \in S^{(m-k)}\{0,0\}, 
\end{equation*}
and thus
\[
  r_{k+1} = -\frac{\pa_t \de_k}{1-|\de_k|^2} \in S^{(m-k-1)}\{-k-1,k+2\}. 
\]
It follows that 
\[
  \phi_{(k+1)\Im}
 =\phi_{k\Im}\sqrt{1-\(\frac{|r_k|}{\phi_{k\Im}}\)^2}
   -\Im\(\ol{\de_k} r_{k+1}\)
  \in S^{(m-k-1)}\{1,0\}
\]
by Lemma \ref{Lemm-rd}. 
Furthermore, by \eqref{phikI}, 
$\ol{\de_k}r_{k+1}|\xi|^{-1} \in S^{(m-k-1)}\{-2k-3,2k+3\}$ 
and Lemma \ref{lemm-symbol2} (i), we have 
\[
  \inf_{(t,\xi)\in Z_H(N)}
  \left\{\frac{|\phi_{(k+1)\Im}|}{|\xi|}\right\}>0. 
\]
Therefore, by Lemma \ref{lemm-symbol2} (iii) we obtain
\[
  \frac{1}{\phi_{(k+1)\Im}} \in S^{m-k-1}\{-1,0\}. 
\]
This procedure of applying Lemma \ref{Lemm-rd} can be repeated until $k=m$.
Moreover, \eqref{phi_mR} follows from Lemma \ref{Lemm-rd}. 
\qed

\medskip
\noindent 
\textit{Proof of Proposition \ref{Prop1} (ii).} 
Let $N_0$ be a sufficiently large number such that $|\de_k| \le 1/2$ for $k=0,\ldots,m-1$ by applying Lemma \ref{lemm-symbol2} (i) as $\de_k \in S^{(m-k)}\{-k-1,k+1\}$ and $\ve \le 1/2$. 
Then we have
\begin{equation*}
  \sup_{(t,\xi)\in Z_H(N)}\{\|M_k(t,\xi)\|_{\C^{2 \times 2}}\} \le 1 
  \;\text{ and }\;
  \sup_{(t,\xi)\in Z_H(N)}\{\|M_k(t,\xi)^{-1}\|_{\C^{2 \times 2}}\} 
  \le \frac{4}{3} 
\end{equation*}
for $k=0,\ldots,m-1$, it follows that 
\begin{equation*}
  \|\cM Y\|^2_{\C^2} \le 4\|Y\|^2_{\C^2}
  \;\text{ and }\;
  \|\cM^{-1} Y\|^2_{\C^2} 
  \le 4\(\frac43\)^2\|Y\|^2_{\C^2}
  \le 6\|Y\|^2_{\C^2} 
\end{equation*}
for any $Y \in \C^2$ and $\cM = M, M_0, \ldots,M_{m-1}$. 
For the solution $V$ of \eqref{V}, we define $V_j=V_j(t,\xi)$ ($j=1,\ldots,m$) by 
\[
  V_j(t,\xi):=M_{j-1}(t,\xi)^{-1} \cdots M_0(t,\xi)^{-1} M^{-1}V(t,\xi). 
\]
Here we note that the following equalities hold: 
\begin{align*}
  \pa_t V_1
&=\pa_t M_0^{-1}M^{-1}V
 =-M_0^{-1}\(\pa_t M_0\) M_0^{-1} M^{-1}V + M_0^{-1}M^{-1} \pa_t V
\\
&=\(A_1 - M_0^{-1}A_0M_0\)M_0^{-1} M^{-1}V + M_0^{-1}M^{-1} A V
\\
&=A_1 V_1 - M_0^{-1} \(M^{-1}AM\) M^{-1}V + M_0^{-1}M^{-1} A V
\\
&=A_1 V_1
\end{align*}
and
\begin{align*}
  \pa_t V_{k+1}
&=\(\pa_t M_k^{-1}\) V_{k} + M_{k}^{-1} \pa_t V_{k}
 =-M_{k}^{-1} \(\pa_t M_{k}\) M_{k}^{-1} V_{k} 
  + M_{k}^{-1} \pa_t V_{k}
\\
&=\(A_{k+1} - M_k^{-1} A_k M_k\) M_{k}^{-1} V_{k} 
  + M_{k}^{-1} \pa_t V_{k}
\\
&=A_{k+1}V_{k+1} + M_{k}^{-1} \(\pa_t V_{k} - A_k  V_{k}\)
\end{align*}
for $k=1,\ldots,m-1$, 
it follows that 
\begin{equation*}
  \pa_t V_m(t,\xi) = A_m(t,\xi) V_m(t,\xi). 
\end{equation*}
Therefore, we have 
\begin{align*}
  \pa_t \|V_m\|_{\C^2}^2
&=2\Re\(\pa_t V_m,V_m\)_{\C^2}
 =2\Re\(A_m V_m,V_m\)_{\C^2}
\\
&=2\phi_{m\Re} \|V_m\|_{\C^2}^2 
  +2\Re\(\begin{pmatrix} 0 & \ol{r_m}\\ r_m & 0\end{pmatrix} V_m, V_m\)_{\C^2}
\\
&\le \(2\phi_{m\Re} + 2|r_m|\)\|V_m\|_{\C^2}^2,
\end{align*}
and thus 
\begin{align*}
  \|V_m(t,\xi)\|_{\C^2}^2
  \le \exp\(\int^t_0 2\phi_{m\Re}(s,\xi)\,ds\)
      \exp\(\int^t_0 2|r_m(s,\xi)|\,ds\) \|V_m(0,\xi)\|_{\C^2}^2
\end{align*}
for $0 \le t \le t_H(|\xi|)$ by Gronwall's inequality. 
By \eqref{ass1}, \eqref{tDtH} and \eqref{r_k}, we have 
\begin{align*}
  \int^t_0 2|r_m(s,\xi)|\,ds 
&\lesssim |\xi|^{-m}\int^{t_H}_0 \varXi(s)^{m+1} \,ds
  \lesssim \frac{\ze^{-1}(\Th(t_H))^{m+1}}{|\xi|^{m} \Th(t_H)}
 = \frac{\ze^{-1}\(\Th(t_H)\)}{N^m \Th(t_H)}
\\
&= N^{-m} \frac{\ze^{-1}\(\ze\(\frac{|\xi|}{N}\)\)}{\ze\(\frac{|\xi|}{N}\)}
 = N^{-m}\frac{\frac{|\xi|}{N}}{\ze\(\frac{|\xi|}{N}\)}
\\
&\lesssim \vrho\(|\xi|\). 
\end{align*}
Moreover, by \eqref{phi_mR} and $|\de_k|\le 1/2$, we have 
\begin{align*}
  \int^t_0 2\phi_{m\Re}(s,\xi)\,ds
& =-\int^t_0 b(s)\,ds 
  -\log\prod_{k=0}^{m-1}\frac{1-|\de_k(t,\xi)|}{1-|\de_k(0,\xi)|}
\\
& \le -\int^t_0 b(s)\,ds +\log 2^m. 
\end{align*}
Summarizing the estimates above and \eqref{etamu-etab}, there exists a positive constant $\ka_0$ such that 
\begin{align*}
  \|V_m(t,\xi)\|_{\C^2}^2
&\le 2^m \exp\(-\int^t_0 b(s)\,ds\)
     \exp\(\ka_0 \vrho\(|\xi|\)\) \|V_m(0,\xi)\|_{\C^2}^2
\\
&\le \frac{2^m \om_1}{\om(0)} \exp\(-\int^t_0 \mu(\tau)\,d\tau\)
     \exp\(\ka_0 \vrho\(|\xi|\)\) \|V_m(0,\xi)\|_{\C^2}^2. 
\end{align*}
Finally, by \eqref{cE=|V|} and the inequalites 
\begin{align*}
  \|V_m(t,\xi)\|^2_{\C^2}
 =\left\|M_{m-1}^{-1} \cdots M_0^{-1} M^{-1} V(t,\xi)\right\|^2_{\C^2}
 \le 6^{m+1}\|V(t,\xi)\|^2_{\C^2}
\end{align*}
and
\begin{align*}
  \|V(t,\xi)\|^2_{\C^2}
 =\left\|M M_0 \cdots M_{m-1} V_m(t,\xi)\right\|^2_{\C^2}
 \le 4^{m+1}\|V_m(t,\xi)\|^2_{\C^2},
\end{align*}
we conclude the proof of Proposition \ref{Prop1} (ii).
\qed

\subsection{Proof of Proposition \ref{Prop1} (iii) and (iv)} 
We note that the following inequalites hold in $Z_I(N)$: 
\begin{equation}\label{zone_ZI}
  \frac{N}{\mu_0}\mu(t) \le |\xi| \le N
  \;\text{ or }\;
  N \le |\xi| \le N\ze^{-1}\(\Th(t)\). 
\end{equation}
The properties given in the next lemma are essential in the proof of Proposition \ref{Prop1} (iii) and (iv). 
\begin{lemma}\label{lemm-estZI}
Let $A=A(t,\xi)$ be defined by \eqref{A}. 
For any $N \ge 1$, there exist a matrix $\tM = \tM(t,\xi)$ which is uniformly regular in $Z_I(N)$, and a diagonal matrix $\tLa=\tLa(t,\xi)$ such that the following properties hold: 
\begin{itemize}
\item[(i)] 
$\tLa$ is represented by $\tLa=\diag(\tla,\ol{\tla})$ and $\tla=\tla(t,\xi)$ satisfies 
\begin{equation}\label{lemm-estZI-eq1}
  \Re \tla(t,\xi) = -\frac{\mu(t)}{2}. 
\end{equation}
\item[(ii)] 
The matirix $\tR=\tR(t,\xi)$ defined by 
\begin{equation}\label{def_tR}
  \tR(t,\xi):=\tM^{-1} A \tM - \tLa - \tM^{-1}\(\pa_t \tM\)
\end{equation}
satisfies
\begin{equation}\label{lemm-estZI-eq2}
  \int^\infty_{t_D(|\xi|)} \|\tR(s,\xi)\|_{\C^{2 \times 2}}\,ds
  \lesssim 1
\end{equation}
and
\begin{equation}\label{lemm-estZI-eq3}
  \int^\infty_{t_H(|\xi|)} \|\tR(s,\xi)\|_{\C^{2 \times 2}}\,ds
  \lesssim \vrho\(|\xi|\). 
\end{equation}
\end{itemize} 
\end{lemma}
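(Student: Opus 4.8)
The plan is to diagonalize not the full matrix $A$ of \eqref{A} but its principal part. Because $\mathrm{(A1)}$ and the zone description \eqref{zone_ZI} give $\mu(t)/|\xi|\le\mu_0/N<1$ throughout $Z_I(N)$ (in $Z_{I1}$ since $|\xi|\ge N\mu_0^{-1}\mu(t)$, in $Z_{I2}$ since $\mu(t)\le\mu_0$ while $|\xi|\ge N$), the matrix $A_\mu:=\begin{pmatrix}0&i|\xi|\\ i|\xi|&-\mu(t)\end{pmatrix}$ has complex-conjugate eigenvalues $\la_\pm=-\tfrac{\mu(t)}{2}\pm i\sqrt{|\xi|^2-\mu(t)^2/4}$ with $\Re\la_\pm=-\mu(t)/2$, \emph{no matter how large $\si$ becomes}. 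I would therefore set $\tla:=\la_+$, $\tLa:=\diag(\tla,\ol{\tla})$, which gives \eqref{lemm-estZI-eq1} exactly, and let $\tM_0=\tM_0(t,\xi)$ be the diagonalizer of $A_\mu$, obtained by first applying the constant matrix $M$ of \eqref{M} and then Lemma \ref{Lemm-rd} to $M^{-1}A_\mu M$ (whose off-diagonal $\mu/2$ is dominated by $|\xi|$). Uniform regularity of $\tM_0$ on $Z_I(N)$ is immediate because $\mu/|\xi|$ is bounded away from $1$. Diagonalizing $A_\mu$ rather than $A$ is the device that sidesteps the fact that $b=\mu+\si$ may exceed $2|\xi|$ in $Z_I$, which would otherwise destroy the oscillatory diagonalization.

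A direct computation then gives
\[
  \tM_0^{-1} A \tM_0 - \tLa - \tM_0^{-1}(\pa_t \tM_0) = -\si\, P - B_\mu,
\]
where $P:=\tM_0^{-1}\begin{pmatrix}0&0\\0&1\end{pmatrix}\tM_0$ is uniformly bounded of order one with nonzero off-diagonal entries, and $B_\mu:=\tM_0^{-1}(\pa_t\tM_0)$ satisfies $\|B_\mu\|_{\C^{2\times 2}}\lesssim|\mu'(t)|/|\xi|$. The smooth part is harmless: since $\mu'<0$, one has $\int^\infty_{t_D(|\xi|)}\|B_\mu\|\lesssim\mu(t_D(|\xi|))/|\xi|=\mu_0/N\lesssim1$ and likewise $\int^\infty_{t_H(|\xi|)}\|B_\mu\|\lesssim1\lesssim\vrho(|\xi|)$. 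The entire difficulty is concentrated in the oscillatory term $-\si P$, whose norm $|\si|$ need not be integrable; indeed $|\si|$ may be far larger than $|\xi|$ in $Z_I$, which is precisely the regime this paper targets.

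To tame $-\si P$ I would exploit the stabilization structure. Set $S(t):=\int^\infty_t\si(\tau)\,d\tau$, which is well defined, continuous and bounded by $\mathrm{(A1)}$–$\mathrm{(A2)}$, satisfies $S'=-\si$ and $S(t)\to0$. Replacing $\tM_0$ by $\tM:=\tM_0(I+Q)$ with $Q:=S(t)P(t,\xi)$ and using $\pa_t(SP)=-\si P+S\,\pa_t P$ converts the bad term $-\si P$ into $\pa_t Q$, which is absorbed by the change of variables, so that the remainder $\tR$ of \eqref{def_tR} becomes $\tR=[\tLa,Q]-B_\mu-S\,\pa_t P+(\text{terms quadratic in }Q)$. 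Because $\tLa$ is diagonal the commutator is purely off-diagonal, with entries $(\tla-\ol{\tla})\,S\,P_{12}=2i\sqrt{|\xi|^2-\mu^2/4}\,S\,P_{12}$, whence $\|[\tLa,Q]\|\lesssim|\xi|\,|S(t)|$; uniform regularity of $\tM$ follows from $\|Q\|\le(\sup_t|S|)\sup\|P\|$ being controlled (enlarging $N$ if needed).

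Finally, \eqref{stb} reads $\int^\infty_t|S(s)|\,ds\le\Th(t)^{-1}$, so $\int^\infty_t\|[\tLa,Q]\|\lesssim|\xi|\,\Th(t)^{-1}$. At $t=t_D(|\xi|)$ we have $|\xi|\le N$ and $\Th(t_D)^{-1}\le\Th(0)^{-1}$, yielding \eqref{lemm-estZI-eq2}; at $t=t_H(|\xi|)$ the definition \eqref{tDtH} gives $\Th(t_H(|\xi|))=\ze(|\xi|/N)$ with $\ze(r)=r/\vrho(r)$, so $|\xi|\,\Th(t_H)^{-1}=|\xi|\,\ze(|\xi|/N)^{-1}=N\vrho(|\xi|/N)\lesssim\vrho(|\xi|)$, which is exactly \eqref{lemm-estZI-eq3}. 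The main obstacle is precisely this point: pointwise $-\si P$ is not integrable, and the naive cure of dividing the off-diagonal part by the spectral gap $2i\sqrt{|\xi|^2-\mu^2/4}$ produces corrections of size $|\si|/|\xi|$ that need not be small, so $\tM$ could fail to be regular. Trading the pointwise size of $\si$ for the integrated smallness of its antiderivative $S$ via \eqref{stb} is what makes the construction work, and it is also what forces the two different right-hand sides $1$ and $\vrho(|\xi|)$. The remaining care is bookkeeping: the lower-order terms $S\,\pa_t P$ and those quadratic in $Q$ must be absorbed into the same bounds, for which $|\si|\lesssim\varXi$ from \eqref{Cm} together with \eqref{XiTh-thm1} (and, in the matching hyperbolic-zone estimate, \eqref{ass1}) are available, so that every contribution integrates to $\lesssim\vrho(|\xi|)$.
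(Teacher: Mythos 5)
Your overall strategy --- diagonalize only the $\mu$-principal part $A_\mu$ and trade the pointwise size of $\si$ for the integrated smallness of $S(t):=\int_t^\infty\si(\tau)\,d\tau$ via a near-identity corrector --- is the right one and is the paper's strategy in spirit, but your concrete implementation has a genuine gap in the error term you dismiss as ``quadratic in $Q$''. Writing $\tM=\tM_0(I+Q)$ with $Q=SP$, $P=\tM_0^{-1}\diag(0,1)\tM_0$ and $\pa_t Q=-\si P+S\,\pa_t P$, the transformed coefficient is
\[
  (I+Q)^{-1}\(\tLa-\si P\)(I+Q)-(I+Q)^{-1}B_\mu(I+Q)-(I+Q)^{-1}\pa_t Q,
  \qquad B_\mu:=\tM_0^{-1}\pa_t\tM_0,
\]
and after cancelling $-\si P$ against $\pa_t Q$ the remainder still contains $-(I+Q)^{-1}\si PQ=-(I+Q)^{-1}\si S P$ (note $P^2=P$), of norm $\simeq|\si(t)|\,|S(t)|$. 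This is \emph{not} absolutely integrable in precisely the regime the lemma must cover: for $\si_1(t)=(1+t)^p\sin(1+t)^q$ one has $|S(t)|\simeq q^{-1}(1+t)^{p-q+1}|\cos(1+t)^q|$, hence $\int_t^T|\si_1 S|\,ds\simeq (1+T)^{2p-q+2}\to\infty$ whenever $2p-q+2>0$; taking e.g.\ $q=10$, $\nu=3/2$, $p=\tp_1=5$ (admissible in Example \ref{Ex11}) gives $2p-q+2=2$, so $\int_{t_H(|\xi|)}^\infty\|\tR\|\,ds=\infty$ and \eqref{lemm-estZI-eq3} fails for your $\tR$. The conditions \eqref{Cm}, \eqref{XiTh-thm1}, \eqref{ass1} that you invoke to absorb this term control $\varXi^{m+1}|\xi|^{-m}$ on the hyperbolic zone and give no handle on $\int|\si S|$ over $Z_I$. (A smaller, fixable point: invertibility of $I+Q$ needs $\|SP\|<1$, which enlarging $N$ does not provide; one must instead shift the time origin using $S(t)\to0$.)

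The cure is to resum your corrector rather than truncate it at first order. Since $P$ is a projection, $I+SP+\tfrac12S^2P^2+\cdots=I+(\e^{S}-1)P$, and $\tM_0\(I+(\e^{S}-1)P\)=\diag(1,\e^{S})\,\tM_0$. This is exactly the paper's construction, in the opposite order: it first conjugates $A$ by $\tM_1=\diag(1,\om)$ with $\om=\e^{S}$, which removes $\si$ from the diagonal \emph{exactly} (the generator $\diag(0,1)$ is constant, so the conjugation produces no $\si$-dependent error at all), at the price of off-diagonal remainders $\tr_\pm\simeq|\xi|(1-\om)(\om^{-1}\mp1)$ of size $\lesssim|\xi|\,|S(t)|$ only; it then diagonalizes the $\mu$-principal part. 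Your remaining estimates --- the bound $\lesssim|\xi|\,|S(t)|$ for the off-diagonal term, $\int_{t_{D}}^\infty\|B_\mu\|\lesssim\mu(t_D)/|\xi|\lesssim1$, and the evaluations $|\xi|\Th(t_D)^{-1}\lesssim1$ and $|\xi|\Th(t_H)^{-1}=N\vrho(|\xi|/N)\lesssim\vrho(|\xi|)$ --- are correct and coincide with the paper's; only the order of operations (remove $\si$ multiplicatively first, diagonalize second) must be changed so that the remainder is genuinely $O(|\xi|\,|S|)$ with no $|\si|\,|S|$ contribution.
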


If Lemma \ref{lemm-estZI} is valid, then denoting 
\begin{equation*}
  \tV=\tV(t,\xi):=\tM(t,\xi)^{-1} V(t,\xi),
\end{equation*}
we have 
\begin{align*}
  \pa_t \tV
&=\tM^{-1} \pa_t V - \tM^{-1} \(\pa_t \tM\) \tM^{-1} V
\\
&=\tM^{-1} A \tM \tV - \tM^{-1} \(\pa_t \tM\) \tV
 =\(\tLa + \tR\) \tV. 
\end{align*}
Thus, $\tM$ can be thought of as a matrix that performs diagonalization of the principal part of $A$ in $Z_I(N)$, 
that is, the effect of $\si(t)$ is eliminatied from the diagonal components by $\tM$.
In the proof of Lemma \ref{lemm-estZI} below, we construct $\tM$ by a three-step procedure. 

\smallskip
\noindent 
\textit{Proof of Lemma \ref{lemm-estZI}}. 
We define 
$\tM_1=\tM_1(t)$ and $\tA_1=\tA_1(t,\xi)$ by 
\begin{equation*}
  \tM_1(t):=\begin{pmatrix} 1 & 0 \\ 0 & \om(t) \end{pmatrix} 
\end{equation*}
and
\begin{equation*}
  \tA_1(t,\xi):=\tM_1^{-1}A\tM_1 - \tM_1^{-1} \(\pa_t \tM_1\). 
\end{equation*}
Then we have 
\begin{align*}
  \tA_1(t,\xi)
&=\begin{pmatrix} 1 & 0 \\ 0 & \om(t)^{-1} \end{pmatrix}
  \begin{pmatrix} 0 & i|\xi|\om(t) \\ i|\xi| & -b(t)\om(t) \end{pmatrix}
 -\begin{pmatrix} 1 & 0 \\ 0 & \om(t)^{-1} \end{pmatrix}
  \begin{pmatrix} 0 & 0 \\ 0 & -\si(t)\om(t) \end{pmatrix}
\\
&=\begin{pmatrix} 1 & 0 \\ 0 & \om(t)^{-1} \end{pmatrix}
  \begin{pmatrix} 0 & i|\xi|\om(t) \\ i|\xi| & -\mu(t)\om(t) \end{pmatrix}
\\
&=\begin{pmatrix} 
    0 & i|\xi|\om(t) \\ i|\xi|\om(t)^{-1} & -\mu(t)
  \end{pmatrix}. 
\end{align*}

In the next step, we define $\tA_2=\tA_2(t,\xi)$ by 
\begin{equation*}
  \tA_2(t,\xi):=M^{-1} \tA_1 M, 
\end{equation*}
where $M$ was defined by \eqref{M}. 
Let us define $\tPhi_2=\tPhi_2(t,\xi)$, $\tR_2=\tR_2(t,\xi)$ and $\tr_{\pm}=\tr_{\pm}(t,\xi)$ by 
\[
  \tPhi_2(t,\xi)
 :=\begin{pmatrix}
    -\frac{\mu(t)}{2}+i|\xi| & -\frac{\mu(t)}{2} \\
    -\frac{\mu(t)}{2} & -\frac{\mu(t)}{2}-i|\xi|
  \end{pmatrix},
\]
\begin{align*}
  \tR_2(t,\xi):=
  \begin{pmatrix}
    \tr_+(t,\xi) & \ol{\tr_-(t,\xi)} \\ 
    \tr_-(t,\xi) & \ol{\tr_+(t,\xi)} 
  \end{pmatrix}
  \;\text{ and }\;
  \tr_{\pm}(t,\xi):
 =\frac{i|\xi|(1-\om(t))\(\om(t)^{-1} \mp 1\)}{2},
\end{align*}
and note the following equalites:
\begin{align*}
  \om(t)^{-1} + \om(t) - 2 = (1-\om(t))\(\om(t)^{-1} - 1\)
\end{align*}
and
\begin{align*}
  \om(t)^{-1} - \om(t) = (1-\om(t))\(\om(t)^{-1} + 1\). 
\end{align*}
Then we have 
\begin{align*}
  \tA_2
&=\frac12\begin{pmatrix}
    - \mu(t) +i|\xi|\om(t)^{-1} + i|\xi|\om(t) 
  & - \mu(t) -i|\xi|\om(t)^{-1} + i|\xi|\om(t) \\
    - \mu(t) +i|\xi|\om(t)^{-1} -i|\xi|\om(t) 
  & - \mu(t) -i|\xi|\om(t)^{-1} - i|\xi|\om(t) 
  \end{pmatrix}
\\
&=\tPhi_2(t,\xi) + \tR_2(t,\xi).
\end{align*}
The symbol class of $\tr_{\pm}$ is $S^{(m)}\{1,0\}$ in $Z_H(N)$ hence it seems to be crucial for the energy estimate. 
However, later considerations will show us that they do not actually affect the estimates of $\cE(t,\xi)$ in $Z_I(N)$ by (A2) and \eqref{stb}.
The real part of the diagonal components $-\mu(t)/2$ provides the decay order we expect. 
On the other hand, $-\mu(t)/2$ in the off-diagonal components may cancel out the decay obtained from the diagonal components. 

In the next step, we perform further diagonalization procedure so that the off-diagonal components do not affect the decay estimate in $Z_I(N)$ by similar way as in $Z_H(N)$. 
An eigenvalue and the corresponding eigenvector of $\tPhi_2$ are given by 
\begin{equation}\label{tla}
  \tla=\tla(t,\xi)
  =-\frac{\mu(t)}{2} + i|\xi|\sqrt{1-\(\frac{\mu(t)}{2|\xi|}\)^2}
\end{equation}
and ${}^t(1, \tde)$, respectively, where 
\begin{equation*}
  \tde=\tde(t,\xi)
  =i\frac{\mu(t)}{2|\xi|}
  \(1+\sqrt{1-\(\frac{\mu(t)}{2|\xi|}\)^2}\)^{-1}. 
\end{equation*}
Since the other eigenvalue and the corresponding eigenvector are 
$\ol{\tla}$ and ${}^t(\ol{\tde},1)$, respectively, and 
\begin{equation}\label{|tde|}
\begin{split}
  |\tde(t,\xi)|
&=\frac{\mu(t)}{2|\xi|}
  \(1+\sqrt{1-\(\frac{\mu(t)}{2|\xi|}\)^2}\)^{-1}
\le
  \frac{1}{2N}
  \frac{1}{1+\sqrt{1-\(\frac{1}{2N}\)^2}}
\\
&\le \frac{1}{2N} \le \frac12
\end{split}
\end{equation}
in $Z_I(N)$ for $N\ge 1$, by the diagonalizer $\tM_2=\tM_2(t,\xi)$ defined by 
\begin{equation*}
  \tM_2(t,\xi)
  :=\begin{pmatrix} 1 & \ol{\tde} \\ \tde & 1 \end{pmatrix},
\end{equation*}
we have 
\[
  \tM_2^{-1} \tPhi_2 \tM_2 
 =\tLa:=\begin{pmatrix} \tla & 0 \\ 0 & \ol{\tla} 
  \end{pmatrix}. 
\]

In the final step, we prove the estimates \eqref{lemm-estZI-eq2} and \eqref{lemm-estZI-eq3}. 
Let us define $\tM=\tM(t,\xi)$ by 
\[
  \tM:=\tM_1 M \tM_2. 
\]
It is clear that $\tM$ is regular, and furthermore, 
we see that 
\begin{equation}\label{bdd-tM}
  \|\tM\|_{\C^{2 \times 2}} \lesssim 1
  \;\text{ and }\;
  \|\tM^{-1}\|_{\C^{2 \times 2}} \lesssim 1
\end{equation}
in $Z_I(N)$ by noting the following inequalities: 
\begin{equation*}
  \|\tM_1\|_{\C^{2 \times 2}} \le \om_1, \;\;
  \|\tM_1^{-1}\|_{\C^{2 \times 2}} \le \om_0^{-1}, \;\;
  \|\tM_2\|_{\C^{2 \times 2}} \le 1 
  \;\text{ and }\;
  \|\tM_2^{-1}\|_{\C^{2 \times 2}} \le \frac{4}{3}. 
\end{equation*}
Let us define $\tR=\tR(t,\xi)$ by 
\begin{equation}\label{tR}
  \tR:=
  \tM_2^{-1} \tR_2 \tM_2 -\tM_2^{-1} \(\pa_t \tM_2\). 
\end{equation}
Then we can confirm that $\tR$ satisfies \eqref{def_tR} as follows: 
\begin{align*}
  \tR
&=\tM_2^{-1} \(\tA_2 - \tPhi_2\) \tM_2 
 +\tM_2^{-1} M^{-1} \tM_1^{-1} 
  \(\(\pa_t \tM_1\) M \tM_2-\(\pa_t \tM_1 M \tM_2\)\)
\\
&=\tM_2^{-1} M^{-1} \tA_1 M \tM_2 - \tLa 
 +\tM^{-1} \(\pa_t \tM_1\) M \tM_2
 -\tM^{-1} \(\pa_t \tM\)
\\
&=\tM_2^{-1} M^{-1} \(\tM_1^{-1}A\tM_1 
  - \tM_1^{-1} \(\pa_t \tM_1\)\) M \tM_2 
\\
&\quad +\tM^{-1} \(\pa_t \tM_1\) M \tM_2
 - \tLa 
 -\tM^{-1} \(\pa_t \tM\)
\\
&=\tM^{-1} A \tM - \tLa -\tM^{-1} \(\pa_t \tM\), 
\end{align*}
moreover, \eqref{lemm-estZI-eq1} is valid by \eqref{tla}. 
Finally, we shall prove \eqref{lemm-estZI-eq2} and \eqref{lemm-estZI-eq3}. 
Note that 
\begin{align*}
  \tM_2^{-1} \(\pa_t \tM_2\)
 =\frac{1}{1-|\tde|^2}
    \begin{pmatrix} 
      -\ol{\tde} \pa_t \tde & \pa_t \ol{\tde}\\
      \pa_t \tde & -\tde\pa_t \ol{\tde}
    \end{pmatrix},
\end{align*}
\begin{align*}
  \sup_{(t,\xi)\in Z_I(N)}
    \left\{\int^\infty_t |\xi|^{-1}|\mu'(s)| \,ds\right\}
  \le 
  \begin{cases}
    N^{-1} \mu_0 &\text{ in $Z_{I1}(N)$}, \\
    N^{-1} \mu(0) &\text{ in $Z_{I2}(N)$},
  \end{cases}
\end{align*}
\eqref{|tde|}, \eqref{bdd-tM} and the following estimates: 
\begin{align*}
  |\pa_t \tde|
\lesssim 
   \frac{|\mu'(t)|}{|\xi|}
 +\frac{|\mu'(t)|\mu(t)^2}{|\xi|^3}
\lesssim \frac{|\mu'(t)|}{|\xi|},
\end{align*}
we have 
\begin{align*}
&  \sup_{(t,\xi)\in Z_I} \left\{
  \int^\infty_t 
  \left\|\tM_2(s,\xi)^{-1} \(\pa_t \tM_2(s,\xi)\)\right\|_{\C^{2 \times 2}}
   \,ds\right\}
\\
&  \lesssim 
  \sup_{(t,\xi)\in Z_I} 
  \left\{ \int^\infty_t |\xi|^{-1} |\mu'(s)|\,ds\right\}
  \lesssim 1. 
\end{align*}
By mean value theorem, there exists $\th \in (0,1)$ such that 
\begin{align*}
  \left|\om(t)-1\right|
& =\exp\(\th \int^\infty_t \si(\tau)\,d\tau\)
    \left|\int^\infty_t \si(\tau)\,d\tau\right|
\\
&\le \om_1^\th \left| \int^\infty_t \si(\tau)\,d\tau\right|
\le \om_1 \left| \int^\infty_t \si(\tau)\,d\tau\right|.
\end{align*}
Moreovre, note that 
\begin{align*}
  \left|\om(t)^{-1} \pm 1\right| \le \om_0^{-1} + 1 \le 2\om_0^{-1}
\end{align*}
by \eqref{stb} and \eqref{zone_ZI}
we have 
\begin{align*}
  \sup_{t \ge t_D(|\xi|)}
  \left\{\int^\infty_t \left|\tilde{r}_{\pm}(s,\xi)\right| \,ds\right\}
  \le
  \frac{\om_1}{\om_0}|\xi|
  \int^\infty_{t_D(|\xi|)}\left|\int^\infty_s \si(\tau)\,d\tau \right| \,ds
  \le \frac{N \om_1 }{\om_0 \Th(0)}
\end{align*}
and
\begin{align*}
  \sup_{t \ge t_H(|\xi|)}
  \left\{\int^\infty_t \left|\tilde{r}_{\pm}(s,\xi)\right| \,ds\right\}
&\le
 \frac{\om_1}{\om_0}|\xi|\int^\infty_{t_H(|\xi|)} 
    \left|\int^\infty_s \si(\tau)\,d\tau \right| \,ds
 \le
 \frac{\om_1}{\om_0}\frac{|\xi|}{\Th(t_H(|\xi|))}
\\
&=\frac{\om_1}{\om_0}\frac{|\xi|}{\ze\(\frac{|\xi|}{N}\)}
 =\frac{N \om_1}{\om_0} \vrho\(\frac{|\xi|}{N}\)
 \le \frac{N \om_1}{\om_0} \vrho(|\xi|).
\end{align*}
Therefore, by \eqref{bdd-tM} and \eqref{tR}, 
we have \eqref{lemm-estZI-eq2} and \eqref{lemm-estZI-eq3}. 
\qed

\smallskip
\noindent
\textit{Proof of Proposition \ref{Prop1} (iii) and (iv).} 
Let us define $\tV=\tV(t,\xi)$ by 
\begin{equation*}
  \tV(t,\xi):=\tM(t,\xi)^{-1} V(t,\xi). 
\end{equation*}
Then  the following equalties hold: 
\begin{align*}
  \pa_t \tV
&=\tM^{-1} \pa_t V - \tM^{-1} \(\pa_t \tM\) \tM^{-1} V(t,\xi)
\\
&=\tM^{-1} A \tM \tV - \tM^{-1} \(\pa_t \tM\) \tV(t,\xi)
=\(\tLa + \tR\) \tV. 
\end{align*}
By the same way as in the proof of Proposition \ref{Prop1} (ii) and Lemma \ref{lemm-estZI}, we have 
\begin{align*}
  \pa_t \|\tV(t,\xi)\|_{\C^2}^2
&=2\Re\(\pa_t \tV(t,\xi), \tV(t,\xi)\)_{\C^2}
\\
&=-\mu(t) \|\tV(t,\xi)\|_{\C^2}^2
 +2\Re\(\tR \tV(t,\xi), \tV(t,\xi)\)_{\C^2}
\\
& \le \(-\mu(t) + 2\|\tR(t,\xi)\|_{\C^{2 \times 2}}\) \|\tV(t,\xi)\|_{\C^2}^2, 
\end{align*}
it follows that 
\begin{align*}
  \|\tV(t,\xi)\|_{\C^2}^2 
  \lesssim 
  \begin{dcases}
    \exp\(-\int^t_{t_D(|\xi|)} \mu(s) \,ds\) \|\tV(t_D(|\xi|),\xi)\|_{\C^2}^2 
    & \text{\textit{in} $Z_{I1}(N)$},
    \\
    \exp\(-\int^t_{t_H(|\xi|)} \mu(s) \,ds\) \exp\(\ka_0\vrho\(|\xi|\)\)
    \|\tV(t_H(|\xi|),\xi)\|_{\C^2}^2 
    & \text{\textit{in} $Z_{I2}(N)$} 
  \end{dcases}
\end{align*}
for a positive constant $\ka_0$. 
Finally, since $\tM$ and $\tM^{-1}$ are uniformly bounded in $Z_I(N)$, 
and thus 
\begin{equation*}
  \|\tV(t,\xi)\|_{\C^2}^2 \simeq \|V(t,\xi)\|_{\C^2}^2 = \cE(t,\xi)
\end{equation*}
hold, we conclude the proof of Proposition \ref{Prop1} (iii) and (iv). 
\qed

\subsection{Proof of the main theorems} 

For $t \in \R_+$ we define $\Om_j(t) \subset \R^n$ and $I_j(t)$ ($j=1,2,3,4$) by 
\begin{align*}
 &\Om_1(t) := \left\{\xi \in \R^n\;;\; |\xi|\le N\mu_0^{-1}\mu(t) \right\}, 
\\
 &\Om_2(t) := \left\{\xi \in \R^n\;;\; N\mu_0^{-1}\mu(t) \le |\xi|\le N
 \right\}, 
\\
 &\Om_3(t) := \left\{\xi \in \R^n\;;\; N \le |\xi| \le N\ze^{-1}(\Th(t)) \right\}, 
\\
 &\Om_4(t) := \left\{\xi \in \R^n\;;\; |\xi| \ge N\ze^{-1}(\Th(t)) \right\}, 
\end{align*}
and
\begin{align*}
  I_j(t) := \int_{\Om_j(t)} \cE(t,\xi) \, d\xi.
\end{align*}
Here we note that the following equalities hold 
\[
  E(t;u)=\int_{\R^n}\cE(t,\xi;v)\,d\xi
 =\sum_{j=1}^4 I_j(t). 
\]
Since $t \le t_D(|\xi|)$ for $\xi \in \Om_1(t)$, by Proposition \ref{Prop1} (i) and Parseval's theorem, we have 
\begin{align*}
  I_1(t)
 &\lesssim
  \exp\(-2\int^t_0 \mu(\tau)\,d\tau\)
  \int_{\Om_1(t)} \(|v_0(\xi)|^2 + |v_1(\xi)|^2 \) \, d\xi
\\
 &\le 
  \exp\(-2\int^t_0 \mu(\tau)\,d\tau\)\(\|u_0(\cd)\|^2+\|u_1(\cd)\|^2\).
\end{align*}
Analogously, since 
$t \ge t_D(|\xi|)$, $t \ge t_H(|\xi|)$, and $t \le t_H(|\xi|)$ 
for $\xi \in \Om_2(t)$, $\xi \in \Om_3(t)$, and $\xi \in \Om_4(t)$, 
respectively, we have the following estimates by Proposition \ref{Prop1}: 
\begin{align*}
  I_2(t)
&\lesssim
  \int_{\Om_2(t)} \exp\(-\int^t_{t_D} 
  \mu(\tau)\,d\tau\)\cE(t_D,\xi)\,d\xi
\\
&\lesssim
  \int_{\Om_2(t)} 
  \exp\(-\int^t_{t_D} 
  \mu(\tau)\,d\tau-2\int^{t_D}_0 \mu(\tau)\,d\tau\)
  \(|v_0(\xi)|^2 + |v_1(\xi)|^2 \)
  \,d\xi
\\
&\le
  \exp\(-\int^t_{0} \mu(\tau)\,d\tau\)\(\|u_0(\cd)\|^2+\|u_1(\cd)\|^2\),
\end{align*}
\begin{align*}
  I_3(t)
&\lesssim
  \int_{\Om_3(t)} \exp\(-\int^t_{t_H} \mu(\tau)\,d\tau\)
  \exp\(\ka_0 \vrho\(|\xi|\)\)
  \cE(t_H,\xi)\,d\xi
\\
&\lesssim
  \exp\(-\int^t_0 \mu(\tau)\,d\tau\)
  \int_{\Om_3(t)} 
  \exp\(2\ka_0 \(|\xi|\)\)
  \cE(0,\xi)
  \,d\xi
\\
&\lesssim
  \begin{dcases}
    \exp\(-\int^t_0 \mu(\tau)\,d\tau\) \(\|\nabla u_0(\cd)\|^2+\|u_1(\cd)\|^2\)
      & \text{\textit{if $\vrho(r)=1$}},\\
    \exp\(-\int^t_0 \mu(\tau)\,d\tau\) \|(u_0,u_1)\|^2_{\cH(\rho,\ka_0)}
      & \text{\textit{if $\vrho(r)=\rho(r)$}},
  \end{dcases}
\end{align*}
and
\begin{align*}
  I_4(t)
&\lesssim
  \exp\(-\int^t_0 \mu(\tau)\,d\tau\) \int_{\Om_4(t)} 
  \exp\(\ka_0\vrho\(|\xi|\)\)
  \cE(0,\xi)\,d\xi
\\
&\lesssim
  \begin{dcases}
    \exp\(-\int^t_0 \mu(\tau)\,d\tau\) \(\|\nabla u_0(\cd)\|^2+\|u_1(\cd)\|^2\)
      & \text{\textit{if $\vrho(r)=1$}},\\
    \exp\(-\int^t_0 \mu(\tau)\,d\tau\) \|(u_0,u_1)\|^2_{\cH(\rho,\ka_0)}
      & \text{\textit{if $\vrho(r)=\rho(r)$}}. 
  \end{dcases}
\end{align*}
The above estimates and Lemma \ref{Lemm-cH} (i) conclude the proof of Theorem \ref{Thm0} and Theorem \ref{Thm1}. 
\qed

%
\section{Appendix}
%

\subsection{Verification of Example \ref{Ex1} and Example \ref{Ex11}}
Let the parameters $p$ and $q$ satisfy 
\[
  p \ge -1, \;\;
  q>1
  \;\text{ and }\;
  p < q-2, 
\]
and the dippative coefficient $b(t)$ be defined in Example \ref{Ex1} as follows: 
\[
  b(t)=\mu_0(1+t)^{-1} + \si_1(t)
  \;\text{ and }\;
  \si_1(t)= (1+t)^p \sin (1+t)^q. 
\]
Then we have the following lemma: 

\begin{lemma}\label{lemm1-Ex1}
The following estimate holds:
\begin{equation}\label{est_intsi1}
  \int^\infty_t 
  \left|\int^\infty_s \si_1(\tau) \,d\tau\right|
  \,ds
  \lesssim (1+t)^{p-q+2}. 
\end{equation}
\end{lemma}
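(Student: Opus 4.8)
The plan is to extract the decay purely from the oscillation of $\sin(1+\tau)^q$ by a single integration by parts, trading one power of the oscillating factor for a gain of $(1+\tau)^{1-q}$ in the amplitude. The starting point is the elementary identity
\[
  \sin(1+\tau)^q
  = -\frac{1}{q}(1+\tau)^{1-q}\frac{d}{d\tau}\cos(1+\tau)^q,
\]
which rewrites the integrand as $\si_1(\tau) = -q^{-1}(1+\tau)^{p-q+1}\,\dfrac{d}{d\tau}\cos(1+\tau)^q$. Because $q>1$, this lowers the effective amplitude exponent from $p$ to $p-q+1$, and it is this gain that drives the whole estimate.

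First I would treat the inner integral. Integrating by parts with the antiderivative $\cos(1+\tau)^q$ gives
\[
  \int^\infty_s \si_1(\tau)\,d\tau
  = \frac{1}{q}(1+s)^{p-q+1}\cos(1+s)^q
  + \frac{p-q+1}{q}\int^\infty_s (1+\tau)^{p-q}\cos(1+\tau)^q\,d\tau.
\]
The boundary contribution at $\tau=\infty$ vanishes since $p-q+1<0$ while $\cos$ stays bounded, and the remaining integral has integrable amplitude because $p-q<-1$, so that $\int^\infty_s (1+\tau)^{p-q}\,d\tau \lesssim (1+s)^{p-q+1}$. Combining the two contributions yields the pointwise bound
\[
  \left|\int^\infty_s \si_1(\tau)\,d\tau\right| \lesssim (1+s)^{p-q+1}.
\]

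Finally I would integrate this over $s\in[t,\infty)$. The hypothesis $p<q-2$ forces $p-q+1<-1$, so $\int^\infty_t (1+s)^{p-q+1}\,ds$ converges and is bounded by a constant multiple of $(1+t)^{p-q+2}$, which is exactly the asserted estimate \eqref{est_intsi1}. The three assumptions enter transparently: $q>1$ is what makes the integration by parts gain amplitude, the condition $p<q-1$ is what makes the boundary term vanish and the remainder integral converge at the inner stage, and the strictly stronger $p<q-2$ is precisely the threshold needed for the outer $s$-integral to converge.

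The step I expect to demand the most care is the bookkeeping of exponents after the integration by parts, namely verifying that a \emph{single} integration by parts already produces an amplitude decaying fast enough that no second one is required, and confirming that the binding constraint among the convergence requirements is exactly $p<q-2$ rather than the weaker $p<q-1$. All the oscillatory cancellation is concentrated in this one step; once the pointwise bound on the inner integral is secured, the outer integration is a routine power-law estimate.
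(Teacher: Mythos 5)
Your proof is correct and follows essentially the same route as the paper: the paper substitutes $1+\theta=(1+\tau)^q$ to reduce the inner integral to the standard oscillatory estimate \eqref{est-HN} (which is itself an integration by parts), obtaining the same pointwise bound $(1+s)^{p-q+1}$ before integrating in $s$. Your version simply carries out the integration by parts directly and self-containedly, and your accounting of which hypothesis is binding ($p<q-2$ for the outer integral versus $p<q-1$ for the inner one) is accurate.
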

\begin{proof}
We note that the following well-known estimate holds for any $\nu>0$: 
\begin{equation}\label{est-HN}
  \left|\int^\infty_s (1+\th)^{-\nu} \sin(1+\th)\,d\th\right|
  \lesssim (1+s)^{-\nu}. 
\end{equation}
For a proof, see e.g. \cite{HN18}. 
By \eqref{est-HN} with $\nu = (p-q+1)/q$, we have 
\begin{align*}
  \int^\infty_s (1+\tau)^{p} \sin(1+\tau)^q \,d\tau
&=\frac{1}{q}\int^\infty_{(1+t)^q-1} 
   (1+\th)^{\frac{p-q+1}{q}} \sin(1+\th) \,d\th
\\
&\lesssim
  (1+t)^{p-q+1},  
\end{align*}
it follows that \eqref{est_intsi1} since $p-q+1<-1$. 
\end{proof}

\begin{lemma}\label{lemm2-Ex1}
For any positive integer $m$, the following estimates hold: 
\begin{equation*}
  \left|\frac{d^k}{dt^k} b(t)\right|
  \lesssim (1+t)^{-\be_1(k+1)}, 
  \;\; k=0,\ldots,m. 
\end{equation*}
\end{lemma}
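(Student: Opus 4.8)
The plan is to split $b(t)$ into its principal part $\mu_0(1+t)^{-1}$ and the oscillating part $\si_1(t)$, and to bound the $k$-th derivative of each separately by $(1+t)^{-\be_1(k+1)}$ for $k=0,\ldots,m$. For the principal part I would differentiate directly: $\left|\frac{d^k}{dt^k}\{\mu_0(1+t)^{-1}\}\right| = \mu_0\,k!\,(1+t)^{-1-k}$, so it suffices to check the exponent inequality $-1-k \le -\be_1(k+1)$, i.e.\ $\be_1 \le 1$. Since $\be_1 = 1 - q + (q-1-p)/(m+1)$, this reduces to $p \ge -1-qm$, which is immediate from $p \ge -1$ and $qm>0$; on any bounded $t$-set both sides are comparable, so the $\lesssim$ holds globally.

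The substance of the lemma is the oscillating term, and here I would first bound the derivatives of $\sin((1+t)^q)$. Writing $\phi(t)=(1+t)^q$, so that $\phi^{(i)}(t)=c_i(1+t)^{q-i}$ with $c_i=q(q-1)\cdots(q-i+1)$, I would apply Fa\`a di Bruno's formula \eqref{FBF} with $F=\sin$ and $G=\phi$: each resulting term is a bounded trigonometric factor times $\prod_i(\phi^{(i)})^{h_i}$ subject to $\sum_i i\,h_i = l$, hence of order $\sum_i h_i(q-i) = q\sum_i h_i - l$ in $(1+t)$. Since $q>1$ and $1+t\ge1$, this is largest when all $i=1$ (so $\sum_i h_i = l$), giving $\left|\frac{d^l}{dt^l}\sin((1+t)^q)\right| \lesssim (1+t)^{l(q-1)}$. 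The Leibniz rule then yields
\[
  \left|\frac{d^k}{dt^k}\si_1(t)\right|
  \lesssim \sum_{j=0}^k (1+t)^{p-j}(1+t)^{(k-j)(q-1)}
  = \sum_{j=0}^k (1+t)^{\,p+k(q-1)-jq},
\]
and since the exponent is decreasing in $j$, the $j=0$ term dominates, giving $\left|\frac{d^k}{dt^k}\si_1(t)\right| \lesssim (1+t)^{p+k(q-1)}$.

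Finally I would verify $p+k(q-1)\le -\be_1(k+1)$ for $0\le k\le m$. Using $-\be_1 = \big((q-1)m+p\big)/(m+1)$ and clearing the factor $m+1$, this collapses to $(m-k)\big(p-(q-1)\big)\le0$, which holds because $m-k\ge0$ and $p-(q-1)=p-q+1<-1<0$ by the standing hypothesis $p<q-2$ (with equality exactly at $k=m$, matching the borderline $\be_1\neq1/(m+1)$ exclusion). Together with the principal-part estimate this closes both bounds.

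I expect the chain-rule bookkeeping for $\sin((1+t)^q)$—tracking that differentiating the sine, rather than the amplitude $(1+t)^p$, is what produces the maximal power $(1+t)^{p+k(q-1)}$—to be the only real obstacle; once that order is pinned down, the concluding comparison is the routine arithmetic identity $(m-k)(p-q+1)\le0$, in which the roles of $k\le m$ and of the amplitude/oscillation balance (only $p<q-1$ is actually needed here) become transparent.
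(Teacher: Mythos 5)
Your proposal is correct and follows essentially the same route as the paper: split off the principal part via $\be_1\le 1$, apply the Leibniz rule to $(1+t)^p\sin(1+t)^q$, identify the $j=0$ term $(1+t)^{p+k(q-1)}$ as dominant, and check the exponent inequality (your factored form $(m-k)(p-q+1)\le 0$ is the same arithmetic fact the paper verifies by writing $p+k(q-1)=-(k+1)\bigl(-q+1+\tfrac{-p+q-1}{k+1}\bigr)$ and using monotonicity in $k$). Your Fa\`a di Bruno bookkeeping for $\frac{d^l}{dt^l}\sin(1+t)^q\lesssim (1+t)^{l(q-1)}$ just makes explicit a step the paper takes for granted.
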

\begin{proof}
We note that 
\begin{align*}
  \be_1
 =-q+1+\frac{-p+q-1}{m+1}
  < -q + 1 + (-p + q - 1)
  \le 1. 
\end{align*}
Let $0 \le k \le m$. 
By Leibniz rule, we have 
\begin{align*}
  \left|\frac{d^k}{dt^k}\si_1(t)\right|
 &=\left|\sum_{j=0}^k \binom{k}{j}
    \(\frac{d^j}{dt^{j}}(1+t)^{p}\)
    \(\frac{d^{k-j}}{dt^{{k-j}}}\sin(1+t)^{q}\) \right|
\\
&\lesssim \sum_{j=0}^k (1+t)^{p-j+(k-j)(q-1)}
  =\sum_{j=0}^k (1+t)^{p+(k-j)q-k}
  \lesssim (1+t)^{p+k(q-1)}
\\
&=(1+t)^{-(k+1)\(-q+1+\frac{-p+q-1}{k+1}\)}
  \le (1+t)^{-(k+1)\(-q+1+\frac{-p+q-1}{m+1}\)}
\\
&  = (1+t)^{-\be_1(k+1)}.
\end{align*}
Moreover, the following estimates are trivial: 
\[
  \left|\frac{d^k}{dt^k}\mu_0(1+t)^{-1}\right|
  \lesssim (1+t)^{-(k+1)}
  \le (1+t)^{-\be_1(k+1)}. 
\]
\end{proof}

\begin{lemma}\label{lemm3-Ex1}
If $p \le \tilde{p}_1(q,\nu)$, then \eqref{XiTh-thm1} and \eqref{ass1} are vaild. 
\end{lemma}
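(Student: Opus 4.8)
The plan is to reduce both \eqref{XiTh-thm1} and \eqref{ass1} to elementary power-counting in $(1+t)$, exploiting the polynomial form of $\Th$, $\varXi$ and $\ze$. First I would record the behavior of $\ze^{-1}$: since $\rho=\rho_\nu$ gives $\ze(r)=r(1+r)^{-1/\nu}\simeq(1+r)^{(\nu-1)/\nu}$, which is strictly increasing to infinity for $\nu>1$, its inverse satisfies $\ze^{-1}(s)\simeq s^{\nu/(\nu-1)}$ for large $s$. Because $\al_1=p-q+2<0$ under the standing hypothesis $p<q-2$, we have $\Th(t)=(1+t)^{-\al_1}\to\infty$, so that $\ze^{-1}(\Th(t))\simeq(1+t)^{-\al_1\nu/(\nu-1)}$ is legitimate. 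Since $\varXi$, $\Th$ and $\ze^{-1}\circ\Th$ are positive and continuous, the suprema in \eqref{XiTh-thm1} and \eqref{ass1} are finite if and only if the corresponding powers of $(1+t)$ have nonpositive exponent as $t\to\infty$; the contribution of bounded $t$ is automatically controlled by continuity.

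The binding constraint comes from \eqref{ass1}, which I would treat next. Writing $\int_0^t\varXi(s)^{m+1}\,ds=\int_0^t(1+s)^{-\be_1(m+1)}\,ds$ and recalling $\be_1\neq1/(m+1)$, there are two cases according to whether $\be_1(m+1)$ is larger or smaller than $1$. If $\be_1(m+1)>1$ the integral is bounded, while the prefactor $\Th(t)\,\ze^{-1}(\Th(t))^{-(m+1)}\simeq(1+t)^{\al_1(m\nu+1)/(\nu-1)}$ decays (as $\al_1<0$), so \eqref{ass1} holds trivially. In the generic case $\be_1(m+1)<1$ the integral is $\simeq(1+t)^{1-\be_1(m+1)}$, and the whole expression in \eqref{ass1} behaves like $(1+t)^{\mathcal E}$ with
\[
  \mathcal E=-\al_1+\frac{\al_1(m+1)\nu}{\nu-1}+1-\be_1(m+1).
\]
A short rearrangement, using $\al_1 m+\al_1(m+1)/(\nu-1)=\al_1(m\nu+1)/(\nu-1)=-\al_1+\al_1(m+1)\nu/(\nu-1)$, shows that $\mathcal E$ equals the expression $\al_1 m-\be_1(m+1)+1+\al_1(m+1)/(\nu-1)$ appearing in Example \ref{Ex11}, which in turn equals $(m+1)\bigl[(p+1)+(p-q+2)/(\nu-1)\bigr]$. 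Hence $\mathcal E\le0$ is exactly equivalent to $p\le-1+(q-1)/\nu=\tp_1$, giving \eqref{ass1}.

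It remains to verify \eqref{XiTh-thm1}, which I would deduce from the condition just obtained rather than check independently. The relevant exponent is $\al_1\nu/(\nu-1)-\be_1$, and I would show it is negative by comparing $\be_1$ with $\tilde{\be}_0$: since $p\le\tp_1$ is equivalent (as noted in Example \ref{Ex0}) to $\be_1\ge\tilde{\be}_0$, and a direct computation gives $\tilde{\be}_0-\al_1\nu/(\nu-1)=(1-\al_1)/(m+1)>0$ (using $\al_1<0<1$), we obtain $\be_1\ge\tilde{\be}_0>\al_1\nu/(\nu-1)$, so the exponent $\al_1\nu/(\nu-1)-\be_1$ is strictly negative and \eqref{XiTh-thm1} follows. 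The only genuinely delicate point is the bookkeeping of exponents together with the case split on $\be_1(m+1)$ in the integral; once the identity $\mathcal E=\al_1 m-\be_1(m+1)+1+\al_1(m+1)/(\nu-1)$ is established, everything collapses to the single inequality $p\le\tp_1$.
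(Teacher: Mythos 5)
Your proposal is correct and rests on the same key computation as the paper: reducing everything to the exponent inequality $\al_1 m-\be_1(m+1)+1+\al_1(m+1)/(\nu-1)=(m+1)\bigl[(p+1)+(p-q+2)/(\nu-1)\bigr]\le 0$, which is equivalent to $p\le\tp_1$. The only difference is that the paper simply cites Example \ref{Ex0} (ii) for the implication to \eqref{XiTh-thm1} and \eqref{ass1}, whereas you carry out the underlying power-counting (including the case split on $\be_1(m+1)\gtrless 1$ and the separate check of \eqref{XiTh-thm1}) explicitly, which the paper leaves unproved; your bookkeeping there is accurate.
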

\begin{proof}
If $p \le \tilde{p}_1(q,\nu) = -1 + (q-1)/\nu$, that is, 
\[
  p+1+\frac{p-q+2}{\nu-1} \le 0 
\]
holds, then we have 
\begin{align*}
  \al_1 m - \be_1 (m+1)+ 1 + \frac{\al_1(m+1)}{\nu-1}
 =\(p+1+\frac{p-q+2}{\nu-1}\)(m+1)
 \le 0. 
\end{align*}
It follows that \eqref{XiTh-thm1} and \eqref{ass1} are valid by Example \ref{Ex0} (ii) with $\al=\al_1$ and $\be=\be_1$. 
\end{proof}

\subsection{Verification of Example \ref{Ex2}}
Let the dissipative coefficient $b(t)$ be defined in Example \ref{Ex2} as follows: 
\[
  b(t):=\mu_0(1+t)^{-1} + \si_2(t), 
\]
where $\si_2(t)$ is defined by \eqref{si2}, that is, 
\[
\si_2(t)=
\begin{cases}
  t_n^p \chi\(t_n^{q-1} (t-t_n) \), 
    & t \in [t_n, t_n + N_n t_n^{-q+1}] \;\; (n=1,2,\ldots),\\[1mm]
  0, 
  & t \in \R_+ \setminus \bigcup_{n=1}^\infty 
  [t_n, t_n + N_n t_n^{-q+1}]. 
\end{cases}
\]
Here we note that 
\[
  N_n t_n^{-q+1} = \lfloor n^h \rfloor n^{-r(q-1)}
  \simeq n^{- r(q-1) + h }
  \simeq t_n^{- q +1 + \frac{h}{r}},
\]
and we assumed that 
\begin{equation*}
  p \ge -1, \;\;
  q>1, \;\;
  r \ge 1, \;\;
  h \ge 0,
\end{equation*}
\begin{equation*}
  h \le rq - 1
  \;\text{ and }\;
  p\le -1 + \frac{m}{m+1}\(q-\frac{h+1}{r}\). 
\end{equation*}
Then we see that 
\begin{equation}\label{pqrh}
  p < q - 1 -\frac{h+1}{r} \le 
  \begin{dcases}
    q - 1,\\
    2q - 2 - \frac{h+1}{r}.
  \end{dcases}
\end{equation}
We shall show \eqref{al2}, that the following lemma holds: 
\begin{lemma}\label{lemm1-Ex2}
The following estimate holds:  
\begin{equation*}
  \int^\infty_t \left|\int^\infty_s \si_2(\tau)\,d\tau\right|\,ds
  \lesssim (1+t)^{p - 2q + 2 + \frac{h+1}{r}}.
\end{equation*}
\end{lemma}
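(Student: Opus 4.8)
The plan is to exploit the zero-mean property $\int_0^1\chi(\tau)\,d\tau=0$ of the building block $\chi$, which forces the inner integral $G(s):=\int_s^\infty\si_2(\tau)\,d\tau$ to be supported only on the oscillation intervals and to be pointwise small there. First I would fix notation: write $I_n:=[t_n,\,t_n+N_n t_n^{-q+1}]$ for the $n$-th oscillation interval. Because $\chi^{(k)}(0)=\chi^{(k)}(1)=0$ for $k=0,\ldots,m$, the $1$-periodic extension of $\chi$ is $C^m$, so on $I_n$ the function $\si_2$ defined by \eqref{si2} consists of exactly $N_n$ consecutive copies of the bump $t_n^p\chi$, each supported on a subinterval of length $t_n^{-q+1}$ (this is precisely what makes $\si_2\in C^m(\R_+)$ and what fixes the argument of $\chi$ running over $[0,N_n]$). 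Since $\int_0^1\chi=0$, each full period integrates to zero, hence $\int_{I_n}\si_2(\tau)\,d\tau=0$ for every $n$.

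The first key step is to determine the support and size of $G$. Because every complete interval $I_k$ contributes zero, for $s$ lying in a gap $\R_+\setminus\bigcup_n I_n$ we have $G(s)=\sum_{k:\,t_k>s}\int_{I_k}\si_2=0$; and for $s\in I_n$ only the single partial period containing $s$ survives, all later complete periods within $I_n$ and all later intervals cancelling. Estimating that one partial period by $t_n^{p}\cdot t_n^{-q+1}\cdot\max_{[0,1]}|\chi|$ gives the pointwise bound
\[
  |G(s)| \lesssim t_n^{\,p-q+1}\qquad(s\in I_n),
\]
together with $G(s)=0$ off $\bigcup_n I_n$.

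Next I would integrate this bound over each interval. Since $|I_n|=N_n t_n^{-q+1}$ with $N_n\simeq n^h$ and $t_n=n^r$,
\[
  \int_{I_n}|G(s)|\,ds \lesssim N_n t_n^{-q+1}\cdot t_n^{\,p-q+1}
  = N_n t_n^{\,p-2q+2}\simeq n^{\,h+r(p-2q+2)}.
\]
Finally, for $t$ with $t_{n_0}\le t<t_{n_0+1}$, the integral $\int_t^\infty|G(s)|\,ds$ reduces, after discarding the vanishing gaps, to a tail sum $\sum_{n\ge n_0}n^{\,h+r(p-2q+2)}$. The hypothesis \eqref{pqrh}, namely $p<2q-2-(h+1)/r$, is exactly what makes the exponent $h+r(p-2q+2)$ strictly less than $-1$, so the series converges and its tail is $\simeq n_0^{\,h+r(p-2q+2)+1}$. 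Using $n_0\simeq t^{1/r}$ and
\[
  \frac{h+r(p-2q+2)+1}{r}=p-2q+2+\frac{h+1}{r}
\]
yields the claimed bound $(1+t)^{p-2q+2+(h+1)/r}$ for large $t$, the bounded range of small $t$ being absorbed into the implicit constant.

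The main obstacle is the first step: pinning down that $G$ is supported only on the oscillation intervals and controlling the surviving partial period cleanly. This is where all three hypotheses on $\chi$ enter — the zero mean produces the interval-by-interval cancellation, the bounded amplitude controls the partial period, and the vanishing of the derivatives up to order $m$ at the endpoints guarantees that the periodic extension is genuinely $C^m$ so that no spurious boundary contributions arise. Once the pointwise estimate on $G$ is secured, the remaining summation is routine.
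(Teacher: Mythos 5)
Your proof is correct and follows essentially the same route as the paper's: you exploit the zero-mean of $\chi$ to cancel all complete periods, obtain the pointwise bound $|\int_s^\infty\si_2|\lesssim t_n^{p-q+1}$ on the $n$-th oscillation interval and zero on the gaps, integrate over each interval to get $N_n t_n^{p-2q+2}$, and sum the tail using that \eqref{pqrh} makes the exponent $h+r(p-2q+2)$ strictly less than $-1$. The only cosmetic difference is that you phrase the surviving contribution as ``the partial period containing $s$'' while the paper isolates it via the index $l$ with $t_k+(l-1)t_k^{-q+1}\le s<t_k+lt_k^{-q+1}$; the estimates and exponents agree exactly.
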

\begin{proof}
Let $s\in[t_k,t_{k+1})$. 
Here we note that 
\begin{equation}\label{t-simeq-hk}
  s \le \frac{t_{k+1}}{t_k} t_k
  =\(\frac{k+1}{k}\)^r t_k \le 2^r t_k
\end{equation}
holds. 
If $s \ge t_k + N_k t_{k}^{-q+1}$, then 
$\int^{t_{k+1}}_{s} \si_2(\tau)\,d\tau = 0$. 
On the other hand, if $s < t_k + N_k t_{k}^{-q+1}$, then there exists 
$l \in \N$ such that 
\[
  t_k + (l-1) t_{k}^{-q+1} \le s < t_k + l t_{k}^{-q+1}
\]
and 
\begin{align*}
  \int^{t_{k+1}}_{s} \si_2(\tau)\,d\tau
&=\int^{t_k + l t_{k}^{-q+1}}_{s} \si_2(\tau)\,d\tau
  +\sum_{j = l+1}^{N_k}
   \int^{t_k + j t_{k}^{-q+1}}_{t_k + (j-1) t_{k}^{-q+1}} \si_2(\tau)\,d\tau
\\
&=\int^{t_k + l t_{k}^{-q+1}}_{s} \si_2(\tau)\,d\tau,
\end{align*}
it follows that 
\begin{align*}
  \left|\int^{t_{k+1}}_{s} \si_2(\tau)\,d\tau\right|
  \le\int^{t_k + l t_{n}^{-q+1}}_{s}|\si_2(\tau)|\,d\tau
  \le\int^{t_k + l t_{n}^{-q+1}}_{t_k+(l-1)t_k^{-q+1}}t_k^p\,d\tau
  \le t_{k}^{p-q+1}. 
\end{align*}
Hence we have 
\begin{align*}
  \left|\int^{\infty}_{s} \si_2(\tau)\,d\tau\right|
  \le 
  \begin{cases}
    t_{k}^{p-q+1}, 
      & s \in \bigcup_{k=1}^\infty 
        \(t_k, t_k + N_k t_{k}^{-q+1}\),\\[2mm]
    0, & s \in \bigcup_{k=1}^\infty 
        \left[t_k + N_k t_{k}^{-q+1}, t_{k+1}\right].
  \end{cases}
\end{align*}
For any fixed $t \in \R_+$, we set $n \in \N$ satisfying 
$t_n \le t < t_{n+1}$. 
Since 
\[
  r(p-2q+2) + h = -1 - r\(-p+2q-2 - \frac{h+1}{r}\) < -1 
\]
by \eqref{pqrh}, we have 
\begin{align*}
  \int^\infty_t \left|\int^\infty_s \si_2(\tau)\,d\tau\right|\,ds
&\le \sum_{k=n}^\infty
  \int^{t_{k+1}}_{t_k} 
  \left|\int^\infty_s \si_2(\tau)\,d\tau\right|\,ds
\\
&\le \sum_{k=n}^\infty \int^{t_k + N_k t_{k}^{-q+1}}_{t_k} t_k^{p-q+1} \,ds
 =\sum_{k=n}^\infty N_k t_{k}^{p-2q+2}
\\
& \le \sum_{k=n}^\infty k^{r(p-2q+2) + h}
  \lesssim n^{r(p-2q+2) + h+1}
\\
&
= t_n^{p-2q+2 + \frac{h+1}{r}}
\le \(2^{-r} t\)^{p-2q+2 + \frac{h+1}{r}}
\\
&\simeq (1+t)^{p-2q+2 + \frac{h+1}{r}}. 
\end{align*}
\end{proof}

\begin{lemma}\label{lemm2-Ex2}
The following estimates hold: 
\begin{equation*}
  \left|\frac{d^k}{dt^k} \si_2(t)\right|
  \lesssim (1+t)^{(k+1)\(-q + 1 + \frac{-p+q-1}{m+1}\)},
  \;\; k=0,\ldots,m. 
\end{equation*}
\end{lemma}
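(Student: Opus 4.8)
The plan is to reduce the estimate to the size of $\chi$ and its derivatives on a single oscillation interval and then transport this bound to all of $\R_+$ via the uniform comparison $t_n\simeq 1+t$, in complete parallel with the proof of Lemma \ref{lemm2-Ex1}; the only new ingredient is the self-similar rescaling built into \eqref{si2}.

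First I would fix $n$ and work on $[t_n,t_n+N_nt_n^{-q+1}]$, where, interpreting $\chi$ as its $1$-periodic extension (legitimate precisely because $|\chi^{(k)}(0)|=|\chi^{(k)}(1)|=0$ for $k=0,\dots,m$, so that the extension is $C^m$), one has $\si_2(t)=t_n^{p}\chi\(t_n^{q-1}(t-t_n)\)$. Setting $u=t_n^{q-1}(t-t_n)$, each differentiation in $t$ contributes one factor $t_n^{q-1}$, so that for $0\le k\le m$,
\[
  \frac{d^k}{dt^k}\si_2(t)=t_n^{\,p+(q-1)k}\,\chi^{(k)}\(t_n^{q-1}(t-t_n)\).
\]
Since $\chi\in C^m([0,1])$, each $\chi^{(k)}$ is bounded by $\max_{[0,1]}|\chi^{(k)}|$, so $|d^k\si_2/dt^k|\lesssim t_n^{\,p+(q-1)k}$ on the interval, with a constant independent of $n$. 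Off $\bigcup_{n}[t_n,t_n+N_nt_n^{-q+1}]$ we have $\si_2\equiv0$, and the vanishing endpoint conditions force all derivatives up to order $m$ to match (namely to vanish) at every junction, so $\si_2\in C^m(\R_+)$ and the pointwise bound extends to all of $\R_+$.

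Next I would pass from $t_n$ to $1+t$: on the $n$-th oscillation interval $t_n\le t\le t_{n+1}=(n+1)^{r}\le 2^{r}n^{r}=2^{r}t_n$ (where $t\le t_{n+1}$ uses $t_n+N_nt_n^{-q+1}\le t_{n+1}$, i.e. $h\le rq-1$), so $t_n\simeq 1+t$ uniformly in $n$ and hence $|d^k\si_2/dt^k|\lesssim(1+t)^{\,p+(q-1)k}$. The exponent is then matched exactly as for $\si_1$: from the identity $p+(q-1)k=-(k+1)\(-q+1+\tfrac{-p+q-1}{k+1}\)$, the positivity $-p+q-1>0$ (a consequence of $p<q-1-\tfrac{h+1}{r}$, see \eqref{pqrh}), and $k\le m$, one gets $-q+1+\tfrac{-p+q-1}{k+1}\ge-q+1+\tfrac{-p+q-1}{m+1}$, whence
\[
  (1+t)^{\,p+(q-1)k}\le(1+t)^{-(k+1)\(-q+1+\frac{-p+q-1}{m+1}\)}=(1+t)^{-\be_1(k+1)}=\varXi_1(t)^{k+1},
\]
which is the bound required by \eqref{Cm} with $\varXi=\varXi_1$ (equivalently, the estimate of Lemma \ref{lemm2-Ex1} with $\si_2$ in place of $\si_1$).

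I do not expect a genuine analytic obstacle. Note that $\be_1=-\tfrac{m(q-1)+p}{m+1}<0$, so the bound $(1+t)^{-\be_1(k+1)}$ in fact \emph{grows} in $t$ and the derivatives of $\si_2$ grow accordingly; the entire content of the lemma is the uniform-in-$n$ comparison $t_n\simeq 1+t$ together with the elementary exponent bookkeeping above. The single point deserving care is the global $C^m$-regularity at the junctions between oscillating and silent intervals, for which the conditions $|\chi^{(k)}(0)|=|\chi^{(k)}(1)|=0$ are exactly tailored, and which is what makes the per-interval estimate uniform across the infinitely many intervals.
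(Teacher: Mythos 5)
Your proof is correct and follows essentially the same route as the paper's: the paper likewise applies the chain-rule computation of Lemma \ref{lemm2-Ex1} on each oscillation interval to get $|\si_2^{(k)}(t)|\le t_n^{p+k(q-1)}\max_{[0,1]}|\chi^{(k)}|$, converts $t_n$ into $1+t$ via \eqref{t-simeq-hk}, and concludes with the same exponent comparison using $-p+q-1>0$ and $k\le m$ (your explicit treatment of the periodic extension and of $C^m$-matching at the junctions, enabled by $|\chi^{(k)}(0)|=|\chi^{(k)}(1)|=0$, is detail the paper leaves implicit in the definition \eqref{si2}). One peripheral remark of yours is inaccurate: $\be_1=-\frac{m(q-1)+p}{m+1}$ need not be negative under the example's hypotheses (e.g.\ $m=1$, $q=1.2$, $p=-1$, $h=0$, $r=1$ is admissible and gives $\be_1=0.4>0$, so the bound then decays rather than grows), but this side comment plays no role in your argument.
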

\begin{proof}
Let $t \in [t_n,t_{n+1})$. 
By the same way to the proof of Lemma \ref{lemm2-Ex1} and using \eqref{t-simeq-hk}, we have 
\begin{align*}
  \left|\frac{d^k}{dt^k}\si_2(t)\right|
\le
  t_n^{p+k(q-1)}\max_{\tau\in[0,1]}\left\{\left|\chi^{(k)}(\tau)\right|\right\}
\lesssim (1+t)^{p+k(q-1)}
\le (1+t)^{-\be_1(k+1)}
\end{align*}
for $k=0,\ldots,m$. 
\end{proof}

\begin{lemma}\label{lemm3-Ex2}
If $p \le p_2(q,r,h,m)$, then \eqref{XiTh-thm0} and \eqref{ass0} are vaild. 
\end{lemma}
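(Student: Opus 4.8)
The plan is to reduce the verification entirely to Example \ref{Ex0} (i), applied with the parameter choice $\al=\al_2$ and $\be=\be_1$. By \eqref{al2} the functions in play are $\Th(t)=C_1(1+t)^{-\al_2}$ and $\varXi(t)=C_1(1+t)^{-\be_1}$, and since the left-hand sides of \eqref{XiTh-thm0} and \eqref{ass0} are, respectively, invariant and merely rescaled by a positive power of $C_1$ when the constant is factored out, it suffices to establish both conditions for the pure powers $(1+t)^{-\al_2}$ and $(1+t)^{-\be_1}$. This is exactly the framework of Example \ref{Ex0} (i), which involves only $\Th$ and $\varXi$ and is independent of $\ze$ and $\nu$.

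First I would confirm the admissibility hypotheses of Example \ref{Ex0}. From \eqref{pqrh} we have $p<2q-2-(h+1)/r$, which is precisely $\al_2<0$. The chain of inequalities already displayed in the proof of Lemma \ref{lemm2-Ex1} gives $\be_1<-p\le1$ (using $p\ge-1$ and $p<q-1$ from \eqref{pqrh}), so $\be_1\le1$; and $\be_1\neq1/(m+1)$ is the standing assumption carried over from Example \ref{Ex1}. Hence $(\al_2,\be_1)$ is an admissible pair for Example \ref{Ex0}.

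The substantive step is the algebraic identity that matches the hypothesis $p\le p_2$ to the exponent condition \eqref{Ex_al-be-1}. Using the relation $\al_2=\al_1-q+(h+1)/r$ from \eqref{al2} together with the Example \ref{Ex1} identity $\al_1 m-\be_1(m+1)+1=(p+1)(m+1)$, I would compute
\[
  \al_2 m-\be_1(m+1)+1
  =(p+1)(m+1)-m\(q-\frac{h+1}{r}\).
\]
This is $\le0$ if and only if $p+1\le\frac{m}{m+1}(q-(h+1)/r)$, that is, if and only if $p\le p_2$. Thus the assumption $p\le p_2$ is exactly condition \eqref{Ex_al-be-1} for the pair $(\al,\be)=(\al_2,\be_1)$, and Example \ref{Ex0} (i) then delivers \eqref{XiTh-thm0} and \eqref{ass0}; restoring the constant $C_1$ finishes the argument.

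I do not anticipate any real obstacle: the only points requiring care are the parameter bookkeeping in the displayed identity and the two admissibility checks $\al_2<0$, $\be_1\le1$, both of which are immediate from \eqref{pqrh} and the estimate in Lemma \ref{lemm2-Ex1}. Everything else is a direct invocation of Example \ref{Ex0} (i).
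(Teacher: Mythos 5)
Your proposal is correct and follows essentially the same route as the paper: the key algebraic identity $\al_2 m-\be_1(m+1)+1=(p+1)(m+1)-m\(q-\frac{h+1}{r}\)$ and the reduction to Example \ref{Ex0} (i) with $\al=\al_2$, $\be=\be_1$ are exactly the paper's argument. Your additional explicit verification of the admissibility conditions $\al_2<0$ and $\be_1\le1$ is a harmless (and slightly more careful) elaboration of what the paper leaves implicit.
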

\begin{proof}
If $p \le p_2(q,r,h,m)$, that is, 
\[
  p + 1 - \frac{m}{m+1}\(q-\frac{h+1}{r}\) \le 0
\]
holds, then we have 
\begin{align*}
  \al_2 m - \be_1 (m+1)+ 1
&=\al_1 m - \be_1 (m+1)+ 1 - \(\al_1-\al_2\)m
\\
& =(p+1)(m+1) - \(q-\frac{h+1}{r}\)m
\\
&=\(p+1 - \frac{m}{m+1}\(q-\frac{h+1}{r}\)\)(m+1)
 \le 0. 
\end{align*}
It follows that \eqref{XiTh-thm0} and \eqref{ass0} are valid by Example \ref{Ex0} (i) with $\al=\al_2$ and $\be=\be_1$. 
\end{proof}

\begin{lemma}\label{lemm4-Ex2}
If $p \le \tp_2(q,r,h,m,\nu)$, then \eqref{XiTh-thm1} and \eqref{ass1} are vaild. 
\end{lemma}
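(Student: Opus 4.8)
The plan is to reduce the claim to Example \ref{Ex0} (ii) with the choices $\al=\al_2$ and $\be=\be_1$, exactly as Lemma \ref{lemm3-Ex1} did with $\al=\al_1$. By \eqref{al2} we have $\Th(t)=C_1(1+t)^{-\al_2}$ and $\varXi(t)=C_1(1+t)^{-\be_1}$, and since $\rho=\rho_\nu$ gives $\ze(r)\simeq(1+r)^{(\nu-1)/\nu}$, the triple $\Th$, $\varXi$, $\ze$ matches that of Example \ref{Ex0} with $\al=\al_2$, $\be=\be_1$ (the constant $C_1$ only rescales $\ze^{-1}(\Th)$ by a constant and hence is immaterial to the suprema in \eqref{XiTh-thm1} and \eqref{ass1}). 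First I would record the standing hypotheses of Example \ref{Ex0} (ii): $\al_2<0$ follows from \eqref{pqrh}, while $\be_1<1$ (shown in the proof of Lemma \ref{lemm2-Ex1}) and $\be_1\neq1/(m+1)$ (assumed in Example \ref{Ex1}) hold, and $\nu>1$ is given.

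The heart of the argument is the algebraic identity showing that $p\le\tp_2$ is precisely the condition demanded by Example \ref{Ex0} (ii), namely
\[
  \al_2 m - \be_1(m+1)+1+\frac{\al_2(m+1)}{\nu-1}\le 0.
\]
I would begin from the identity established in the proof of Lemma \ref{lemm3-Ex2},
\[
  \al_2 m - \be_1(m+1)+1=(p+1)(m+1)-\(q-\frac{h+1}{r}\)m,
\]
add $\al_2(m+1)/(\nu-1)$ with $\al_2=p-2q+2+(h+1)/r$, and separate the terms involving $Q:=q-(h+1)/r$. The pieces $(p+1)(m+1)+\al_1(m+1)/(\nu-1)$ reproduce the expression computed in Lemma \ref{lemm3-Ex1}, equal to $\tfrac{\nu(m+1)}{\nu-1}(p-\tp_1)$, while the remaining terms collapse to $-Qm-Q(m+1)/(\nu-1)=-\tfrac{(m\nu+1)Q}{\nu-1}$. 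Collecting these,
\[
  \al_2 m - \be_1(m+1)+1+\frac{\al_2(m+1)}{\nu-1}
  =\frac{\nu(m+1)}{\nu-1}\(p-\tp_2\),
\]
where the identification of the constant term with $\tp_2$ rests on $\tp_2-\tp_1=\(\tfrac{m}{m+1}+\tfrac{1}{\nu(m+1)}\)Q=\tfrac{(m\nu+1)Q}{\nu(m+1)}$. Since $\nu>1$ and $m\ge1$ force the prefactor to be positive, the displayed inequality holds exactly when $p\le\tp_2$.

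Having matched the sign condition, I would conclude directly by Example \ref{Ex0} (ii): under $p\le\tp_2$ both \eqref{XiTh-thm1} and \eqref{ass1} are valid. I expect no analytic obstacle in this lemma; the only delicate point is the bookkeeping of the parameter $Q=q-(h+1)/r$ through the weighting by $1/(\nu-1)$, and in particular the verification of the combination $\tfrac{m}{m+1}+\tfrac{1}{\nu(m+1)}=\tfrac{m\nu+1}{\nu(m+1)}$, which is what makes the coefficient of $p-\tp_2$ come out correctly.
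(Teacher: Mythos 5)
Your proposal is correct and follows essentially the same route as the paper: both reduce the claim to Example \ref{Ex0} (ii) with $\al=\al_2$, $\be=\be_1$, and verify by direct algebra that $\al_2 m-\be_1(m+1)+1+\al_2(m+1)/(\nu-1)=\frac{\nu(m+1)}{\nu-1}(p-\tp_2)\le 0$ is equivalent to $p\le\tp_2$ (the paper likewise writes $\al_2=\al_1-(q-(h+1)/r)$ and peels off the Lemma \ref{lemm3-Ex1} computation). Your bookkeeping, including the identity $\tfrac{m}{m+1}+\tfrac{1}{\nu(m+1)}=\tfrac{m\nu+1}{\nu(m+1)}$, checks out.
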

\begin{proof}
If $p \le \tp_2(q,r,h,m,\nu)$, that is, 
\[
  p + 1 - \frac{q-1}{\nu} 
  - \(\frac{m}{m+1}+\frac{1}{\nu(m+1)}\)\(q-\frac{h+1}{r}\) \le 0
\]
holds, then we have 
\begin{align*}
&\!\!\al_2 m - \be_1 (m+1) + 1 + \frac{\al_2(m+1)}{\nu-1}
\\
&=\al_1 m - \be_1 (m+1) + 1 + \frac{\al_1(m+1)}{\nu-1}
  -\(m+\frac{m+1}{\nu-1}\)(\al_1-\al_2)
\\
&=\(p+1+\frac{p-q+2}{\nu-1}\)(m+1)
  -\(m+\frac{m+1}{\nu-1}\)\(q-\frac{h+1}{r}\)
\\
&=\(\frac{(\nu-1)(p+1)}{\nu}+\frac{p-q+2}{\nu}
  -\(\frac{(\nu-1)m}{\nu(m+1)}+\frac{1}{\nu}\)\(q-\frac{h+1}{r}\)\)
   \frac{\nu(m+1)}{\nu-1}
\\
&=\(p+1-\frac{q-1}{\nu}
  -\(\frac{m}{m+1}+\frac{1}{\nu(m+1)}\)\(q-\frac{h+1}{r}\)\)
   \frac{\nu(m+1)}{\nu-1}
\\
&\le 0. 
\end{align*}
It follows that \eqref{XiTh-thm1} and \eqref{ass1} are valid by Example \ref{Ex0} (ii) with $\al=\al_2$ and $\be=\be_1$. 
\end{proof}

\subsection{Volterra integral equation}

\begin{lemma}\label{lemm-Vol} 
The solution to the Volterra integral equation 
\begin{equation}\label{Vol}
  w(t,\xi) = p(t,\xi) + \int^t_0 q(t,\tau,\xi) w(\tau,\xi)\,d\tau
\end{equation}
is formally represented as followis: 
\begin{align*}
  w(t,\xi)
 &=p(t,\xi) + \int^t_0 q(t,\tau_1,\xi)p(\tau_1,\xi)\,d\tau_1
\\
 &\;\; +\sum_{l=2}^\infty 
  \int^t_0 q(t,\tau_1,\xi)\int^{\tau_1}_0 q(\tau_1,\tau_2,\xi)\cdots
  \int^{\tau_{l-1}}_0 q(\tau_{l-1},\tau_l,\xi)p(\tau_l,\xi)\,d\tau_l
  \cdots d\tau_1. 
\end{align*}
\end{lemma}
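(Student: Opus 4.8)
The plan is to recognize the claimed expression as the Neumann series produced by Picard iteration for the Volterra equation \eqref{Vol}, and then to verify directly, at the formal level, that this series solves the equation. Since the statement asks only for a \emph{formal} representation, the entire argument reduces to a bookkeeping identity for nested integrals together with an index shift.

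First I would set up the successive approximations. Define $w^{(0)}(t,\xi):=p(t,\xi)$ and, for $n\ge1$,
\[
  w^{(n)}(t,\xi):=p(t,\xi)+\int^t_0 q(t,\tau,\xi)\,w^{(n-1)}(\tau,\xi)\,d\tau.
\]
A straightforward induction on $n$ shows that $w^{(n)}(t,\xi)$ equals the partial sum of the claimed series truncated at $l=n$: in the inductive step, each $l$-fold nested integral occurring in $w^{(n-1)}(\tau_1,\xi)$ acquires one further kernel factor under $\int^t_0 q(t,\tau_1,\xi)\,(\cdot)\,d\tau_1$, producing the $(l+1)$-fold nested integral. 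Letting $n\to\infty$ then exhibits the stated series as the formal limit of the iterates.

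The cleaner route, which I would present as the actual verification, is direct substitution. Write the right-hand side of the claimed identity as $W(t,\xi)=\sum_{l\ge0}W_l(t,\xi)$, where $W_0=p$ and $W_l$ denotes the $l$-fold nested integral for $l\ge1$. Substituting $W$ for $w$ in the right-hand side of \eqref{Vol} and interchanging summation with integration (permissible formally), each term transforms under $\int^t_0 q(t,\tau_1,\xi)\,(\cdot)\,d\tau_1$ according to the index shift $W_l\mapsto W_{l+1}$: the outer kernel $q(t,\tau_1,\xi)$ is prepended and the integration variable $\tau$ is renamed $\tau_1$, so the $l$-fold integral becomes the $(l+1)$-fold one. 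Consequently $\int^t_0 q(t,\tau,\xi)\,W(\tau,\xi)\,d\tau=\sum_{l\ge1}W_l(t,\xi)=W(t,\xi)-p(t,\xi)$, whence $p+\int^t_0 q\,W\,d\tau=W$, which is exactly \eqref{Vol}.

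The hard part here is essentially notational rather than conceptual: the only point demanding care is tracking the nested integration variables $\tau_1,\tau_2,\ldots$ and making the correspondence $l\mapsto l+1$ explicit, after which both the inductive derivation and the substitution check are routine. No convergence needs to be established at this stage, since the representation is asserted only formally and the interchange of the infinite sum with the integral is used purely formally; in the application in Lemma \ref{lemma-est_vjk}, absolute convergence is recovered \emph{a posteriori} from the factorial denominators $1/(2l)!$ appearing in \eqref{intpqj1}--\eqref{intpqj2}.
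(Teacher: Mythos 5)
Your proposal is correct and matches the paper's approach: the paper simply states that the lemma ``can be verified by direct calculations,'' and your substitution/index-shift argument (equivalently, Picard iteration) is exactly the direct calculation intended, spelled out in detail. Your closing remark that convergence is only needed a posteriori, via the factorial bounds \eqref{intpqj1}--\eqref{intpqj2} in Lemma \ref{lemma-est_vjk}, is also consistent with how the lemma is used in the paper.
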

\begin{proof}
The proof can be verified by direct calculations. 
\end{proof}


\end{document}